\numberwithin{theorem}{section}
\newcommand{\TheTitle}{Solving Nonlinear Optimal Control Problems With State and Control Delays by Shooting Methods Combined with Numerical Continuation on the Delays}
\newcommand{\TheShortTitle}{Optimal Control Problems With State and Control Delays}
\newcommand{\TheAuthors}{R. Bonalli, B. H\'{e}riss\'{e}, and E. Tr\'{e}lat}
\headers{\TheShortTitle}{\TheAuthors}
\title{{\TheTitle}
}
\author{
  Riccardo Bonalli\thanks{Sorbonne Universit\'{e}s, UPMC Univ Paris 06, CNRS UMR 7598, Laboratoire Jacques-Louis Lions, F-75005, Paris, France and Onera - The French Aerospace Lab, F - 91761 Palaiseau, France (\email{riccardo.bonalli@onera.fr})}
  \and
  Bruno H\'{e}riss\'{e}\thanks{Onera - The French Aerospace Lab, F - 91761 Palaiseau, France (\email{bruno.herisse@onera.fr}).}
  \and
  Emmanuel Tr\'{e}lat\thanks{Sorbonne Universit\'{e}s, UPMC Univ Paris 06, CNRS UMR 7598, Laboratoire Jacques-Louis Lions, F-75005, Paris, France (\email{emmanuel.trelat@upmc.fr}, \url{https://www.ljll.math.upmc.fr/trelat/}).}
}
\begin{document}

\maketitle

\begin{abstract}
In this paper we introduce a new procedure to solve nonlinear optimal control problems with delays which exploits indirect methods combined with numerical homotopy procedures. It is known that solving this kind of problems via indirect methods (which arise from the Pontryagin Maximum Principle) is complex and computationally demanding because their implementation is faced to two main difficulties: the extremal equations involve forward and backward terms, and besides, the related shooting method has to be carefully initialized. Here, starting from the solution of the non-delayed version of the optimal control problem, delays are introduced by a numerical continuation. This creates a sequence of optimal delayed solutions that converges to the desired solution. We establish a convergence theorem ensuring the continuous dependence w.r.t. the delay of the optimal state, of the optimal control (in a weak sense) and of the corresponding adjoint vector. The convergence of the adjoint vector represents the most challenging step to prove and it is crucial for the well-posedness of the proposed homotopy procedure. Two numerical examples are proposed and analyzed to show the efficiency of this approach.
\end{abstract}

\begin{keywords}
Optimal control, time-delayed systems, indirect methods, shooting methods, numerical homotopy methods, numerical continuation methods.
\end{keywords}

\begin{AMS}
49J15, 65H20.
\end{AMS}

\section{Introduction} \label{secIntro}

\subsection{Delayed Optimal Control Problems}

Let $n$, $m$ be positive integers, $\Delta$ a positive real number, $\Omega \subseteq \mathbb{R}^m$ a measurable subset and define an initial state function $\phi^1(\cdot) \in C^0([-\Delta,0],\mathbb{R}^n)$ and an initial control function $\phi^2(\cdot) \in L^{\infty}([-\Delta,0],\Omega)$. For every $\tau = (\tau^1,\tau^2) \in [0,\Delta]^2$ and every positive final time $T$, consider the following nonlinear control system on $\mathbb{R}^n$ with constant delays
\begin{equation} \label{dynDelay}
\begin{cases}
\dot{x}_{\tau}(t) = f(t,x_{\tau}(t),x_{\tau}(t-\tau^1),u_{\tau}(t),u_{\tau}(t-\tau^2)) \ , \quad t \in [0,T] \medskip \\
x_{\tau}(t) = \phi^1(t) \ , \ t \in [-\Delta,0] \quad , \quad  u_{\tau}(t) = \phi^2(t) \ , \ t \in [-\Delta,0) \medskip \\
u_{\tau}(\cdot) \in L^{\infty}([-\Delta,T],\Omega)
\end{cases}
\end{equation}
where $f(t,x,y,u,v) = f_1(t,x,y,u) + f_2(t,x,y,v)$ and $f_1 : \mathbb{R} \times \mathbb{R}^{2n} \times \mathbb{R}^{m} \rightarrow \mathbb{R}^{n}$, $f_2 : \mathbb{R} \times \mathbb{R}^{2n} \times \mathbb{R}^{m} \rightarrow \mathbb{R}^{n}$ are of class (at least) $C^2$ w.r.t. their second and third variables. Control systems (\ref{dynDelay}) play an important role describing many phenomena in physics, biology and economics (see, e.g. \cite{Malek-Zavarei:1987:TSA:576371}).

Let $M$ be a subset of $\mathbb{R}^{n}$. Assume that $M$ is reachable from $\phi^1(\cdot)$ for the control system (\ref{dynDelay}), that is, for every $\tau = (\tau^1,\tau^2) \in [0,\Delta]^2$, there exists a final time $T_{\tau}$ and a control $u_{\tau}(\cdot) \in L^{\infty}([-\Delta,T_{\tau}],\Omega)$, such that the trajectory $x_{\tau}(\cdot)$, solution of (\ref{dynDelay}) on $[0,T_{\tau}]$, satisfies $x_{\tau}(T_{\tau}) \in M$. Such a control is called \textit{admissible} and we denote by $\mathcal{U}^{\tau}_{T_{\tau},\mathbb{R}^m}$ the set of all admissible controls of (\ref{dynDelay}) defined on $[-\Delta,T_{\tau}]$ taking their values in $\mathbb{R}^m$ while $\mathcal{U}^{\tau}_{T_{\tau},\Omega}$ denotes the set of all admissible controls of (\ref{dynDelay}) defined on $[-\Delta,T_{\tau}]$ taking their values in $\Omega$. Then, $\mathcal{U}^{\tau}_{\mathbb{R}^m} := \cup_{T > 0} \mathcal{U}^{\tau}_{T,\mathbb{R}^m}$ and $\mathcal{U}^{\tau}_{\Omega} := \cup_{T > 0} \mathcal{U}^{\tau}_{T,\Omega}$.

Given a couple $\tau = (\tau^1,\tau^2) \in [0,\Delta]^2$ of delays, we consider the Optimal Control Problem with Delays (\textbf{OCP})$_{\tau}$ consisting in steering the control system (\ref{dynDelay}) to $M$, while minimizing the cost function
\begin{equation} \label{costDelay}
C_{T_{\tau}}(\tau,u_{\tau}(\cdot)) := \int_0^{T_{\tau}} f^0(t,x_{\tau}(t),x_{\tau}(t-\tau^1),u_{\tau}(t),u_{\tau}(t-\tau^2)) \; dt
\end{equation}
where $f^0(t,x,y,u,v) = f^0_1(t,x,y,u) + f^0_2(t,x,y,v)$ and $f^0_1 : \mathbb{R} \times \mathbb{R}^{2n} \times \mathbb{R}^{m} \rightarrow \mathbb{R}$, $f^0_2 : \mathbb{R} \times \mathbb{R}^{2n} \times \mathbb{R}^{m} \rightarrow \mathbb{R}$ are of class (at least) $C^2$ w.r.t. their second and third variables. The final time $T_{\tau}$ may be fixed or not.

The literature is abundant of numerical methods to solve (\textbf{OCP})$_{\tau}$. Most of them rely on \textit{direct methods}, which basically consist in discretizing all the variables concerned and in reducing (\textbf{OCP})$_{\tau}$ to a finite dimensional problem. The works \cite{wong1985optimal, lee1993numerical, chen2000numerical, lee2006semi, gollmann2009optimal} develop several numerical techniques to convert the optimal control problem with delays into nonlinear constrained optimization problems. On the other hand, \cite{horng1985analysis, hwang1985optimal, perng1986direct, khellat2009optimal, haddadi2012optimal} propose different approaches that approximate the solution of (\textbf{OCP})$_{\tau}$ by truncated orthogonal series and reduce the optimal control problem with delays to a system of algebraic equations. Yet, other contributions (see, e.g. \cite{banks1979approximation}) propose an approximating sequence of non-delayed optimal control problems whose solutions converge to the optimal solution of (\textbf{OCP})$_{\tau}$. However, the dimension induced by these approaches becomes as larger as the discretization is finer.

Some applications, many of which are found in aerospace engineering, like atmospheric reentry and satellite launching (for example in \cite{trelat2012optimal}), require great accuracy which can be reached by \textit{indirect methods}. Moreover, the computational load needed to compute indirect methods remains minimal if compared to the one used to obtain a good solution with direct approaches. It is then interesting to solve efficiently (\textbf{OCP})$_{\tau}$ via these procedures.

\subsection{Indirect Methods Applied to (\textbf{OCP})$_{\tau}$}

The core of indirect methods relies on solving, thanks to Newton-like algorithms, the two-point or multi-point boundary value problem which arises from necessary optimality conditions coming from the application of the \textit{Pontryagin Maximum Principle} (PMP) \cite{pontryagin1987mathematical}.

The paper \cite{kharatishvili1961maximum} was first to provide a maximum principle for optimal control problems with a constant state delay while \cite{guinn1976reduction} obtains the same conditions by a simple substitution-like method. In \cite{kharatishvili1967maximum} a similar result is achieved for control problems with pure control delays. In \cite{halanay1968optimal, soliman1972optimal}, necessary conditions are obtained for optimal control problems with multiple constant delays in state and control variables. Moreover, \cite{banks1968necessary} derives a maximum principle for control systems with a time-dependent delay in the state variable. Finally, \cite{gollmann2014theory} provides necessary conditions for optimal control problems with multiple constant delays and mixed control-state constraints.

The advantages of indirect methods, whose more basic version is known as \textit{shooting method}, are their extremely good numerical accuracy and the fact that, if they converge, the convergence is very quick. Indeed, since they rely on the Newton method, they inherit the convergence properties of the Newton method. Nevertheless, their main drawback is related to their difficult \textit{initialization} (see for example \cite{trelat2012optimal}). This is pointed out as soon as the necessary optimality conditions are computed on (\textbf{OCP})$_{\tau}$.

It is known that (see, e.g. \cite{pontryagin1987mathematical,boccia2013free,gollmann2014theory}), if $(x_{\tau}(\cdot),u_{\tau}(\cdot))$, $\tau = (\tau^1,\tau^2) \in [0,\Delta]^2$ is an optimal solution of (\textbf{OCP})$_{\tau}$ with optimal final time $T_{\tau}$, there exist a nonpositive scalar $p^0_{\tau}$ and an absolutely continuous mapping $p_{\tau} : [0,T_{\tau}] \rightarrow \mathbb{R}^n$ called \textit{adjoint vector}, with $(p_{\tau}(\cdot),p^0_{\tau}) \neq (0,0)$, such that the so-called \textit{extremal} $(x_{\tau}(\cdot),p_{\tau}(\cdot),p^0_{\tau},u_{\tau}(\cdot))$ satisfies
\begin{eqnarray} \label{dynDual}
\hspace{15pt}
\begin{cases}
\displaystyle \dot{x}_{\tau}(t) = \displaystyle \frac{\partial H}{\partial p}(t,x_{\tau}(t),x_{\tau}(t-\tau^1),p_{\tau}(t),p^0_{\tau},u_{\tau}(t),u_{\tau}(t-\tau^2)) \ , \ t \in [0,T_{\tau}] \medskip \\
\displaystyle \dot{p}_{\tau}(t) = \displaystyle -\frac{\partial H}{\partial x}(t,x_{\tau}(t),x_{\tau}(t-\tau^1),p_{\tau}(t),p^0_{\tau},u_{\tau}(t),u_{\tau}(t-\tau^2)) \medskip \\
\displaystyle \hspace{14pt} -\frac{\partial H}{\partial y}(t+\tau^1,x_{\tau}(t+\tau^1),x_{\tau}(t),p_{\tau}(t+\tau^1),p^0_{\tau},u_{\tau}(t+\tau^1),u_{\tau}(t+\tau^1-\tau^2)) \ , \medskip \\
\hspace{14pt} t \in [0,T_{\tau}-\tau^1] \medskip \\
\displaystyle \dot{p}_{\tau}(t) = \displaystyle -\frac{\partial H}{\partial x}(t,x_{\tau}(t),x_{\tau}(t-\tau^1),p_{\tau}(t),p^0_{\tau},u_{\tau}(t),u_{\tau}(t-\tau^2)) \ , \medskip \\
\hspace{14pt} t \in (T_{\tau}-\tau^1, T_{\tau}]
\end{cases}
\end{eqnarray}
where $H(t,x,y,p,p^0,u,v) = \langle p , f(t,x,y,u,v) \rangle + p^0 f^0(t,x,y,u,v)$ is the Hamiltonian, and the maximization condition
\begin{eqnarray} \label{maxCond}
\footnotesize
\displaystyle \hspace{12pt} &H&(t,x_{\tau}(t),x_{\tau}(t-\tau^1),p_{\tau}(t),p^0_{\tau},u_{\tau}(t),u_{\tau}(t-\tau^2)) \medskip \\
\displaystyle &+& \mathds{1}_{[0,T_{\tau}-\tau^2]} H(t+\tau^2,x_{\tau}(t+\tau^2),x_{\tau}(t+\tau^2-\tau^1),p_{\tau}(t+\tau^2),p^0_{\tau},u_{\tau}(t+\tau^2),u_{\tau}(t)) \nonumber \medskip \\
\displaystyle &\geq& H(t,x_{\tau}(t),x_{\tau}(t-\tau^1),p_{\tau}(t),p^0_{\tau},v,u_{\tau}(t-\tau^2)) \nonumber \medskip \\
\displaystyle &+& \mathds{1}_{[0,T_{\tau}-\tau^2]} H(t+\tau^2,x_{\tau}(t+\tau^2),x_{\tau}(t+\tau^2-\tau^1),p_{\tau}(t+\tau^2),p^0_{\tau},u_{\tau}(t+\tau^2),v) \nonumber \medskip \\
&\forall& v \in \Omega \nonumber
\end{eqnarray}
holds almost everywhere on $[0,T_{\tau}]$. Moreover, if the final time is free and, without loss of generality, one supposes that $T_{\tau}$ and $T_{\tau} - \tau$ are points of continuity of $u_{\tau}(\cdot)$,
\begin{equation} \label{freeT}
H(T_{\tau},x_{\tau}(T_{\tau}),x_{\tau}(T_{\tau}-\tau^1),p_{\tau}(T_{\tau}),p^0_{\tau},u_{\tau}(T_{\tau}),u_{\tau}(T_{\tau}-\tau^2)) = 0
\end{equation}
(see \cite{boccia2013free} for a more general condition using the concept of Lebesgue approximate continuity). The extremal $(x_{\tau}(\cdot),p_{\tau}(\cdot),p^0_{\tau},u_{\tau}(\cdot))$ is said \textit{normal} whenever $p^0_{\tau} \neq 0$, and in that case it is usual to normalize the adjoint vector so that $p^0_{\tau} = -1$; otherwise it is said \textit{abnormal}.

Assuming that $u_{\tau}(\cdot)$ is known as a function of $x_{\tau}(\cdot)$ and $p_{\tau}(\cdot)$ (by the maximization condition (\ref{maxCond})), each iteration of a shooting method consists in solving the coupled dynamics (\ref{dynDual}), where a value of $p_{\tau}(T_{\tau})$ is provided. This means that one has to solve a \textit{Differential-Difference Boundary Value Problem} (DDBVP) where both forward and backward terms of time appear within \textit{mixed type differential equations}. The difficulty is then the lack of global information which forbids a purely local integration by usual iterative methods for ODEs. Some techniques to solve mixed type differential equations were developed. For example, \cite{mallet2003mixed} proposes an analytical decomposition of the solutions as sums of \textit{forward solutions} and \textit{backward solutions}, while \cite{ford2010numerical} provides a solving numerical scheme. However, these approaches treat either only linear cases or the inversion of matrices whose dimension increases as much as the numerical accuracy raises.

In order to initialize correctly a shooting method for (\ref{dynDual}), a guess of the final value of the adjoint vector $p_{\tau}(T_{\tau})$ is not sufficient, but rather, a good numerical guess of the whole function $p_{\tau}(\cdot)$ must be provided to make the procedure converge. This represents an additional difficulty with respect to the usual shooting method and it requires a global discretization of (\ref{dynDual}).

It seems that this topic has been little addressed in the literature. The paper \cite{bader1985solving} proposes a collocation methods to solve the DDBVP arising from (\ref{dynDual}) that turns out to be successful to solve several optimal control problems with delays. However, as a consequence of the collocation method, the degree of interpolating polynomials grows up fast for hard problems. Moreover, a precomputation of points where the solution of (\ref{dynDual}) has discontinuous derivative is needed to make the whole approach feasible, intensifying the quantity of numerical computations.

\subsection{Numerical Homotopy Approach}

The basic idea of homotopy methods is to solve a difficult problem step by step, starting from a simpler problem, by parameter deformation. Theory and practice of continuation methods are well known (see, e.g. \cite{allgower2003introduction}). Combined with the shooting problem derived from the PMP, a homotopy method consists in deforming the problem into a simpler one (that can be easily solved) and then in solving a series of shooting problems step by step to come back to the original problem. The main difficulty of homotopy methods lies in the choice of a sufficiently regular deformation that allows the convergence of the homotopy method. The starting problem should be easy to solve, and the path between this starting problem and the original problem should be handy to model. This path is parametrized by a parameter denoted $\lambda$ and, when the homotopic parameter $\lambda$ is a real number and the path is linear in $\lambda$, the homotopy method is rather called a \textit{continuation method}.

Consider the Optimal Control Problem \textit{without} Delays (\textbf{OCP})$\equiv$(\textbf{OCP})$_0$ which consists of steering to $M$ the control system
\begin{equation} \label{dynZero}
\begin{cases}
\dot{x}(t) = f(t,x(t),x(t),u(t),u(t)) \ , \quad t \in [0,T] \medskip \\
x(t) = \phi^1(0) \ , \quad u(\cdot) \in L^{\infty}([0,T],\Omega)
\end{cases}
\end{equation}
while minimizing the cost function
\begin{equation} \label{costZero}
C_{T}(u(\cdot)) := C_{T}(0,u(\cdot)) = \int_0^{T} f^0(t,x(t),x(t),u(t),u(t)) \; dt \quad .
\end{equation}

In many situations, exploiting the non-delayed version of the PMP mixed to other techniques (such as geometric control, dynamical system theory applied to mission design, etc., we refer the reader to \cite{trelat2012optimal} for a survey on these procedures), one is able to initialize efficiently a shooting method on (\textbf{OCP}). Thus, it is legitimate to wonder if one may solve (\textbf{OCP})$_{\tau}$ by indirect methods starting a homotopy method where $\tau$ represents the deformation parameter and (\textbf{OCP}) is taken as the starting problem. This approach is a way to address the flaw of indirect methods applied to (\textbf{OCP})$_{\tau}$: on one hand, the global adjoint vector of (\textbf{OCP}) could be used to inizialize efficiently a shooting method on (\ref{dynDual}) (for $\| \tau \|$ small enough) and, on the other hand, we could solve (\ref{dynDual}) via usual iterative methods for ODEs (for example, by using the global state solution at the previous iteration).

However, one should be careful when using homotopy methods. As we pointed out previously, the existence of a sufficiently regular deformation curve of delays $\tau$ that allows the convergence of the method must be ensured. In \cite{trelat2012optimal}, it was proved that, in the case of unconstrained optimal control problems without delays where $M = \{ x_1 \}$, the existence of a parameter deformation curve is equivalent to ask that neither abnormal minimizers nor conjugate points occur along the homotopy path. Some similar assumptions must be made to apply this procedure to solve succesfully (\textbf{OCP})$_{\tau}$ by indirect methods.

\subsection{Main Contribution and Paper Structure}

The idea proposed in this paper consists in introducing a general method that allows to solve successfully (\textbf{OCP})$_{\tau}$ using indirect methods combined with homotopy procedures, with $\tau$ as deformation parameter, starting from the solution of its non-delayed version (\textbf{OCP}).

The main contribution of the paper is a convergence theorem that ensures the continuous dependence w.r.t. the delay of the optimal state, the optimal control (in a weak sense) and of the corresponding adjoint vector of (\textbf{OCP})$_{\tau}$. This ensures to reach the optimal solution of (\textbf{OCP})$_{\tau}$ starting from the optimal solution of (\textbf{OCP}) iteratively by travelling across a sequence $(\tau_k)_{\mathbb{N}}$ converging to $\tau$. The most challenging and most important nontrivial conclusion is the continuous dependence of the adjoint vectors of (\textbf{OCP})$_{\tau}$ w.r.t. $\tau$. This last fact is crucial because it allows indirect methods to solve (\textbf{OCP})$_{\tau}$ starting a homotopy method on $\tau$ with (\textbf{OCP}) as initial problem.

The article is structured as follows. Section 2 presents the assumptions and the statement of the convergence theorem; moreover, a practical algorithm to solve (\textbf{OCP})$_{\tau}$ by homotopy is provided. In Section 3 the efficiency of this approach is illustrated by testing the proposed algorithm on two examples. Finally, in the appendices, the technical details of the proof of the main result are provided.

\section{Convergence Theorem for (\textbf{OCP})$_{\tau}$} \label{sectionResult}

Within the proposed convergence result, it is crucial to split the case in which the delay $\tau^2$ on the control variable appears from the one which considers only pure state delays. The context of control delays reveals to be more complex, especially, in proving the existence of optimal control for (\textbf{OCP})$_{\tau}$. Indeed, a standard approach to prove existence would consider usual Filippov's assumptions (as in the classical reference \cite{filippov1962certain}) which, in the case of control delays, must be extended. In particular, using the \textit{Guinn's reduction} (see, e.g. \cite{guinn1976reduction}), the control system with delays results to be equivalent to a non-delayed system with a larger number of variables depending on the value of $\tau^2$. Such extension was used in \cite{nababan1979filippov}. However, it is not difficult to see that the usual assumption concerning the convexity of the epigraph of the extended dynamics is not sufficient to prove Lemma 2.1 in \cite{nababan1979filippov}. More details are provided in Remark \ref{remarkGuinn}, in Section \ref{existenceSect}.

\subsection{Main Result} \label{subTheo}

We make the following assumptions.

\textbf{Common assumptions:}
\begin{enumerate}
\item[$(A_1)$] $\Omega$ is a compact and convex subset of $\mathbb{R}^m$ and $M$ is a compact subset of $\mathbb{R}^n$;
\item[$(A_2)$] \begingroup \small(\textbf{OCP})\endgroup has a unique solution \begingroup \small$(x(\cdot),u(\cdot))$\endgroup defined on a neighborhood of \begingroup \small$[-\Delta,T]$\endgroup;
\item[$(A_3)$] The optimal trajectory $x(\cdot)$ has a unique extremal lift (up to a multiplicative scalar) defined on $[0,T]$, which is normal, denoted $(x(\cdot),p(\cdot),-1,u(\cdot))$, solution of the Maximum Principle;
\item[$(A_4)$] There exists a positive real number $b$ such that, for every $\tau = (\tau^1,\tau^2) \in [0,\Delta]^2$ and every $v(\cdot) \in \mathcal{U}^\tau_{\Omega}$, denoting $x_{\tau,v}(\cdot)$ the related trajectory arising from the control system (\ref{dynDelay}) with final time $T_{\tau,v}$, we have
$$
\forall t \in [-\Delta,T_{\tau,v}] : \quad  T + T_{\tau,v} + \| x_{\tau,v}(t) \| \leq b \quad .
$$
\end{enumerate}

\textbf{In case of pure state delays:}
\begin{enumerate}
\item[$(B_1)$] For every delay $\tau$, every optimal control $u_{\tau}(\cdot)$ of (\textbf{OCP})$_{\tau}$ is continuous;
\item[$(B_2)$] The sets
\begingroup
\small
$$
\bigg\{ \left(f_1(t,x,y,u),f^0_1(t,x,y,u)+\gamma,\frac{\partial \tilde f_1}{\partial x}(t,x,y,u),\frac{\partial \tilde f_1}{\partial y}(t,x,y,u)\right) : u \in \Omega \ , \ \gamma \geq 0 \bigg\}
$$
\endgroup
are convex for every $t \in \mathbb{R}$ and every $(x,y) \in \mathbb{R}^{2n}$, where $\tilde f_1(t,x,y,u) = (f_1(t,x,y,u),f^0_1(t,x,y,u))$.
\end{enumerate}

\textbf{In case of delays both in state and control variables:}
\begin{enumerate}
\item[$(C_1)$] For every delay $\tau$, every optimal control $u_{\tau}(\cdot)$ of (\textbf{OCP})$_{\tau}$ takes its values at extremal points of $\Omega$. Moreover, the optimal final time $T_{\tau}$ and $T_{\tau} - \tau$ are points of continuity of $u_{\tau}(\cdot)$;
\item[$(C_2)$] The vector field $f$ and the cost function $f^0$ are \textit{locally Lipschitz} w.r.t. $(u,v)$ i.e., for every $(t,x,y,u_0,v_0) \in \mathbb{R}^5$ there exist a neighborhood $W$ of $(u_0,v_0)$ and a continuous function $\alpha(t,x,y)$, such that
$$
\| f(t,x,y,u_1,v_1) - f(t,x,y,u_2,v_2) \| \le \alpha(t,x,y) \Big( \| u_1 - u_2 \| + \| v_1 - v_2 \| \Big)
$$
for every $(u_1,v_1) \; , \; (u_2,v_2) \in W$ (the same statement holds for $f^0$);
\item[$(C_3)$] The sets
$$
\Big\{ (f(t,x,y,u,v),f^0(t,x,y,u,v)+\gamma) : u , v \in \Omega \ , \ \gamma \geq 0 \Big\}
$$
$$
\Big\{ (f(t,x,y,u,u),f^0(t,x,y,u,u)+\gamma) : u \in \Omega \ , \ \gamma \geq 0 \Big\}
$$
are convex for every $t \in \mathbb{R}$ and every $(x,y) \in \mathbb{R}^{2n}$.
\end{enumerate}

Some remarks on these assumptions are in order.

First of all, assumptions $(A_2)$ and $(A_3)$ on the uniqueness of the solution of (\textbf{OCP}) and on the uniqueness of its extremal lift are related to the differentiability properties of the value function (see, e.g. \cite{clarke1987relationship,aubin2009set,emmRifford}). They are standard in optimization and are just made to keep a nice statement (see Theorem \ref{theoMain}). These assumptions can be weakened as follows. If we replace $(A_2)$ and $(A_3)$ with the assumption "every extremal lift of every solution of (\textbf{OCP}) is normal", then the conclusion provided in Theorem \ref{theoMain} hereafter still holds, except that the convergence properties must be written in terms of closure points. The proof of this fact follows the same guideline used to prove Theorem \ref{theoMain} and we avoid to report the details.

Assumptions $(B_1)$ and $(C_1)$ play a complementary role in proving the convergence property for the adjoint vectors. Moreover, Assumption $(C_1)$ becomes also crucial to ensure the convergence of optimal controls and trajectories when considering delays both in state and control variables. Without this assumption of nonsingular controls, proving these last convergences becomes a hard task. The issue is related to the following fact. Let $X$, $Y$ be Banach spaces and $F : X \rightarrow Y$ be a continuous map. Suppose that $(x_k)_{k \in \mathbb{N}} \subseteq X$ is a sequence such that $x_k \rightharpoonup x$ and $F(x_k) \rightharpoonup F(\bar x)$ for some $x, \bar x \in X$. Then, in general, we cannot ensure that $x \equiv \bar x$. A way to overcome this flaw is to ensure the equivalence between weak convergence and strong convergence under some additional assumptions, and, in our main result, this is achieved thanks to $(C_1)$ (see, e.g. \cite{visintin1984strong}).

\begin{theorem} \label{theoMain}
Assume that assumptions $(A_1)$, $(A_2)$, $(A_3)$ and $(A_4)$ hold.

Consider first the context of pure state delays i.e., problems (\textbf{OCP})$_{\tau}$ such that $\tau = (\tau^1,0)$ and $f_2(t,x,y,u) = f^0_2(t,x,y,u) = 0$, and assume that assumptions $(B_1)$ and $(B_2)$ hold. Then, there exists $\tau_0 > 0$ such that, for every $\tau = (\tau^1,0) \in (0,\tau_0) \times \{ 0 \}$, (\textbf{OCP})$_{\tau}$ has at least one solution $(x_{\tau}(\cdot),u_{\tau}(\cdot))$ whose arc is defined on $[-\Delta,T_{\tau}]$, every extremal lift of which is normal. Let $(x_{\tau}(\cdot),p_{\tau}(\cdot),-1,u_{\tau}(\cdot))$ be such a normal extremal lift. Then, up to continuous extensions on $[-\Delta,T]$, as $\tau$ tends to 0,
\begin{itemize}
\item $T_{\tau}$ converges to $T$;
\item $x_{\tau}(\cdot)$ converges uniformly to $x(\cdot)$;
\item $p_{\tau}(\cdot)$ converges uniformly to $p(\cdot)$;
\item $\dot{x}_{\tau}(\cdot)$ converges to $\dot{x}(\cdot)$ in $L^{\infty}$ for the weak star topology.
\end{itemize}
If the final time $T$ of (\textbf{OCP}) is fixed, then $T_{\tau} = T$ for every $\tau \in (0,\tau_0)^2$.

On the other hand, consider general problems (\textbf{OCP})$_{\tau}$ with delays $\tau = (\tau^1,\tau^2)$ in both state and control variables. If one assumes that, for every $\tau \in [0,\Delta]^2$, if (\textbf{OCP})$_{\tau}$ is controllable then it admits an optimal solution, then, under assumptions $(C_1)$, $(C_2)$ and $(C_3)$, there exists $\tau_0 > 0$ such that, for every $\tau = (\tau^1,\tau^2) \in (0,\tau_0)^2$, the same conclusions on the convergences given above hold and, in addition, as $\tau$ tends to 0, $u_{\tau}(\cdot)$ converges to $u(\cdot)$ almost everywhere in $[0,T]$. Moreover, if dynamics $f$ and cost $f^0$ are affine w.r.t. in the two control variables, the existence of an optimal solution $(x_{\tau}(\cdot),u_{\tau}(\cdot))$ for every $\tau = (\tau^1,\tau^2) \in (0,\tau_0)^2$ is ensured.

Finally, for every $\bar \tau \in [-\Delta,0]$, by extending to the delay $\bar \tau$ all the previous assumptions, we have that the optimal solutions $(x_{\tau}(\cdot),u_{\tau}(\cdot))$ of (\textbf{OCP})$_{\tau}$ (or $(x_{\tau}(\cdot),\dot{x}_{\tau}(\cdot))$ in the case of pure state delays) and their related adjoint vectors $p_{\tau}(\cdot)$ are continuous w.r.t. $\tau$ at $\bar \tau$ for the above topologies.
\end{theorem}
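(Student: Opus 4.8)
The plan is to reduce the full statement to two pieces: first the convergence of the delayed extremals to the non-delayed one as $\tau \to 0$, and then the continuity at an arbitrary $\bar\tau \in [-\Delta,0]$, which will follow by the same argument performed around $\bar\tau$ instead of around $0$. For the core convergence, I would proceed in the following order. Step 1 (a priori bounds and existence): using $(A_1)$ and $(A_4)$ together with the Filippov-type convexity hypotheses ($(B_2)$ for pure state delays, $(C_3)$ in the general case, or affineness of $f,f^0$ as the fallback), establish that, for $\|\tau\|$ small enough, (\textbf{OCP})$_\tau$ admits an optimal solution $(x_\tau(\cdot),u_\tau(\cdot))$ with final time $T_\tau$, and that the family $\{T_\tau\}$, $\{x_\tau(\cdot)\}$, and $\{\dot x_\tau(\cdot)\}$ is bounded (the latter in $L^\infty$), hence $\{x_\tau(\cdot)\}$ is equi-Lipschitz. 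The subtlety flagged in Remark \ref{remarkGuinn}, that the naive convexity of the extended Guinn dynamics is not enough, is dealt with here by invoking precisely the correctly formulated conditions $(C_1)$--$(C_3)$. Step 2 (passing to the limit along subsequences): by Arzelà–Ascoli extract a subsequence with $T_{\tau_k} \to T_*$, $x_{\tau_k}(\cdot) \to x_*(\cdot)$ uniformly (after continuous extension to $[-\Delta,T]$), and $\dot x_{\tau_k}(\cdot) \rightharpoonup^* w(\cdot)$ weak-$*$ in $L^\infty$; one checks $w = \dot x_*$ and, using the convexity hypotheses (a Mazur/closure-of-velocity-sets argument), that $(x_*(\cdot),u_*(\cdot))$ is admissible for (\textbf{OCP}) — here $x_{\tau_k}(t-\tau^1_k)\to x_*(t)$ uniformly because of equi-Lipschitzness, so the delayed dynamics (\ref{dynDelay}) passes to the limit to (\ref{dynZero}). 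One also shows $T_* = T$ or, if $T$ is fixed, trivially $T_\tau = T$.

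Step 3 (optimality of the limit): show $C_{T_*}(0,u_*(\cdot)) \le \liminf_k C_{T_{\tau_k}}(\tau_k,u_{\tau_k}(\cdot))$ by lower semicontinuity under the convexity assumptions, and conversely that $\limsup_k C_{T_{\tau_k}}(\tau_k,u_{\tau_k}(\cdot)) \le C_T(0,u(\cdot))$ by using the optimal control $u(\cdot)$ of (\textbf{OCP}) (suitably truncated/extended over $[-\Delta,0]$ via $\phi^1,\phi^2$) as a comparison admissible control for (\textbf{OCP})$_{\tau_k}$ — its cost and endpoint converge to those for (\textbf{OCP}) as $\tau_k\to0$ by continuity of $f,f^0$ and the a priori bounds. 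Combining, $(x_*(\cdot),u_*(\cdot))$ is optimal for (\textbf{OCP}), so by the uniqueness assumption $(A_2)$ it equals $(x(\cdot),u(\cdot))$; since every convergent subsequence has the same limit, the whole family converges. In the state-and-control-delay case one additionally upgrades the weak convergence of $u_{\tau_k}$ to a.e. convergence: by $(C_1)$ the controls take values at extreme points of $\Omega$, so weak convergence together with the compactness/convexity of $\Omega$ forces strong convergence (the Visintin-type argument alluded to after the assumptions), hence a.e. convergence along a further subsequence, and then everywhere by uniqueness of the limit.

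Step 4 (convergence of the adjoint vectors — the hard part): this is where the real work lies. From the PMP system (\ref{dynDual}) and the maximization condition (\ref{maxCond}), I would first get a uniform bound on $p_{\tau_k}(\cdot)$ and on $\dot p_{\tau_k}(\cdot)$: the equations for $\dot p_\tau$ involve $\partial H/\partial x$ and $\partial H/\partial y$ evaluated at advanced arguments $t+\tau^1$, but the right-hand sides are bounded on the compact range guaranteed by $(A_4)$; normality (that every extremal lift is normal, inherited for small $\tau$ from $(A_3)$ by the same limiting reasoning, so we may fix $p^0_\tau = -1$) prevents the degenerate scaling. Then Arzelà–Ascoli gives $p_{\tau_k}(\cdot)\to p_*(\cdot)$ uniformly and $\dot p_{\tau_k}(\cdot)\rightharpoonup^* \dot p_*(\cdot)$; the delicate point is that in the adjoint equation the backward-advanced terms $x_{\tau_k}(t+\tau^1_k), u_{\tau_k}(t+\tau^1_k)$ and the indicator $\mathds{1}_{[0,T_{\tau_k}-\tau^1_k]}$ must be shown to converge to their $\tau=0$ counterparts — uniform convergence of $x_{\tau_k}$, a.e. convergence of $u_{\tau_k}$ (or continuity via $(B_1)$ in the state-delay case), and $T_{\tau_k}-\tau^1_k\to T$ handle this, so $p_*$ satisfies the non-delayed adjoint equation, and passing to the limit in (\ref{maxCond}) shows $(x_*,p_*,-1,u_*)$ satisfies the non-delayed maximization condition (here again $(B_1)$ resp.\ $(C_1)$ are what let the limit of the maximization condition stay a genuine pointwise maximization rather than a weak inequality). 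Thus $(x(\cdot),p_*(\cdot),-1,u(\cdot))$ is an extremal lift of the optimal trajectory of (\textbf{OCP}), and by the uniqueness-of-extremal-lift assumption $(A_3)$ we get $p_* = p$; again the limit being independent of the subsequence yields convergence of the whole family. Finally, the continuity at $\bar\tau\in[-\Delta,0]$ is obtained by rerunning Steps 1--4 with $\bar\tau$ in place of $0$ throughout — the reference problem (\textbf{OCP})$_{\bar\tau}$ plays the role of (\textbf{OCP}), its assumed uniqueness of optimal solution and of extremal lift (the "extending to $\bar\tau$ all the previous assumptions" hypothesis) supplies the same rigidity, and the convergences follow verbatim. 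The main obstacle, as the authors themselves stress, is Step 4: controlling the advanced-argument terms in the mixed-type adjoint system and, above all, ensuring that the pointwise maximization condition survives the limit, which is exactly why the nonsingularity/extreme-point hypotheses $(B_1)$ and $(C_1)$ are imposed.
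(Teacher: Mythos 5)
Your skeleton (existence, compactness, identification of the limit by uniqueness, then the adjoint) matches the paper's, but two steps conceal genuine gaps. First, in your Step 3 you bound $\limsup_k C_{T_{\tau_k}}(\tau_k,u_{\tau_k})$ by ``using the optimal control $u(\cdot)$ of (\textbf{OCP}) as a comparison admissible control for (\textbf{OCP})$_{\tau_k}$''. This fails as stated: plugging $u(\cdot)$ into the delayed system produces a trajectory whose endpoint only \emph{converges} to $x(T)\in M$ but need not lie in $M$, so it is not admissible; indeed even the controllability of (\textbf{OCP})$_{\tau}$ (existence of \emph{any} admissible control) is nontrivial. The paper resolves this with a dedicated controllability argument: it uses $(A_3)$ via Lemma \ref{emmanuelPlane} to get $K^0_1(T)=\mathbb{R}^n$, builds needle-like variations of $u(\cdot)$, and applies a parametric conic implicit function theorem (Lemma \ref{iFTP}) to correct the endpoint for each small $\tau$, simultaneously producing the comparison controls $(S_k,w_k(\cdot),y_k(\cdot))$ converging to $(T,u(\cdot),x(\cdot))$ that make your $\limsup$ inequality legitimate. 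Without some surjectivity/endpoint-correction device of this kind, your optimality-of-the-limit argument does not close.

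Second, in Step 4 you assert that normality for small $\tau$ and the boundedness of $p_{\tau}(T_{\tau})$ follow ``by the same limiting reasoning'' from $(A_3)$, and that one can pass to the limit in the pointwise maximization condition (\ref{maxCond}). The difficulty is that in the pure-state-delay case you have only weak-$*$ convergence of $\dot x_{\tau}$ and no a.e.\ convergence of $u_{\tau}$, so the pointwise maximization condition does not survive the limit, and the ``limiting reasoning'' that is supposed to exclude abnormal lifts and unbounded multipliers has nothing to act on. The paper instead proves convergence of the Pontryagin cones (Lemma \ref{prop1}, built on Lemma \ref{lemma1} about converging Lebesgue points and on the weak-$*$ convergence (\ref{derDynConv}) of the \emph{linearized} dynamics, which is exactly why $(B_2)$ convexifies the velocity set augmented with $\partial\tilde f_1/\partial x$ and $\partial\tilde f_1/\partial y$); the multiplier inequalities $\langle(\psi_{\tau_k},p^0_{\tau_k}),\tilde v_{\tau_k}\rangle\le 0$ then pass to the limit on every vector of $\tilde K^0(T)$, which is what actually contradicts $(A_3)$ if abnormality or blow-up occurred, and what identifies the limit $z(\cdot)$ with $p(\cdot)$. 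You should either adopt this cone-convergence route or supply an alternative mechanism for (i) inheriting normality, (ii) bounding the terminal adjoint, and (iii) identifying the limiting multiplier, none of which follows from the adjoint ODE and Arzel\`a--Ascoli alone.
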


The proof of Theorem \ref{theoMain} is technical and lenghty. We report it in Appendix \ref{appProof}.

The last statement of Theorem \ref{theoMain} (the continuous dependence w.r.t. $\tau$, for every $\tau \in [-\Delta,0]$) is the most general conclusion achieved and extends the first part of the theorem. The proof of this generalization follows the same guidelines of the proof of the continuity at $\tau = 0$ and we avoid to report the details.

We want to stress the fact that the continuous dependence w.r.t. $\tau$ of the adjoint vectors of (\textbf{OCP})$_{\tau}$ represents the most challenging and the most important result achieved by Theorem \ref{theoMain}. It represents the essential step that allows the proposed homotopy method to converge robustly for every, small enough, couple of delays $\tau$. The proof of this fact is not easy. An accurate analysis of the convergence of Pontryagin cones in the case of the delayed version of the PMP is required.

\begin{remark} \label{remarkExample}
Theorem \ref{theoMain} can be extended to obtain stronger convergence conclusions, by using weaker assumptions, in the particular case of dynamics $f$ that are affine in the two control variables, and costs of type
$$
\int_0^{T_{\tau}} \Big[ C_1 \| x_{\tau}(t) \|^2 + C_2 \| x_{\tau}(t-\tau^1) \|^2 + C_3 \| u_{\tau}(t) \|^2 + C_4 \| u_{\tau}(t-\tau^2) \|^2 \Big] \; dt
$$
where $C_1, C_2, C_3, C_4 \ge 0$ are constants. Indeed, considering assumptions $(A_1)$-$(A_4)$ and either $(B_1)$ or $(C_1)$, the convergence properties established in Theorem \ref{theoMain} for $x_{\tau}(\cdot)$ and $p_{\tau}(\cdot)$ still hold and, moreover, $u_{\tau}(\cdot)$ converges to $u(\cdot)$ in $L^2$ for the weak topology, as $\tau$ tends to 0. The proof of this fact arises easily adapting the scheme in Appendix \ref{appProof}. For sake of brevity, we do not give these technical details.
\end{remark}

\subsection{The Related Algorithm and Its Convergence} \label{subSectConvAlgo}

Exploiting the statement of Theorem \ref{theoMain}, we may conceive a general algorithm, based on indirect methods, capable of solving (\textbf{OCP})$_{\tau}$ by applying homotopy procedures on parameter $\tau$, starting from the solution of its non-delayed version (\textbf{OCP}).

As we explained in the previous sections, the critical behavior coming out from this approach consists of the integration of mixed-type equations that arise from System (\ref{dynDual}). The previous convergence result suggests us the idea that we may solve (\ref{dynDual}) via usual iterative methods for ODEs, for example, by using the global state solution at the previous iteration. Moreover, the global adjoint vector of (\textbf{OCP}) could be used to inizialize, from the beginning, the whole shooting.

\begingroup
\begin{algorithm}
\caption{- Indirect Numerical Homotopy Algorithm for solving (\textbf{OCP})$_{\tau}$}
\label{alg}
\begin{algorithmic}
\vspace{5pt}
\STATE{Set $k=0$, $\tau_k = 0$, and fix an integer $k_{\max}$. Solve (\textbf{OCP}) by indirect methods and denote $(x_{\tau_k}(\cdot),p_{\tau_k}(\cdot),p^0_{\tau_k},u_{\tau_k}(\cdot))$ its numerical extremal solution.}
\vspace{5pt}
\WHILE{$\| \tau_k \| < \| \tau \|$ and $k < k_{\max}$}
\STATE{\vspace{5pt}A.1) Compute $\tau_{k+1}$ by predictor-corrector methods and steplength adaptation;}

\STATE{\vspace{5pt}A.2) Solve recursively (\textbf{OCP})$_{\tau_{k+1}}$ by indirect methods initialized by $p_{\tau_{k}}(\cdot)$ i.e., fixing $k$, an integer $i_{\max}$, a tolerance $\varepsilon$ and setting $i=1$, $p^i_{\tau_{k+1}}(\cdot) := p_{\tau_{k}}(\cdot)$, do \vspace{5pt}

\WHILE{$\|p^{i+1}_{\tau_{k+1}}(\cdot) - p^i_{\tau_{k+1}}(\cdot)\|_{C^0} \geq \varepsilon$, $x^{i+1}_{\tau_{k+1}}(T^{i+1}_{\tau_{k+1}}) \notin M_1$ and $i < i_{\max}$}
\STATE{\vspace{5pt}B.1) Guess $\bar p^{i+1}_{\tau_{k+1}}(\cdot)$ from $p^i_{\tau_{k+1}}(\cdot)$ (via Newton-like algorithms);}

\STATE{\vspace{5pt}B.2) Using the PMP, express $u^{i+1}_{\tau_{k+1}}(\cdot)$ as function of $(x^{i+1}_{\tau_{k+1}}(\cdot),\bar p^{i+1}_{\tau_{k+1}}(\cdot),p^{0,{i+1}}_{\tau_{k+1}})$;}
\STATE{\vspace{5pt}B.3) Solve
\begin{eqnarray*}
	\begin{cases}
		\displaystyle \dot{x}^{i+1}_{\tau_{k+1}}(t) = \displaystyle \frac{\partial H}{\partial p}(\cdot,x^{i+1}_{\tau_{k+1}}(\cdot),\bar p^{i+1}_{\tau_{k+1}}(\cdot),p^{0,{i+1}}_{\tau_{k+1}},u^{i+1}_{\tau_{k+1}}(\cdot))(t) \medskip \\
		x^{i+1}_{\tau_{k+1}}(t) = \phi^1(t) \ , \ t \in [-\Delta,0]
	\end{cases}
\end{eqnarray*}
Then, with $x^{i+1}_{\tau_{k+1}}(\cdot)$ as solution of the previous system, solve
\begin{eqnarray*}
	\begin{cases}
		\displaystyle \dot{p}^{i+1}_{\tau_{k+1}}(t) = \displaystyle -\frac{\partial H}{\partial x}(\cdot,x^{i+1}_{\tau_{k+1}}(\cdot),p^{i+1}_{\tau_{k+1}}(\cdot),p^{0,{i+1}}_{\tau_{k+1}},u^{i+1}_{\tau_{k+1}}(\cdot))(t) \medskip \\
		\displaystyle \hspace{38pt} -\frac{\partial H}{\partial y}(\cdot,x^{i+1}_{\tau_{k+1}}(\cdot),p^{i+1}_{\tau_{k+1}}(\cdot),p^{i+1}_{\tau_{k+1}}(\cdot-\tau^1_{k+1}),p^{0,{i+1}}_{\tau_{k+1}},u^{i+1}_{\tau_{k+1}}(\cdot))(t+\tau^1_{k+1}) \, , \medskip \\
		\hspace{38pt} t \in [0,T^{i+1}_{\tau_{k+1}}-\tau^1_{k+1}] \medskip \\
		\displaystyle \dot{p}^{i+1}_{\tau_{k+1}}(t) = \displaystyle -\frac{\partial H}{\partial x}(\cdot,x^{i+1}_{\tau_{k+1}}(\cdot),p^{i+1}_{\tau_{k+1}}(\cdot),p^{0,{i+1}}_{\tau_{k+1}},u^{i+1}_{\tau_{k+1}}(\cdot))(t) \, , \medskip \\
		\hspace{38pt} t \in (T^{i+1}_{\tau_{k+1}}-\tau^1_{k+1}, T^{i+1}_{\tau_{k+1}}] \medskip \\
		p^{i+1}_{\tau_{k+1}}(T^{i+1}_{\tau_{k+1}}) = \bar p^{i+1}_{\tau_{k+1}}(T^{i+1}_{\tau_{k+1}})
	\end{cases}
\end{eqnarray*}}
\STATE{B.4) $p^i_{\tau_{k+1}}(\cdot) = p^{i+1}_{\tau_{k+1}}(\cdot)$ and $i \rightarrow i+1$.\vspace{5pt}}
\ENDWHILE}
\STATE{\vspace{5pt}A.3) $p_{\tau_k}(\cdot) = p_{\tau_{k+1}}(\cdot)$ and $k \rightarrow k+1$.\vspace{5pt}}
\ENDWHILE
\vspace{5pt}
\end{algorithmic}
\end{algorithm}
\endgroup

These considerations lead us to Algorithm \ref{alg}. To prove the convergence of Algorithm \ref{alg} we apply Theorem \ref{theoMain}. We focus on the case of general state and control delays, highlighting that the same conclusion holds for problems with pure state delays provided that optimal controls can be expressed as continuous functions of the state and the adjoint vector (by using the maximality condition on the Hamiltonian).

Suppose that assumptions $(A_1)$, $(A_2)$, $(A_3)$, $(A_4)$, $(C_1)$, $(C_2)$ and $(C_3)$ hold and that the delay $\tau = (\tau^1,\tau^2) \in [0,\Delta]^2$ considered is such that $\tau \in (0,\tau_0)^2$. Then, we know that for every $\varepsilon = (\varepsilon^1,\varepsilon^2)$ in the open ball $B_{\| \tau \|}(0,0)$, (\textbf{OCP})$_{\varepsilon}$ has at least an optimal solution with normal extremal lift. The first consequence is that, referring to Algorithm \ref{alg}, we can put $p^{0,{i}}_{\tau_{k}} = -1$ for every integers $k$, $i$. Thanks to Theorem \ref{theoMain}, $x_{\varepsilon}(\cdot) \xrightarrow{C^0} x(\cdot)$, $u_{\varepsilon}(\cdot) \xrightarrow{a.e.} u(\cdot)$ and $p_{\varepsilon}(\cdot) \xrightarrow{C^0} p(\cdot)$ as soon as $\varepsilon \rightarrow (0,0)$. Then, the indirect method inside Algorithm \ref{alg} results to be well defined and well initialized by the adjoint vector $p(\cdot)$ of (\textbf{OCP}). Indeed, necessarily, the algorithm will travel backward one of the subsequence converging to the solution of (\textbf{OCP}). Since for every sequence $(\varepsilon_k)_{k \in \mathbb{N}}$ converging to $(0,0)$ the related extremal lift $(x_{\varepsilon}(\cdot),p_{\varepsilon}(\cdot),-1,u_{\varepsilon}(\cdot))$ of (\textbf{OCP})$_{\varepsilon}$ converge to the one of (\textbf{OCP}) (for the evident topologies), every homotopy methods on $\tau$ lead to the same optimal solution of (\textbf{OCP})$_{\tau}$.

\begin{remark}
It is interesting to remark that, at least formally, there are no difficulties to apply Algorithm \ref{alg} to more general (\textbf{OCP})$_{\tau}$ which consider locally bounded varying delays that are functions of the time and the state i.e., $\tau : \mathbb{R} \times \mathbb{R}^n \rightarrow [-\Delta,0]^2 : (t,x) \mapsto \tau(t,x)$. In this context, some relations close to (\ref{dynDual})-(\ref{freeT}) are still provided (see, e.g. \cite{asher1971optimal}), so that, the proposed numerical continuation scheme remains well-defined.
\end{remark}

\section{Numerical Example}

In order to prove effectiveness and robustness of our approach, we test it on two examples. As a matter of standard analysis for numerical approaches to solve optimal control problems with delays, we follow the guideline provided by \cite{gollmann2009optimal}. The first test is an academic example while the second one considers the nontrivial problem consisting of a continuous nonlinear two-stage stirred tank reactor system (CSTR), proposed by \cite{dadebo1992optimal} and \cite{oh1976optimal}.

We stress the fact that, in this paper, we are interested in solving an optimal control problem with delays (\textbf{OCP})$_{\tau}$ starting from its non-delayed version (\textbf{OCP}), without taking care of how (\textbf{OCP}) is solved. Even if we are aware of the fact that this task is far from being easy, here, we focus our attention on the performance achieved once the solution of (\textbf{OCP}) is known. However, as suggested in Section \ref{secIntro}, in many situations one is able to initialize correctly a shooting method on (\textbf{OCP}) (see \cite{trelat2012optimal}).

\subsection{Setting Preliminaries}

The numerical examples proposed are solved applying verbatim Algorithm \ref{alg}. Good solutions are obtained using a basic linear continuation on $\tau$. Moreover, an explicit second-order Runge-Kutta method is handled to solve all the ODEs coming from the dual formulation while the routine \textit{hybrd} \cite{hybrd} is used to solve the shooting problem. The procedure is initialized using the solution of (\textbf{OCP}) provided by the optimization software AMPL \cite{AMPL} combined with the interior point solver IPOPT \cite{wachter2006implementation}.

We stress the fact that one has to be careful when passing the numerical approximation of the extremals in step B.3) of the previous algorithm. Indeed, it is known that, using collocation methods like Runge-Kutta schemes, the error between the solution and its numerical approximation remains bounded throughout $[0,T]$ and decreases with $h^p$, where $h$ is the time step while $p$ is the order of the method, only if this numerical approximation is obtained by interpolating the numerical values within each subinterval of integration with a polynomial of order $p$. From this remark, it is straightforward that the dimension of the shooting considered in Algorithm \ref{alg}, not only it increases w.r.t. $1/h$, but it is also proportional to $p$. In the particular case of an explicit second-order Runge-Kutta method, the dimension of shootings is bounded above by $2 n/h$ (where $n$ is the dimension of the state).

The numerical calculations are computed on a machine Intel(R) Xeon(R) CPU E5-1607 v2 \@ 3.00GHz, with 8.00 Gb of RAM.

\subsection{Analytical Example}

Consider the Optimal Control Problem with Delays (\textbf{OCP})$^1_{\tau}$ which consists in minimizing the cost $J^1(x_{\tau},u_{\tau}) =\int_{0}^{3} (x^2_{\tau}(t) + u^2_{\tau}(t)) \; dt$ subject to
\begin{equation*}
\begin{cases}
\dot{x}_{\tau}(t) = x_{\tau}(t - \tau^1) u_{\tau}(t - \tau^2) \ , \quad t \in [0,3] \medskip \\
x_{\tau}(t) = 1 \ , \ t \in [-\tau^1,0] \quad , \quad  u_{\tau}(t) = 0 \ , \ t \in [-\tau^2,0) \medskip \\
u_{\tau}(\cdot) \in L^{\infty}([-\tau^2,3],\mathbb{R}) \quad , \quad \tau = (\tau^1,\tau^2) = (1,2)
\end{cases}
\end{equation*}
Since no terminal conditions are imposed, this particular (\textbf{OCP})$_{\tau}$ can only have normal extremals. Then, the Hamiltonian is $H(t,x,y,p,u,v) = p y v - x^2 - u^2$ and the adjoint equation is $\dot{p}_{\tau}(t) = 2 x_{\tau}(t) - \mathds{1}_{[0,3-\tau^1]}(t) p_{\tau}(t+\tau^1) u_{\tau}(t-\tau^2+\tau^1)$, with $p_{\tau}(3) = 0$. Finally, we infer from the maximization condition (\ref{maxCond}), that optimal controls are given by $u_{\tau}(t) = \frac{1}{2} \mathds{1}_{[0,3-\tau^2]}(t) x_{\tau}(t+\tau^2-\tau^1) p_{\tau}(t+\tau^2)$. The paper \cite{gollmann2009optimal} shows that the optimal synthesis of (\textbf{OCP})$^1_{\tau}$ can be obtained analytically. In particular, one has
\begin{equation} \label{contEx1}
u_{2}(t) = \frac{\mathds{1}_{[0,1]}(t)}{e^2+1} (e^{t} - e^{2-t}) \quad , \quad
x_{1}(t) = \mathds{1}_{[0,2]}(t) + \frac{\mathds{1}_{[2,3]}(t)}{e^2+1} (e^{t-2} + e^{4-t})
\end{equation}
Considering Remark \ref{remarkExample}, we apply Algorithm \ref{alg} to solve (\textbf{OCP})$^1_{\tau}$ with $N = 1/h = 60$ Runge-Kutta time steps and a tolerance of $10^{-10}$ and $1500$ maximal iterations for \textit{hybrd} routine. Using a Simpson's rule, the optimal value $J^1(x_{\tau},u_{\tau}) = 2.76173$ is obtained in $20 \, \textnormal{ms}$ just in one iteration of the continuation scheme. Moreover, global errors in the sup norm between (\ref{contEx1}) and their numerical approximations respectively of $0.024 \, \%$ for the control and of $0.031 \, \%$ for the state are obtained. Figure \ref{figEx1} shows the optimal quantities for (\textbf{OCP})$^1_{\tau}$, its non-delayed version and an intermediate solution when $\tau = (0.5,1)$.
\begin{figure}[htpb] \label{figEx1}
\centering
\includegraphics[width=1.\textwidth]{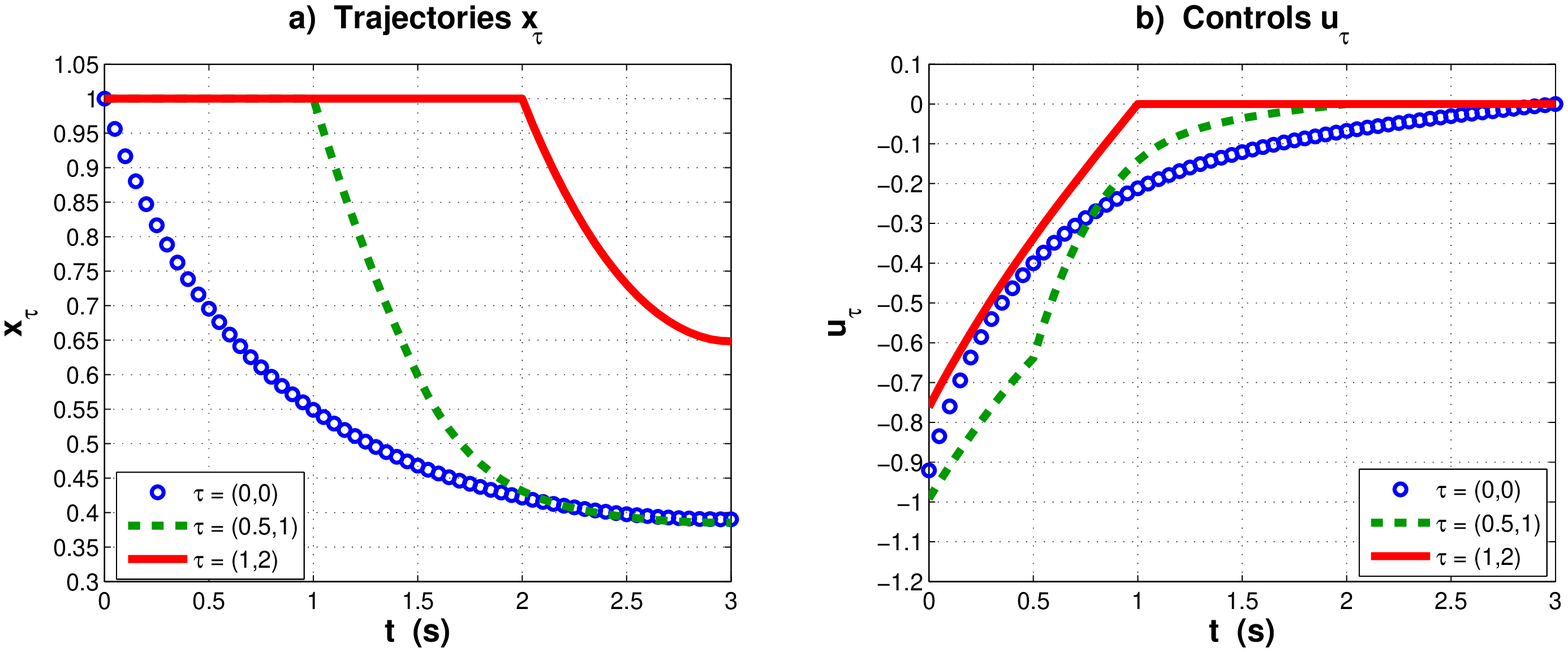}

\includegraphics[width=0.46\textwidth]{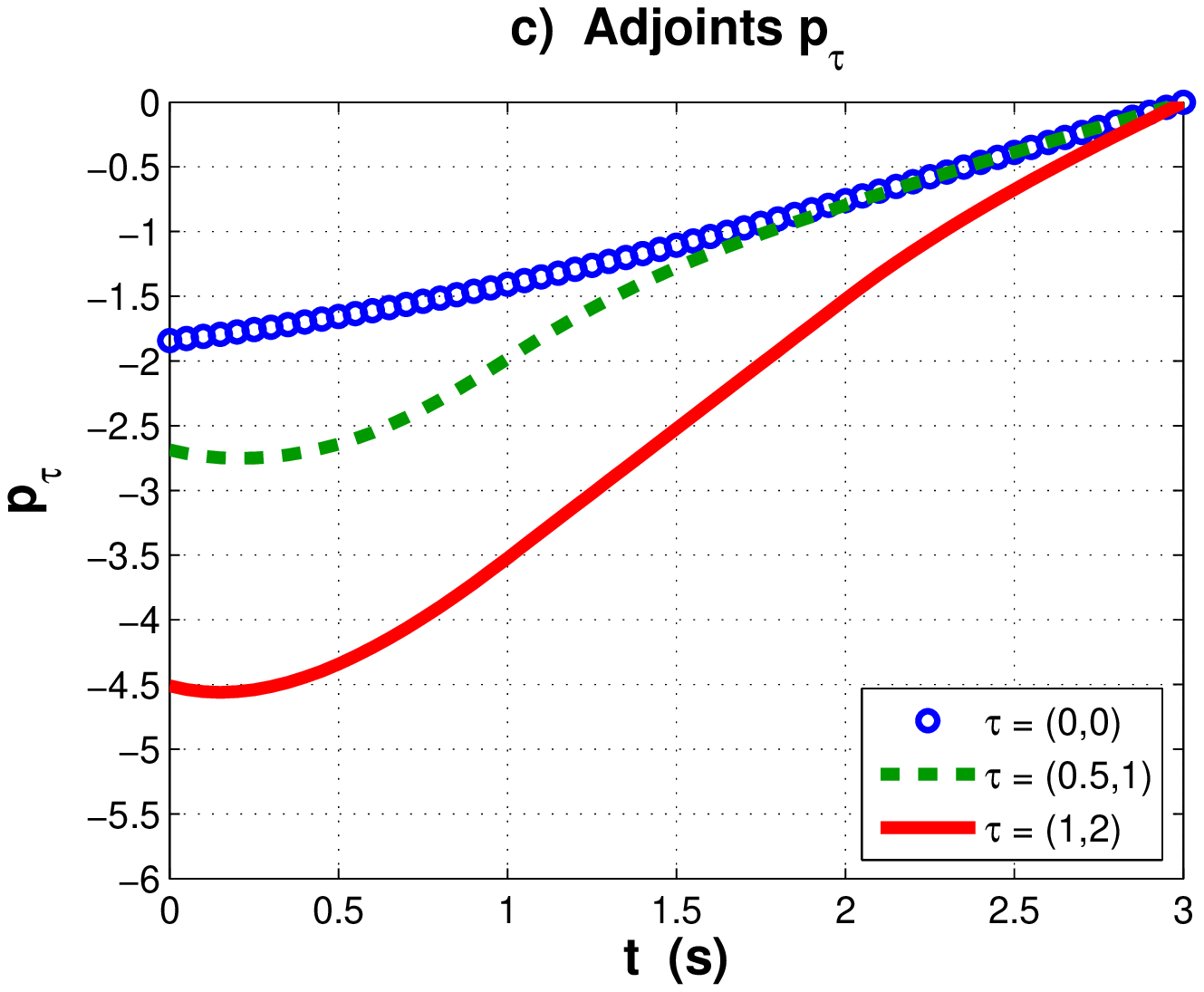}
\caption{a) Optimal states $x_{\tau}(\cdot)$, b) Optimal controls $u_{\tau}(\cdot)$, c) Optimal adjoints $p_{\tau}(\cdot)$ for (\textbf{OCP})$^1_{\tau}$.}
\end{figure}

\subsection{A Nonlinear Chemical Tank Reactor Model}

Let us consider a two-stage nonlinear continuous stirred tank reactor (CSTR) system with a first-order irreversible chemical reaction occurring in each tank. The system was studied by \cite{oh1976optimal} and successively by \cite{dadebo1992optimal} in the framework of the dynamic programming. This Optimal Control Problem with Delays (\textbf{OCP})$^{2}_{\tau}$ consists in minimizing the cost $J^2(x_{\tau},u_{\tau}) =\int_{0}^{T} \Big( \|x_{\tau}(t)\|^2 + 0.1\|u_{\tau}(t)\|^2 \Big) \; dt$ subject to
\begin{equation*}
\begin{cases}
\dot{x}^1_{\tau}(t) = 0.5 - x^1_{\tau}(t) - R_1(x^1_{\tau}(t),x^2_{\tau}(t)) \ , \ t \in [0,T] \medskip \\
\dot{x}^2_{\tau}(t) = R_1(x^1_{\tau}(t),x^2_{\tau}(t)) - (u^1_{\tau}(t) + 2)(x^2_{\tau}(t) + 0.25) \ , \ t \in [0,T] \medskip \\
\dot{x}^3_{\tau}(t) = x^1_{\tau}(t-\tau) - x^3_{\tau}(t) - R_2(x^3_{\tau}(t),x^4_{\tau}(t)) + 0.25 \ , \ t \in [0,T] \medskip \\
\dot{x}^4_{\tau}(t) =x^2_{\tau}(t-\tau) - 2 x^4_{\tau}(t) -u^2_{\tau}(t) (x^4_{\tau}(t) + 0.25) + R_2(x^3_{\tau}(t),x^4_{\tau}(t)) \medskip \\
\hspace{26pt} - 0.25 \ , \ t \in [0,T] \medskip \\
x^1_{\tau}(t) = 0.15 \ , \ x^2_{\tau}(t) = -0.03 \ , \ t \in [-\tau,0] \quad , \quad x^3_{\tau}(0) = 0.1 \ , \ x^4_{\tau}(0) = 0 \medskip \\
u^1_{\tau}(\cdot) \ , \ u^2_{\tau}(\cdot) \in L^{\infty}([0,T],\mathbb{R})
\end{cases}
\end{equation*}
where, now, we have a fixed scalar delay $\tau$ which is chosen in the interval $[0,0.8]$ and acts on the state only, the final time $T = 2$ is fixed and functions $R_1$, $R_2$ are given by $R_1(x,y) = (x + 0.5) \textnormal{exp}\bigg( \frac{25 y}{y + 2} \bigg)$, $R_2(x,y) = (x + 0.25) \textnormal{exp}\bigg( \frac{25 y}{y + 2} \bigg)$. Since no terminal conditions are imposed, (\textbf{OCP})$^{2}_{\tau}$ have only normal extremals. The Hamiltonian is
$$
H = p^1 \Big(0.5 - x^1 - R_1(x^1,x^2)\Big) + p^2 \Big(R_1(x^1,x^2) - (u^1+2)(x^2+0.25)\Big)
$$
$$
+ p^3 \Big(y^1 - x^3 - R_2(x^3,x^4) + 0.25\Big) + p^4 \Big(y^2 - 2 x^4 - u^2(x^4 + 0.25) + R_2(x^3,x^4) - 0.25\Big)
$$
$$
- \Big( (x^1)^2 + (x^2)^2 + (x^3)^2 + (x^4)^2 + 0.1 (u^1)^2 + 0.1 (u^2)^2 \Big)
$$
Thus, the adjoint equations take the following form
\begin{equation*}
\begin{cases}
\displaystyle \dot{p}^1_{\tau}(t) = p^1_{\tau}(t) + \frac{\partial R_1}{\partial x}(x^1_{\tau}(t),x^2_{\tau}(t)) \Big( p^1_{\tau}(t) - p^2_{\tau}(t) \Big) + 2 x^1_{\tau}(t)  \medskip \\
\hspace{26pt} - \mathds{1}_{[0,T-\tau]}(t) p^3_{\tau}(t+\tau) \medskip \\
\displaystyle \dot{p}^2_{\tau}(t) = p^2_{\tau}(t) (u^1_{\tau}(t) + 2) + \frac{\partial R_1}{\partial y}(x^1_{\tau}(t),x^2_{\tau}(t)) \Big( p^1_{\tau}(t) - p^2_{\tau}(t) \Big) + 2 x^2_{\tau}(t) \medskip \\
\hspace{26pt} - \mathds{1}_{[0,T-\tau]}(t) p^4_{\tau}(t+\tau) \medskip \\
\displaystyle \dot{p}^3_{\tau}(t) = p^3_{\tau}(t) + \frac{\partial R_2}{\partial x}(x^3_{\tau}(t),x^4_{\tau}(t)) \Big( p^3_{\tau}(t) - p^4_{\tau}(t) \Big) + 2 x^3_{\tau}(t) \medskip \\
\displaystyle \dot{p}^4_{\tau}(t) = p^4_{\tau}(t) (u^2_{\tau} + 2) + \frac{\partial R_2}{\partial y}(x^3_{\tau}(t),x^4_{\tau}(t)) \Big( p^3_{\tau}(t) - p^4_{\tau}(t) \Big) + 2 x^4_{\tau}(t)
\end{cases}
\end{equation*}
with $p_{\tau}(T) = 0$. From the maximality condition (\ref{maxCond}), the optimal controls are given by $u^1_{\tau}(t) = -5 p^2_{\tau}(t) \Big( x^2_{\tau}(t) + 0.25 \Big)$, $u^2_{\tau}(t) = -5 p^4_{\tau}(t) \Big( x^4_{\tau}(t) + 0.25 \Big)$.

\begingroup
\begin{table}[ht]
\centering
\begin{tabular}{c c c c c c c}
\hline\hline
$\tau$ & 0 & 0.05 & 0.1 & 0.2 & 0.4 \\ [0.5ex]
\hline
$J^2(x_{\tau},u_{\tau})$ & 0.02248 & 0.02282 & 0.02313 & 0.02370 & 0.02459 \\ [1ex]
\hline
N. of Continuation Iterations & 1 & 1 & 1 & 1 & 3 \\ [1ex]
\hline
Computational Time (s) & 0.200 & 0.240 & 0.450 & 0.820 & 2.120 \\ [1ex]
\hline\hline
\end{tabular}

\vspace{20pt}

\begin{tabular}{c c c c c c c}
\hline\hline
$\tau$ & 0.6 & 0.8 & 1 & 1.2 & 1.5 \\ [0.5ex]
\hline
$J^2(x_{\tau},u_{\tau})$ & 0.02516 & 0.02547 & 0.02556 & 0.02549 & 0.02527 \\ [1ex]
\hline
N. of Continuation Iterations & 5 & 5 & 5 & 3 & 6 \\ [1ex]
\hline
Computational Time (s) & 2.860 & 3.260 & 3.070 & 2.150 & 4.220 \\ [1ex]
\hline\hline
\end{tabular}

\caption{Optimal values of (\textbf{OCP})$^{2}_{\tau}$ for different delays $\tau$.}
\label{tableCostEx2}
\end{table}
\endgroup
We test several different delays $\tau$, as it was done in \cite{oh1976optimal}. We use $N = 1/h = 50$ Runge-Kutta time steps, a tolerance of $10^{-10}$ and $1500$ maximal iterations for \textit{hybrd} routine and a Simpson's rule for the cost. Our results are reported in Table 1.

Clearly these values are comparable with the ones obtained by \cite{oh1976optimal} and \cite{dadebo1992optimal}. Moreover, our continuation scheme finds a solution also for larger delays i.e., $\tau \in [0.8,1.5]$ (see Table 1). As expected, the more the delay grows the larger the number of iterations of the continuation method is. In order to check our results, we implemented also these problems within an AMPL framework (IPOPT solver with an explicit forward Euler scheme and N = 100000) and the optimal values provided by the direct method are the same. Nevertheless, for $10000 \le N \le 100000$, AMPL takes between 4.4 and 43.7 seconds to find the optimal solution, while our simulations take at most 4.3 s. Some optimal quantities obtained using our method are shown in Figure \ref{figEx2}.
\begin{figure}[htpb] \label{figEx2}
\centering
\includegraphics[width=1.\textwidth]{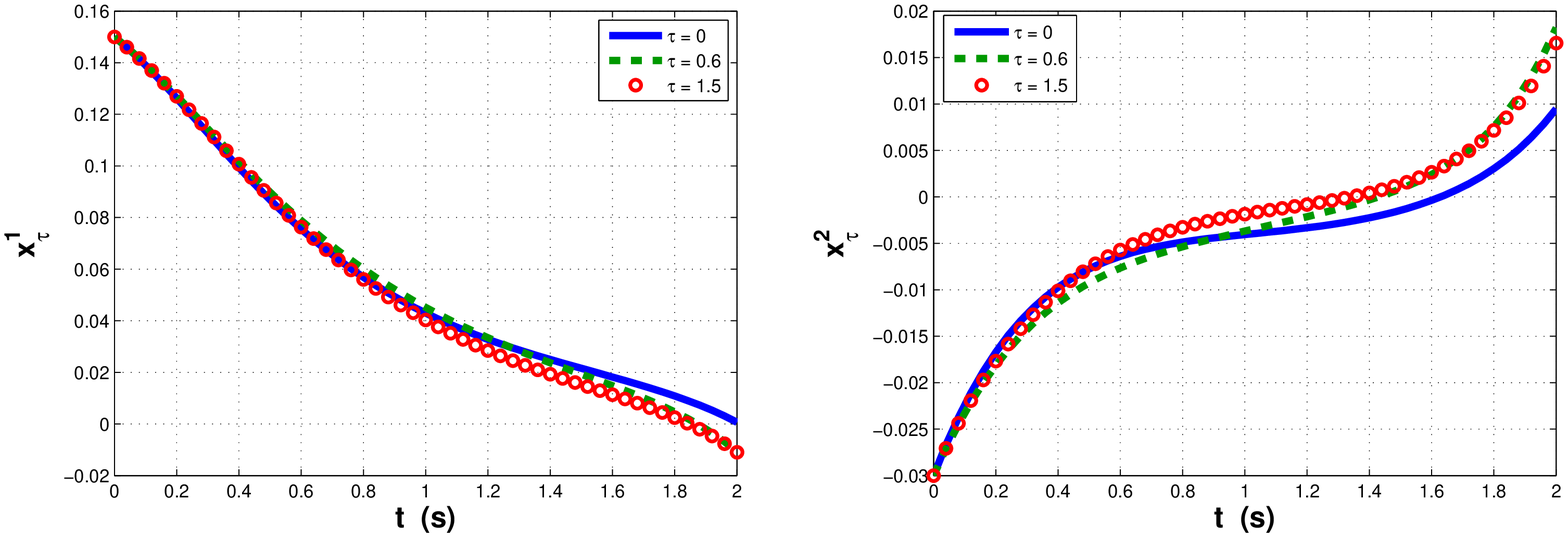}

\includegraphics[width=1.\textwidth]{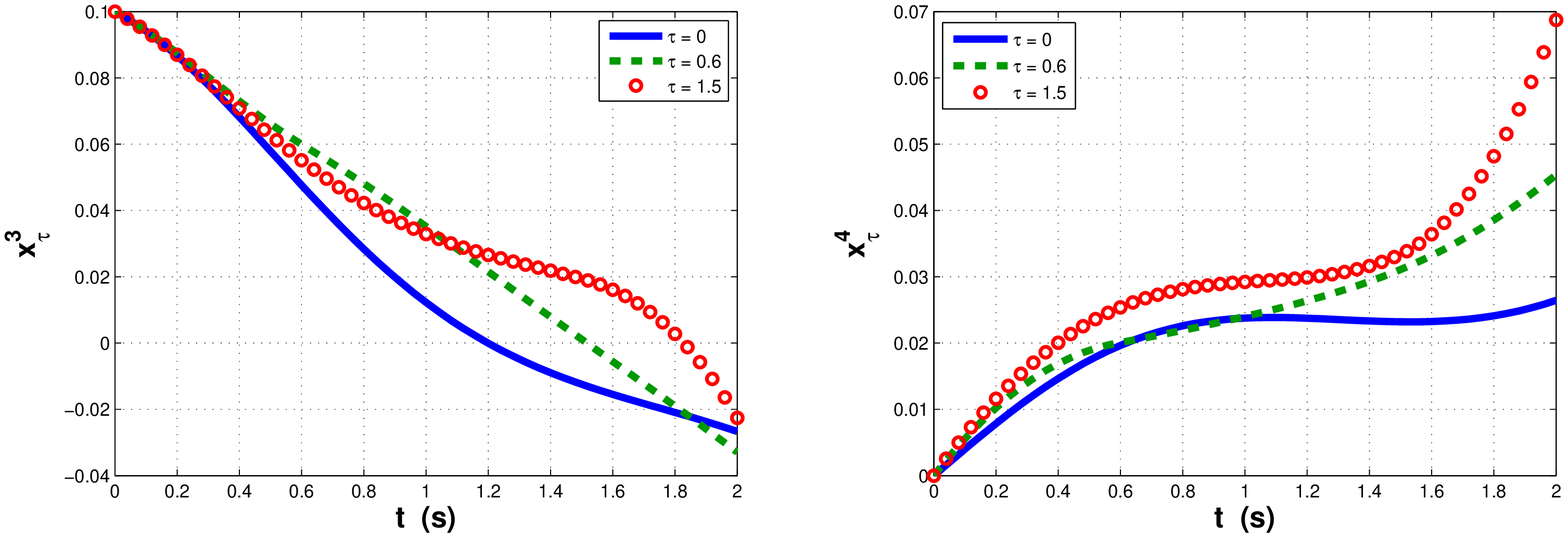}

\includegraphics[width=1.\textwidth]{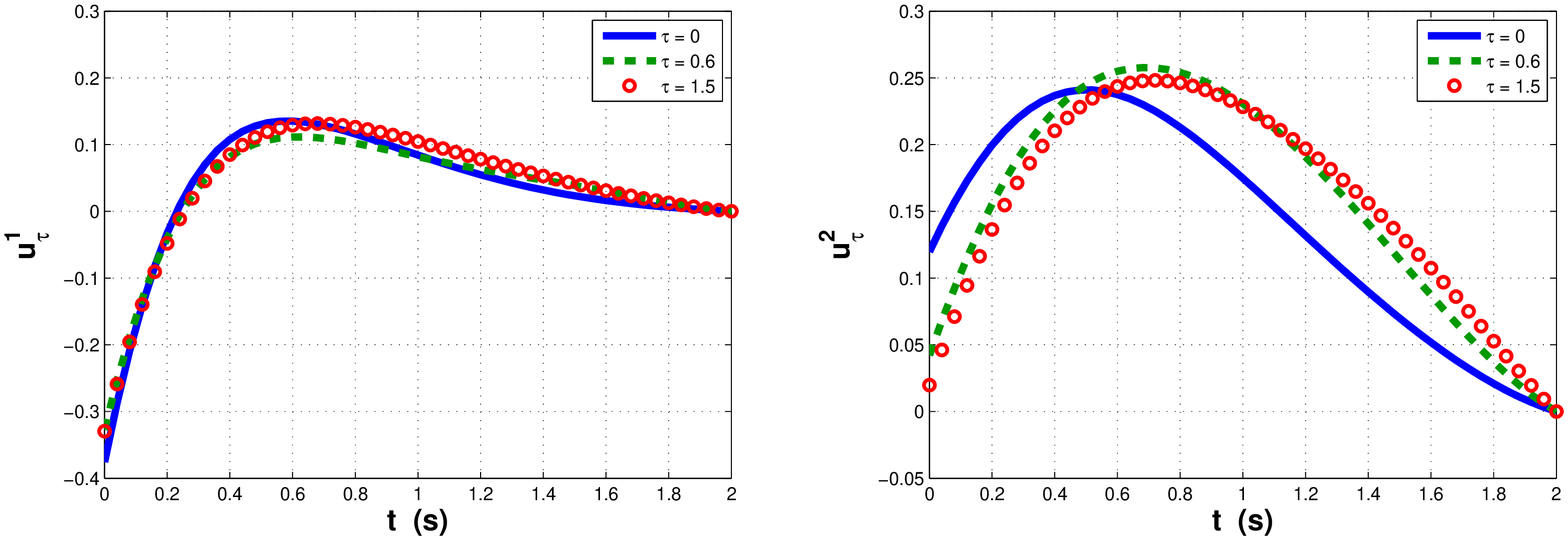}
\caption{Optimal quantities of (\textbf{OCP})$^{2}_{\tau}$ for different delays $\tau$.}
\end{figure}

\section{Conclusions}

In this paper we propose a new methodology to solve optimal control problems with delays by means of a shooting method combined with continuation on the delay. It is known that necessary conditions applied to this kind of problems lead to differential-difference boundary value problems which are strongly complex to solve. We overcome this difficulty by introducing the delay step by step exploiting homotopy procedures. Under appropriate assumptions, we provide a crucial convergence result that justifies rigorously the proposed approach. Moreover, the numerical simulations confirm the validity of the procedure for nontrivial applications.

Future works focus on applying this strategy to compute in \textit{real-time} optimal solutions in the context of \textit{aeronautic motion planning}. In several situations, when simulating launch vehicle systems, fast optimal solutions must be provided taking into account phenomena like the \textit{non-minimum phase} (see, e.g. \cite{balas2012adaptive}). Since this phenomenon can be approximated by delays, one is led to manage a fast resolution of optimal control problems with delays. Then, indirect methods are preferred and the proposed approach is a good candidate for a real-time processing. Moreover, in this constext, it is shown (see, e.g. \cite{myIFAC2017}) that the solution of the non-delayed version of the problem can be rapidly provided by combining shooting methods again with homotopy procedures. This fixes the flaw concerning the initialization of (\textbf{OCP}) in the launch vehicle context.

\newpage

\appendix
\section{Proof of Theorem \ref{theoMain}} \label{appProof}
This final section is devoted to the proof of Theorem \ref{theoMain} and is organized in two main parts.

First, the fundamental steps of the classical proof of the PMP with constant delays in the state and the control are recalled. Basically, this proof lies on the development of some ad hoc variations of the optimal control and it is interesting to note that such approach is barely presented in the litterature. This proof is important because it introduces tools and techniques which are the basis for the proof of our main result.

Afterwards, the main result is proved. The questions of the existence of delayed controls and trajectories and their convergence are addressed firstly. As final result, the convergence of delayed adjoint vectors is proposed. This nontrivial result requires an accurate analysis of the convergence of Pontryagin cones which is accomplished using a conic version of the implicit function theorem (see, e.g. \cite{silva2010smooth,haberkorn2011convergence}).
\subsection{Delayed Maximum Principle}
In this section we sketch the proof of the Maximum Principle for (\textbf{OCP})$_{\tau}$ using needle-like variations. For this, we do not rely on the assumptions of Theorem \ref{theoMain}, giving the result for a large class of control systems with delays. It is interesting to note that our reasoning is not affected for problem with free final time $T_{\tau}$. Indeed, we do not employ the well known reduction to a fixed final time problem, but rather, we modify conveniently the Pontryagin cone to keep track of the free variable $T_{\tau}$ (as done in \cite{kharatishvili1961maximum}).
\subsubsection{Preliminaries}
For every $\tau = (\tau^1,\tau^2) \in [0,\Delta]^2$, consider (\textbf{OCP})$_{\tau}$. For every positive final time $T$, introduce the instantaneous cost function $x^0_{\tau}(\cdot)$, defined on $[-\Delta,T]$ and solution of
\begin{equation*}
\begin{cases}
\dot{x}^0_{\tau}(t) = f^0(t,x_{\tau}(t),x_{\tau}(t-\tau^1),u_{\tau}(t),u_{\tau}(t-\tau^2)) \ , \quad t \in [0,T] \medskip \\
x^0_{\tau}(t) = 0 \ , \ t \in [-\Delta,0]
\end{cases}
\end{equation*}
such that the cost $C_{T}(\tau,u_{\tau}(\cdot))$ of the initial trajectory $x_{\tau}(\cdot)$ is $C_{T}(\tau,u_{\tau}(\cdot)) = x^0_{\tau}(T)$. The extended state $\tilde x \in \mathbb{R}^{n+1}$ is defined by $\tilde x = (x,x^0)$, and the extended dynamics by $\tilde f(t,\tilde x,\tilde y,u,v) = (f(t,x,y,u,v),f^0(t,x,y,u,v))$. We will often denote $\tilde f(t,x,y,u,v) = \tilde f(t,\tilde x,\tilde y,u,v)$. Then, consider the extended delayed control system in $\mathbb{R}^{n+1}$,
\begin{equation} \label{dynDelayAug}
\begin{cases}
\dot{\tilde{x}}_{\tau}(t) = f(t,\tilde x_{\tau}(t),\tilde x_{\tau}(t-\tau^1),u_{\tau}(t),u_{\tau}(t-\tau^2)) \ , \quad t \in [0,T] \medskip \\
\tilde x_{\tau}(t) = (\phi^1(t),0) \ , \ t \in [-\Delta,0] \quad , \quad  u_{\tau}(t) = \phi^2(t) \ , \ t \in [-\Delta,0) \medskip \\
u_{\tau}(\cdot) \in L^{\infty}([-\Delta,T],\Omega)
\end{cases}
\end{equation}

Admissible controls for system (\ref{dynDelayAug}) are defined in the same way as we defined admissible controls for (\ref{dynDelay}), and we denote $\tilde{\mathcal{U}}^{\tau}_{T,\mathbb{R}^m}$ the set of all admissible controls of (\ref{dynDelayAug}) taking their values in $\mathbb{R}^m$ while $\tilde{\mathcal{U}}^{\tau}_{T,\Omega}$ denotes the set of all admissible controls of (\ref{dynDelayAug}) with values in $\Omega$. Then, $\tilde{\mathcal{U}}^{\tau}_{\mathbb{R}^m} := \cup_{T > 0} \tilde{\mathcal{U}}^{\tau}_{T,\mathbb{R}^m}$ and $\tilde{\mathcal{U}}^{\tau}_{\Omega} := \cup_{T > 0} \tilde{\mathcal{U}}^{\tau}_{T,\Omega}$.

For every $u_{\tau}(\cdot) \in \tilde{\mathcal{U}}^{\tau}_{T,\mathbb{R}^m}$, the extended \textit{end-point mapping} $\tilde E_{\tau}$ is then defined by $\tilde E_{\tau}(T,u_{\tau}(\cdot)) = \tilde x_{\tau}(T)$. As standard facts, the set $\tilde{\mathcal{U}}^{\tau}_{T,\mathbb{R}^m}$, endowed with the standard topology of $L^{\infty}([-\Delta,T],\mathbb{R}^m)$, is open and the end-point mapping is $C^1$ on $\tilde{\mathcal{U}}^{\tau}_{T,\mathbb{R}^m}$.

For every $\tau = (\tau^1,\tau^2) \in [0,\Delta]^2$ and every $t \ge 0$, define the extended \textit{accessible set} $\tilde{\mathcal{A}}_{\tau,\Omega}(t)$ as the image of the mapping $\tilde E_{\tau}(t,\cdot) : \tilde{\mathcal{U}}^{\tau}_{t,\Omega} \rightarrow \mathbb{R}^{n+1}$, with the agreement $\tilde{\mathcal{A}}_{\tau,\Omega}(0) = \{ (\phi^1(0),0) \}$. The following classical fact is at the basis of the proof of the Maximum Principle.
\begin{lemma} \label{interior}
Given a couple of delays $\tau = (\tau^1,\tau^2) \in [0,\Delta]^2$, let $(x_{\tau}(\cdot),u_{\tau}(\cdot))$ be a solution of (\textbf{OCP})$_{\tau}$ defined on $[-\Delta,T_{\tau}]$. Then, the point $\tilde x_{\tau}(T_{\tau})$ belongs to the boundary of the set $\tilde{\mathcal{A}}_{\tau,\Omega}(T_{\tau})$.
\end{lemma}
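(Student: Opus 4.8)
The plan is to prove that the optimal endpoint $\tilde x_\tau(T_\tau)$ lies on the boundary of the accessible set $\tilde{\mathcal A}_{\tau,\Omega}(T_\tau)$ by contradiction: if it were an interior point, one could perturb the trajectory so as to strictly lower the cost while still reaching $M$, contradicting optimality. This is the standard first geometric ingredient in proofs of the Pontryagin Maximum Principle, adapted to the delayed setting, and it exploits the fact that the cost appears as the last coordinate $x^0_\tau$ of the extended state.

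\textbf{Step 1: set up the contradiction.} Suppose $\tilde x_\tau(T_\tau) = (x_\tau(T_\tau), x^0_\tau(T_\tau))$ belongs to the interior of $\tilde{\mathcal A}_{\tau,\Omega}(T_\tau)$. Then there is $\varepsilon > 0$ such that the ball $B(\tilde x_\tau(T_\tau), \varepsilon) \subseteq \tilde{\mathcal A}_{\tau,\Omega}(T_\tau)$. In particular the point $(x_\tau(T_\tau), x^0_\tau(T_\tau) - \varepsilon/2)$ is accessible: there exists an admissible control $v_\tau(\cdot) \in \tilde{\mathcal U}^\tau_{T_\tau,\Omega}$ whose associated extended trajectory $\tilde x_{\tau,v}(\cdot)$ satisfies $\tilde E_\tau(T_\tau, v_\tau(\cdot)) = (x_\tau(T_\tau), x^0_\tau(T_\tau) - \varepsilon/2)$.

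\textbf{Step 2: derive the contradiction with optimality.} By construction the trajectory $x_{\tau,v}(\cdot)$ has the same final state as $x_\tau(\cdot)$, namely $x_{\tau,v}(T_\tau) = x_\tau(T_\tau) \in M$, so $v_\tau(\cdot)$ is admissible for \textbf{(OCP)}$_\tau$ with the same final time $T_\tau$. However, its cost is $C_{T_\tau}(\tau, v_\tau(\cdot)) = x^0_{\tau,v}(T_\tau) = x^0_\tau(T_\tau) - \varepsilon/2 < x^0_\tau(T_\tau) = C_{T_\tau}(\tau, u_\tau(\cdot))$, which contradicts the optimality of $(x_\tau(\cdot), u_\tau(\cdot))$. (If the final time is free one must be slightly more careful, comparing costs over admissible final times; but since $v_\tau$ realizes the target with the same $T_\tau$, the argument goes through unchanged because $T_\tau$ is in particular an admissible final time for the competitor.) Hence the assumption was false and $\tilde x_\tau(T_\tau) \in \partial \tilde{\mathcal A}_{\tau,\Omega}(T_\tau)$.

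\textbf{The main subtlety}, and the only place where the delayed structure matters, is ensuring that the competitor control $v_\tau(\cdot)$ really is admissible in the sense required by \textbf{(OCP)}$_\tau$ — that is, that the well-posedness of the delayed dynamics \eqref{dynDelayAug} is not disturbed. This is automatic here: accessibility is defined precisely through $\tilde E_\tau$, which is built from solutions of \eqref{dynDelayAug} with the prescribed initial data $(\phi^1,0)$ and $\phi^2$ on $[-\Delta,0]$, so any point of $\tilde{\mathcal A}_{\tau,\Omega}(T_\tau)$ is by definition attained by such a solution. Thus no extra regularity argument is needed, and the proof is essentially a one-line consequence of the definitions once the contradiction is set up. I do not expect any genuine obstacle; the lemma is a routine transfer of the classical (non-delayed) argument, with the delay entering only through the (already established) fact that $\tilde E_\tau$ is well defined and $C^1$ on the open set $\tilde{\mathcal U}^\tau_{T_\tau,\mathbb R^m}$.
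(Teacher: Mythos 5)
Your proof is correct and is precisely the classical argument the paper relies on: the paper states this lemma without proof, calling it a "classical fact," and the standard justification is exactly your contradiction via the cost coordinate of the extended state (an interior endpoint would allow reaching the same target point in $M$ at the same time $T_{\tau}$ with strictly smaller cost $x^0$). No gaps.
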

\subsubsection{Needle-Like Variations and Pontryagin Cone} \label{sectNeedle}
In what follows we consider (\textbf{OCP})$_{\tau}$ where the final time $T_{\tau}$ is free; all the proposed results can be easily adapted when $T_{\tau}$ is fixed. Moreover, we suppose that $T_{\tau}$ is a Lebesgue point for the optimal control $u_{\tau}(\cdot)$ of (\textbf{OCP})$_{\tau}$ and of $u_{\tau}(\cdot-\tau_2)$. If it is not, we can extend all the conclusions by using closure points of $T_{\tau}$ (in the same way as in \cite{lee1967foundations,gamkrelidze2013principles}).

Given a couple of delays $\tau = (\tau^1,\tau^2) \in [0,\Delta]^2$, let $(x_{\tau}(\cdot),u_{\tau}(\cdot))$ be a solution of (\textbf{OCP})$_{\tau}$. Without loss of generality, we extend $u_{\tau}(\cdot)$ by some constant vector of $\Omega$ in $[T_{\tau},T_{\tau}+\tau^2]$. Let $p \ge 1$ be an integer and consider $0 < t_1 < \dots < t_p < T_{\tau}$ Lebesgue points respectively of $u_{\tau}(\cdot)$, $u_{\tau}(\cdot-\tau^2)$ and of $u_{\tau}(\cdot+\tau^2)$. Chosing $p$ arbitrary values $u_i \in \Omega$, for every $\eta_i > 0$ such that $t_i + \eta_i < T_{\tau}$, the \textit{needle-like variation} $\pi = \{ t_1,\dots,t_p,\eta_1,\dots,\eta_p,u_1,\dots,u_p \}$ of the control $u_{\tau}(\cdot)$ is defined by
$$
u^{\pi}_{\tau}(t) := \begin{cases} u_i & t \in [t_i,t_i + \eta_i), \\ u_{\tau}(t) & \textnormal{otherwise}. \end{cases}
$$
Control $u^{\pi}_{\tau}(\cdot)$ takes its values in $\Omega$ and, by continuity w.r.t. initial data, it is clear that, if every $\eta_i > 0$ is small enough, then $u^{\pi}_{\tau}(\cdot) \in \tilde{\mathcal{U}}^{\tau}_{\Omega}$. Moreover, the trajectory $\tilde x^{\pi}_{\tau}(\cdot)$ solution of (\ref{dynDelayAug}) with control $u^{\pi}_{\tau}(\cdot)$ converges uniformly to $\tilde x_{\tau}(\cdot)$ whenever $\| (\eta_1,\dots,\eta_p) \| \rightarrow 0$.

For every $z \in \Omega$ and appropriate Lebesgue point $s$, we define the vectors
\begin{eqnarray} \label{omegaMinus}
\omega^-_z(s) &:=& \tilde f(s,\tilde x_{\tau}(s),\tilde x_{\tau}(s-\tau^1),z,u_{\tau}(s-\tau^2)) \\
&-& \tilde f(s,\tilde x_{\tau}(s),\tilde x_{\tau}(s-\tau^1),u_{\tau}(s),u_{\tau}(s-\tau^2)) \nonumber
\end{eqnarray}
\begin{eqnarray} \label{omegaPlus}
\omega^+_z(s) &:=& \tilde f(s+\tau^2,\tilde x_{\tau}(s+\tau^2),\tilde x_{\tau}(s+\tau^2-\tau^1),u_{\tau}(s+\tau^2),z) \\
&-& \tilde f(s+\tau^2,\tilde x_{\tau}(s+\tau^2),\tilde x_{\tau}(s+\tau^2-\tau^1),u_{\tau}(s+\tau^2),u_{\tau}(s)) \nonumber
\end{eqnarray}
and, given $\xi \in \mathbb{R}^{n+1}$ and $s \in (0,T_{\tau})$, we denote $\tilde v^{\tau}_{s,\xi}(\cdot)$ the solution of
\begin{eqnarray} \label{dynVariation}
\begin{cases}
\dot{\psi}(t) &= \displaystyle \frac{\partial \tilde f}{\partial x}(t,\tilde x_{\tau}(t),\tilde x_{\tau}(t-\tau^1),u_{\tau}(t),u_{\tau}(t-\tau^2)) \psi(t) \medskip \\
&+ \displaystyle \frac{\partial \tilde f}{\partial y}(t,\tilde x_{\tau}(t),\tilde x_{\tau}(t-\tau^1),u_{\tau}(t),u_{\tau}(t-\tau^2)) \psi(t-\tau^1) \medskip \\
\psi(s) &= \xi \quad , \quad \psi(t) = 0 \ , \ t \in (s-\tau^1,s)
\end{cases}
\end{eqnarray}
Vectors $\tilde v^{\tau}_{s,\xi}(\cdot)$ are called \textit{variations vectors}. The first crucial part of the proof of the Maximum Principle consists of the following result.
\begin{lemma} \label{needleLike}
Let $\delta \in \mathbb{R}$ small enough. Then, for every Lebesgue point $t \le T_{\tau}$ of $u_{\tau}(\cdot)$ and $u_{\tau}(\cdot-\tau^2)$, the following expression holds
\begin{eqnarray*}
\tilde x^{\pi}_{\tau}(t + \delta) &=& \tilde x_{\tau}(t) + \delta \tilde f(t,\tilde x_{\tau}(t),\tilde x_{\tau}(t-\tau^1),u_{\tau}(t),u_{\tau}(t-\tau^2)) \medskip \\
&+& \sum_{i=1}^{p} \eta_i \Big( \tilde v^{\tau}_{t_i,\omega^-_{u_i}(t_i)}(t) + \tilde v^{\tau}_{t_i+\tau^2,\omega^+_{u_i}(t_i)}(t) \Big) + o \Big( \delta + \sum_{i=1}^{p} \eta_i \Big)
\end{eqnarray*}
\end{lemma}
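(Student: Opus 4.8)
The plan is to prove Lemma \ref{needleLike} by a first-order expansion of the trajectory of the extended delayed system (\ref{dynDelayAug}) under the needle-like variation $u^\pi_\tau(\cdot)$, tracking how each elementary needle at a time $t_i$ perturbs the state both \emph{immediately} (through the state-delay slot, via $\omega^-_{u_i}(t_i)$) and \emph{later} (after a lag $\tau^2$, through the control-delay slot, via $\omega^+_{u_i}(t_i)$). I would first treat the case of a single needle ($p=1$) and the $\delta=0$ part, and then superpose; the $\delta$ term is an immediate consequence of the fundamental theorem of calculus at the Lebesgue point $t$, since $\tilde x^\pi_\tau(t+\delta)=\tilde x^\pi_\tau(t)+\delta\,\dot{\tilde x}^\pi_\tau(t)+o(\delta)$ and $\dot{\tilde x}^\pi_\tau(t)=\tilde f(t,\tilde x_\tau(t),\tilde x_\tau(t-\tau^1),u_\tau(t),u_\tau(t-\tau^2))+o(1)$ because $t$ lies outside the (shrinking) supports of the needles and $\tilde x^\pi_\tau\to\tilde x_\tau$ uniformly.

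For the single needle at $t_1$ with amplitude $\eta_1$, write $z(t):=\tilde x^\pi_\tau(t)-\tilde x_\tau(t)$. On $[-\Delta,t_1)$ we have $z\equiv 0$. On the interval $[t_1,t_1+\eta_1)$, integrating (\ref{dynDelayAug}) gives, using the Lebesgue-point property of $u_\tau(\cdot)$ and $u_\tau(\cdot-\tau^2)$ at $t_1$ and the uniform smallness of $z$ there,
\begin{equation*}
z(t_1+\eta_1)=\eta_1\,\omega^-_{u_1}(t_1)+o(\eta_1),
\end{equation*}
because on that short window the state and the delayed state are frozen at their values $\tilde x_\tau(t_1),\tilde x_\tau(t_1-\tau^1)$ up to $o(1)$, the instantaneous control is $u_1$, and the delayed control is still $u_\tau(\cdot-\tau^2)$. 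This provides the initial ``jump'' of size $\eta_1\,\omega^-_{u_1}(t_1)$ that, propagated forward by the linearized variational equation (\ref{dynVariation}), yields the term $\eta_1\,\tilde v^\tau_{t_1,\omega^-_{u_1}(t_1)}(t)$. The second, delayed contribution appears on the window $[t_1+\tau^2,t_1+\tau^2+\eta_1)$: there the \emph{delayed control} $u_\tau^\pi(\cdot-\tau^2)$ equals $u_1$ instead of $u_\tau(\cdot)$, while the instantaneous control is the unperturbed $u_\tau(\cdot+\tau^2)\circ(\cdot-\tau^2)$; integrating over this window and again using the Lebesgue-point hypotheses (this time on $u_\tau(\cdot+\tau^2)$ at $t_1$) produces a second jump of size $\eta_1\,\omega^+_{u_1}(t_1)$, inserted at time $t_1+\tau^2$, whose forward propagation by (\ref{dynVariation}) gives $\eta_1\,\tilde v^\tau_{t_1+\tau^2,\omega^+_{u_1}(t_1)}(t)$. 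Between these windows and after them, $z(\cdot)$ solves a linear delay ODE obtained by Taylor-expanding $\tilde f$ about the reference extremal, whose linear part is exactly (\ref{dynVariation}) and whose remainder is $o(\eta_1)$ uniformly on the bounded time interval; a Gr\"onwall estimate for delay equations controls this remainder and legitimizes replacing $z$ by the sum of the two propagated variation vectors up to $o(\eta_1)$.

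Finally I would superpose: for $p$ needles at distinct Lebesgue points $t_1<\dots<t_p$, the perturbations are asymptotically independent (the windows are disjoint for $\|(\eta_1,\dots,\eta_p)\|$ small, and cross terms between needles are $o(\sum\eta_i)$ by the same Gr\"onwall bound applied to the linearized flow), so the first-order increments add, giving $\sum_{i=1}^p \eta_i\big(\tilde v^\tau_{t_i,\omega^-_{u_i}(t_i)}(t)+\tilde v^\tau_{t_i+\tau^2,\omega^+_{u_i}(t_i)}(t)\big)$; combining with the $\delta$-term completes the expansion, with all remainders collected into $o(\delta+\sum_i\eta_i)$. I expect the main obstacle to be the bookkeeping of the delayed-control window: one must carefully check that on $[t_1+\tau^2,t_1+\tau^2+\eta_1)$ it is only the \emph{second} control argument of $\tilde f$ that is altered (not the first), that the state arguments there are frozen at the reference values up to negligible error, and that the resulting jump is correctly anchored at time $t_1+\tau^2$ as the initial datum for a fresh copy of the variational equation (\ref{dynVariation}); handling the cases $t_i+\tau^2\gtrless T_\tau$ and the convention that $u_\tau$ is extended by a constant on $[T_\tau,T_\tau+\tau^2]$ requires the indicator $\mathds{1}_{[0,T_\tau-\tau^2]}$ that later surfaces in the maximization condition (\ref{maxCond}).
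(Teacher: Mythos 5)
Your proposal is correct and follows essentially the same route as the paper, which proves the lemma by Gronwall-type induction as in the classical PMP proof, with the one new ingredient being exactly the extra integral contributions over the windows $[t_i+\tau^2,t_i+\tau^2+\eta_i)$ where the \emph{delayed} control slot sees $u_i$, producing the second jump $\eta_i\,\omega^+_{u_i}(t_i)$ anchored at $t_i+\tau^2$ and propagated by (\ref{dynVariation}). Your write-up is in fact more detailed than the paper's sketch, and correctly identifies the role of the Lebesgue-point hypothesis on $u_{\tau}(\cdot+\tau^2)$ and the constant extension of $u_\tau$ beyond $T_\tau$.
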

\begin{proof} This is done by induction on Gronwall inequalities as in the classical proof of the PMP. Unlike the classical approach, additional $p$ terms of type
\begingroup
\small
$$
\int_{t_i+\tau^2}^{t_i+\tau^2+\eta_i} \Big[ \tilde f(s,\tilde x^{\pi}_{\tau}(s),\tilde x^{\pi}_{\tau}(s-\tau^1),u_{\tau}(s),u_i) - f(s,\tilde x_{\tau}(s),\tilde x_{\tau}(s-\tau^1),u_{\tau}(s),u_{\tau}(s-\tau^2)) \Big] \; ds
$$
\endgroup
appear, producing further discontinuities in the derivative of the trajectory at $t_i+\tau^2$.
\end{proof}

It is useful to denote $\tilde w^{\tau}_{s,z}(t) := \tilde v^{\tau}_{s,\omega^-_z(s)}(t) + \tilde v^{\tau}_{s+\tau^2,\omega^+_z(s)}(t)$.
\begin{definition} \label{defCone}
For every $t \in (0,T_{\tau}]$, the first Pontryagin cone $\tilde K^{\tau}(t) \subseteq \mathbb{R}^{n+1}$ at $\tilde x_{\tau}(t)$ for the extended system is defined as the closed convex cone containing vectors $\tilde w^{\tau}_{s,z}(t)$ where $z \in \Omega$ and $0 < s < t$ is a Lebesgue point of $u_{\tau}(\cdot)$, $u_{\tau}(\cdot-\tau^2)$ and of $u_{\tau}(\cdot+\tau^2)$. The augmented first Pontryagin cone $\tilde K^{\tau}_1(t) \subseteq \mathbb{R}^{n+1}$ at $\tilde x_{\tau}(t)$ for the extended system is defined as the closed convex cone containing $\tilde f(t,\tilde x_{\tau}(t),\tilde x_{\tau}(t-\tau^1),u_{\tau}(t),u_{\tau}(t-\tau^2))$, $-\tilde f(t,\tilde x_{\tau}(t),\tilde x_{\tau}(t-\tau^1),u_{\tau}(t),u_{\tau}(t-\tau^2))$ and vectors $\tilde w^{\tau}_{s,z}(t)$ where $z \in \Omega$ and $0 < s < t$ is a Lebesgue point of $u_{\tau}(\cdot)$, $u_{\tau}(\cdot-\tau^2)$ and of $u_{\tau}(\cdot+\tau^2)$.

The first Pontryagin cone $K^{\tau}(t) \subseteq \mathbb{R}^{n}$ and the augmented first Pontryagin cone $K^{\tau}_1(t) \subseteq \mathbb{R}^{n}$ at $x_{\tau}(t)$ for the initial system are defined similarly, considering the initial dynamics $f$ instead of the extended dynamics $\tilde f$. Obviously, $K^{\tau}(t)$ and $K^{\tau}_1(t)$ are the projections on $\mathbb{R}^n$ respectively of $\tilde K^{\tau}(t)$ and $\tilde K^{\tau}_1(t)$.
\end{definition}
\subsubsection{The Maximum Principle}
The proof of the Maximum Principle using needle-like variations argues by contradiction using the definition of first Pontryagin cones and the following lemma, whose proof can be found in \cite{agrachev2013control}.
\begin{lemma}[Conic Implicit Function Theorem] \label{iFT}
Let $C \subseteq \mathbb{R}^m$ be a convex subset with non empty interior, of vertex 0, and $F : C \rightarrow \mathbb{R}^n$ be a Lipschitzian mapping such that $F(0) = 0$ and $F$ is G\^{a}teaux differentiable at 0. Assume that $dF(0)\cdot \textnormal{Cone}(C) = \mathbb{R}^n$, where $\textnormal{Cone}(C)$ stands for the convex cone generated by elements of $C$. Then, 0 belongs to the interior of $F(\mathcal{V} \cap C)$, for every neighborhood $\mathcal{V}$ of 0 in $\mathbb{R}^m$.
\end{lemma}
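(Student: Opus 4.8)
The plan is to reduce the statement to a finite-dimensional Brouwer degree argument along a simplex of admissible directions. Write $A:=dF(0)$. Since $A$ maps $\mathrm{Cone}(C)$ \emph{onto} $\mathbb R^{n}$ and $0\in C$, I can pick directions $v_{0},v_{1},\dots,v_{n}\in\mathrm{Cone}(C)$ with $Av_{i}=e_{i}$ for $i=1,\dots,n$ and $Av_{0}=-(e_{1}+\dots+e_{n})$; these $n+1$ vectors are affinely independent in $\mathbb R^{n}$ and contain $0$ in the interior of their convex hull (the barycentric coordinates of $0$ are all $1/(n+1)$). Write $v_{i}=t_{i}c_{i}$ with $t_{i}\geq0$, $c_{i}\in C$. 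For $\lambda$ in the standard $n$-simplex $\Delta:=\{\lambda\in\mathbb R^{n+1}:\lambda_{i}\geq0,\ \sum_{i}\lambda_{i}=1\}$ and $\varepsilon>0$ small enough that $\varepsilon\max_{i}t_{i}\leq1$, set the test point $x^{\varepsilon}_{\lambda}:=\varepsilon\sum_{i}\lambda_{i}v_{i}$; it lies in $C$ because $0\in C$ and $\varepsilon\max_i t_i\leq1$ (it is a convex combination of $0$ and a point of $C$). Then define $G_{\varepsilon}(\lambda):=\varepsilon^{-1}F(x^{\varepsilon}_{\lambda})$ and the affine model $G_{0}(\lambda):=\sum_{i}\lambda_{i}Av_{i}$.

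The crux is to show that $G_{\varepsilon}\to G_{0}$ uniformly on $\Delta$ as $\varepsilon\to0^{+}$. Writing $w=w(\lambda):=\sum_{i}\lambda_{i}v_{i}$, which ranges over the fixed compact polytope $P':=\mathrm{conv}\{v_{0},\dots,v_{n}\}$, one has $G_{\varepsilon}(\lambda)-G_{0}(\lambda)=g_{\varepsilon}(w(\lambda))$ with $g_{\varepsilon}(w):=\varepsilon^{-1}\bigl(F(\varepsilon w)-\varepsilon Aw\bigr)$. For each fixed $w$, $g_{\varepsilon}(w)\to0$ is exactly the definition of the G\^{a}teaux derivative $A=dF(0)$ together with $F(0)=0$; and the family $\{g_{\varepsilon}\}_{\varepsilon}$ is equi-Lipschitz on $P'$, with constant $\mathrm{Lip}(F)+\|A\|$ independent of $\varepsilon$, since $\|g_{\varepsilon}(w)-g_{\varepsilon}(w')\|\leq\varepsilon^{-1}\bigl(\mathrm{Lip}(F)\,\varepsilon\|w-w'\|+\|A\|\,\varepsilon\|w-w'\|\bigr)$. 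Pointwise convergence of an equicontinuous family on a compact set forces uniform convergence, so $\sup_{\Delta}\|G_{\varepsilon}-G_{0}\|\to0$. I expect this step --- converting the purely directional G\^{a}teaux information into a uniform estimate by leaning on the Lipschitz bound and compactness --- to be the main obstacle; the rest is soft.

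To finish, I would invoke Brouwer degree. By construction $G_{0}$ is an affine homeomorphism of $\Delta$ onto the simplex $P:=\mathrm{conv}\{Av_{0},\dots,Av_{n}\}$, hence $G_{0}(\partial\Delta)=\partial P$ and $\rho:=\frac{1}{2}\mathrm{dist}(0,\partial P)>0$. For $\|y\|<\rho$ and $\varepsilon$ small enough that $\sup_{\Delta}\|G_{\varepsilon}-G_{0}\|<\rho$, the segment homotopy $(1-t)G_{0}+tG_{\varepsilon}$ never attains the value $y$ on $\partial\Delta$, so $\deg(G_{\varepsilon},\mathrm{int}\,\Delta,y)=\deg(G_{0},\mathrm{int}\,\Delta,y)=\pm1\neq0$, which yields $\lambda\in\mathrm{int}\,\Delta$ with $G_{\varepsilon}(\lambda)=y$, i.e. $F(x^{\varepsilon}_{\lambda})=\varepsilon y$. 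Hence $B(0,\varepsilon\rho)\subseteq F(\varepsilon P')$. Now, given any neighborhood $\mathcal V$ of $0$, shrinking $\varepsilon$ further so that the bounded set $\varepsilon P'$ is contained in $\mathcal V$, we get $\varepsilon P'\subseteq\mathcal V\cap C$ and therefore $B(0,\varepsilon\rho)\subseteq F(\mathcal V\cap C)$, that is $0\in\mathrm{int}\,F(\mathcal V\cap C)$. (One could equally avoid degree theory: $G_{0}^{-1}$ is affine, hence Lipschitz on $P$, and for $\varepsilon$ small the map $\lambda\mapsto G_{0}^{-1}\bigl(y+G_{0}(\lambda)-G_{\varepsilon}(\lambda)\bigr)$ carries a suitable closed ball inside $\mathrm{int}\,\Delta$ into itself, so Brouwer's fixed point theorem supplies the required $\lambda$.)
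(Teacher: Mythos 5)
Your proof is correct, but be aware that the paper itself does not prove Lemma \ref{iFT}: it defers to \cite{agrachev2013control}, and the only argument of this type actually written out is the proof of the parametric version, Lemma \ref{iFTP}, in Appendix B. That proof takes a genuinely different route from yours: it extracts a vector $v$ in the open positive orthant with $dF(0)\cdot v=0$ and an $n$-dimensional subspace $W$ on which $dF(0)$ is an isomorphism, works on the affine slice $v+W\cap\bar B_{\delta}(0)$, and produces the preimage by a Banach fixed-point (contraction) argument. You instead pick $n+1$ preimages $v_{0},\dots,v_{n}$ of an affinely independent family surrounding the origin, rescale along the simplex they span, upgrade the pointwise G\^{a}teaux limit to a uniform one via the $\varepsilon$-independent Lipschitz constant $\mathrm{Lip}(F)+\|A\|$, and conclude by Brouwer degree (or Brouwer's fixed point theorem, as in your parenthetical). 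The trade-off is real: the contraction route needs the stronger (almost everywhere strict) differentiability that Lemma \ref{iFTP} explicitly assumes, since G\^{a}teaux differentiability at the single point $0$ gives no control on $F(u_{1})-F(u_{2})-dF(0)(u_{1}-u_{2})$ for distinct $u_{1},u_{2}$ near $0$; your degree argument runs under exactly the hypotheses of Lemma \ref{iFT} (Lipschitz plus G\^{a}teaux differentiability at $0$), at the price of being non-constructive. Your identification of the uniform convergence $G_{\varepsilon}\to G_{0}$ on the compact simplex --- pointwise radial convergence plus equi-Lipschitzianity --- as the crux is accurate; that is precisely what converts the directional information into the boundary estimate that homotopy invariance of the degree requires. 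Two cosmetic points: the affine independence you invoke is that of the images $Av_{0},\dots,Av_{n}$ (the matrix of differences $Av_{i}-Av_{0}$, $i\geq 1$, is $I+\mathbf{1}\mathbf{1}^{T}$, hence invertible), not of the $v_{i}$ themselves; and the degree is computed after identifying $\Delta$, which sits in the hyperplane $\{\sum_{i}\lambda_{i}=1\}\subseteq\mathbb{R}^{n+1}$, with an $n$-dimensional domain --- both routine.
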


Now, consider any integer $p \ge 1$ and a positive real number $\varepsilon_p > 0$. Define
$$
G^{\tau}_p : B_{\varepsilon_p}(0) \cap \mathbb{R} \times  \mathbb{R}^p_+ \rightarrow \mathbb{R}^{n+1} : (\delta,\eta_1,\dots,\eta_p) \mapsto \tilde x^{\pi}_{\tau}(T_{\tau} + \delta) - \tilde x_{\tau}(T_{\tau})
$$
where $\pi$ is a variation and $\varepsilon_p$ is small enough such that $G^{\tau}_p$ is well defined. We have:
\begin{itemize}
\item $G^{\tau}_p(0) = 0$ and $G^{\tau}_p$ is Lipschitz continuous;
\item $G^{\tau}_p$ is G\^{a}teaux differentiable at 0 (from Lemma \ref{needleLike});
\end{itemize}
Note that, the Lipschitz behavior of $G^{\tau}_p$ is proved by a recursive use of needle-like variations at $t_i - \varepsilon$, $1 \le i \le p$ (for $\varepsilon$ small enough), Lebesgue points of $u(\cdot)$. Since $t_i - \varepsilon$ are Lebesgue points of $u(\cdot)$ only for almost every $\varepsilon$, the recursive use of Lemma \ref{needleLike} can be done only almost everywhere. The conclusion follows from the continuity of $G^{\tau}_p$. The Maximum Principle is then established as follows.

Suppose, by contradiction, that $\tilde K^{\tau}_1(T_{\tau})$ coincides with $\mathbb{R}^{n+1}$. Then, by definition, there would exist an integer $p \ge 1$, a variation and a positive real number $\varepsilon_p > 0$ such that $d G^{\tau}_p(0)\cdot(\mathbb{R} \times \mathbb{R}^p_+) = \mathbb{R}^{n+1}$, and then Lemma \ref{iFT} would imply that the point $\tilde x_{\tau}(T_{\tau})$ belongs to the interior of the accessible set $\tilde{\mathcal{A}}_{\tau,\Omega}(T_{\tau})$, which contradicts Lemma \ref{interior}. Therefore:
\begin{lemma}
There exists $\tilde \psi_{\tau} \in \mathbb{R}^{n+1} \setminus \{ 0 \}$ (\textit{Lagrange multiplier}) such that
\begin{eqnarray*}
&\langle \tilde \psi_{\tau} , \tilde f(T_{\tau},\tilde x_{\tau}(T_{\tau}),\tilde x_{\tau}(T_{\tau}-\tau^1),u_{\tau}(T_{\tau}),u_{\tau}(T_{\tau}-\tau^2)) \rangle = 0 \medskip \\
&\langle \tilde \psi_{\tau} , \tilde v_{\tau} \rangle \le 0 \quad , \quad \forall \ \tilde v_{\tau} \in \tilde K^{\tau}(T_{\tau})
\end{eqnarray*}
\end{lemma}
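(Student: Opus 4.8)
The plan is to obtain the lemma directly from the contradiction argument sketched just above, supplemented by one conic separation step. I would organize it in three parts: (i) the augmented cone $\tilde K^\tau_1(T_\tau)$ cannot be all of $\mathbb{R}^{n+1}$; (ii) it therefore admits a nonzero supporting functional $\tilde\psi_\tau$ at $0$; (iii) the two stated relations are read off from the facts that $\tilde K^\tau_1(T_\tau)$ contains $\pm\tilde f(T_\tau,\dots)$ and contains $\tilde K^\tau(T_\tau)$.

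For part (i) I would argue by contradiction, assuming $\tilde K^\tau_1(T_\tau) = \mathbb{R}^{n+1}$. By Definition \ref{defCone}, this cone is the closed convex cone generated by $\pm\tilde f(T_\tau,\tilde x_\tau(T_\tau),\tilde x_\tau(T_\tau-\tau^1),u_\tau(T_\tau),u_\tau(T_\tau-\tau^2))$ and by the vectors $\tilde w^\tau_{s,z}(T_\tau)$ with $z\in\Omega$ and $s$ an admissible Lebesgue point. A Carath\'eodory-type argument then extracts a finite subfamily --- say $\pm\tilde f(T_\tau,\dots)$ together with $\tilde w^\tau_{t_1,u_1}(T_\tau),\dots,\tilde w^\tau_{t_p,u_p}(T_\tau)$ for suitable $0<t_1<\dots<t_p<T_\tau$ and $u_i\in\Omega$ --- whose convex conic hull is already $\mathbb{R}^{n+1}$. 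For the associated needle-like variation $\pi$ and the map $G^\tau_p$ introduced above, Lemma \ref{needleLike} shows that $G^\tau_p$ is G\^ateaux differentiable at $0$ with
$$
dG^\tau_p(0)\cdot(\delta,\eta_1,\dots,\eta_p) = \delta\,\tilde f(T_\tau,\tilde x_\tau(T_\tau),\tilde x_\tau(T_\tau-\tau^1),u_\tau(T_\tau),u_\tau(T_\tau-\tau^2)) + \sum_{i=1}^{p}\eta_i\,\tilde w^\tau_{t_i,u_i}(T_\tau),
$$
hence $dG^\tau_p(0)\cdot\textnormal{Cone}(\mathbb{R}\times\mathbb{R}^p_+) = \mathbb{R}^{n+1}$. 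Since moreover $G^\tau_p(0)=0$ and $G^\tau_p$ is Lipschitz continuous near $0$, Lemma \ref{iFT} yields $0\in\textnormal{int}\,G^\tau_p(\mathcal V\cap(\mathbb{R}\times\mathbb{R}^p_+))$ for every neighbourhood $\mathcal V$ of $0$. Translating by $\tilde x_\tau(T_\tau)$ and accounting for the free final time exactly as in \cite{kharatishvili1961maximum}, this places $\tilde x_\tau(T_\tau)$ in the interior of $\tilde{\mathcal A}_{\tau,\Omega}(T_\tau)$, contradicting Lemma \ref{interior}; so $\tilde K^\tau_1(T_\tau)\neq\mathbb{R}^{n+1}$.

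For part (ii), since $\tilde K^\tau_1(T_\tau)$ is a closed convex cone different from $\mathbb{R}^{n+1}$, there exists $\tilde\psi_\tau\in\mathbb{R}^{n+1}\setminus\{0\}$ with $\langle\tilde\psi_\tau,v\rangle\le 0$ for all $v\in\tilde K^\tau_1(T_\tau)$ --- otherwise the bipolar theorem for closed convex cones would force $\tilde K^\tau_1(T_\tau)=\mathbb{R}^{n+1}$. For part (iii), since $\tilde K^\tau_1(T_\tau)$ contains both $\tilde f(T_\tau,\tilde x_\tau(T_\tau),\tilde x_\tau(T_\tau-\tau^1),u_\tau(T_\tau),u_\tau(T_\tau-\tau^2))$ and its opposite, we get $\langle\tilde\psi_\tau,\tilde f(T_\tau,\dots)\rangle\le 0$ and $\langle\tilde\psi_\tau,-\tilde f(T_\tau,\dots)\rangle\le 0$, whence $\langle\tilde\psi_\tau,\tilde f(T_\tau,\dots)\rangle=0$; and since $\tilde K^\tau(T_\tau)\subseteq\tilde K^\tau_1(T_\tau)$, we obtain $\langle\tilde\psi_\tau,\tilde v_\tau\rangle\le 0$ for every $\tilde v_\tau\in\tilde K^\tau(T_\tau)$, which is the claimed statement.

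The delicate step is (i), not the convex-analytic separation in (ii)--(iii). Two points require care: first, the reduction of the hypothesis "$\tilde K^\tau_1(T_\tau)=\mathbb{R}^{n+1}$" to the existence of a finite subfamily of generators whose convex cone is $\mathbb{R}^{n+1}$ (equivalently, with $0$ in the interior of their convex hull); second, the verification that $G^\tau_p$ satisfies the hypotheses of Lemma \ref{iFT}, in particular its Lipschitz continuity near $0$, which does not follow directly from Lemma \ref{needleLike} (whose expansion holds only at Lebesgue points) but must be recovered by applying that lemma recursively at shifted times $t_i-\varepsilon$ for almost every $\varepsilon$ and passing to the limit using the continuity of $G^\tau_p$. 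One must also keep track of the free-time component $\delta$ so that the interiority conclusion genuinely contradicts Lemma \ref{interior}.
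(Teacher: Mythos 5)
Your proposal is correct and follows essentially the same route as the paper: a contradiction argument showing $\tilde K^{\tau}_1(T_{\tau})\neq\mathbb{R}^{n+1}$ via a finite extraction of generators, the map $G^{\tau}_p$ (whose Lipschitz continuity is recovered, exactly as you note, by recursive needle-like variations at $t_i-\varepsilon$ for almost every $\varepsilon$ and continuity), Lemma \ref{iFT}, and Lemma \ref{interior}, followed by the standard separation of a proper closed convex cone and reading off the two relations from $\pm\tilde f\in\tilde K^{\tau}_1(T_{\tau})$ and $\tilde K^{\tau}(T_{\tau})\subseteq\tilde K^{\tau}_1(T_{\tau})$. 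No gaps beyond the ones you already flag and resolve.
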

These relations permit to derive (in the usual way, see, e.g. \cite{pontryagin1987mathematical}) the necessary conditions (\ref{dynDual})-(\ref{freeT}) given in the first section. The relation with the above Lagrange multiplier $\tilde \psi_{\tau} = (\psi_{\tau},\psi^0_{\tau})$ is that the adjoint vector is constructed so that $p_{\tau}(T_{\tau}) = \psi_{\tau}$, $p^0_{\tau} = \psi^0_{\tau}$.

From the previous considerations, the following useful result follows
\begin{lemma} \label{emmanuelPlane}
Suppose that the final time $T_{\tau}$ of (\textbf{OCP})$_{\tau}$ is free. For the optimal trajectory $x_{\tau}(\cdot)$ the following statements are equivalent:
\begin{itemize}
\item The trajectory $x_{\tau}(\cdot)$ has an unique extremal lift $(x_{\tau}(\cdot),p_{\tau}(\cdot),p^0_{\tau},u_{\tau}(\cdot))$ whose adjoint $(p_{\tau}(\cdot),p^0_{\tau})$ is unique up to a multiplicative scalar, which is moreover normal i.e., $p^0_{\tau} < 0$;
\item $\tilde K^{\tau}_1(T_{\tau})$ is a half-space of $\mathbb{R}^{n+1}$ and $K^{\tau}_1(T_{\tau}) = \mathbb{R}^n$.
\end{itemize}
\end{lemma}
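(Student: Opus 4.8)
The plan is to recast both conditions of the statement in terms of convex cones and their polars, and then to read off the equivalence by elementary convex analysis. Throughout, for a closed convex cone $C\subseteq\mathbb{R}^N$ I write $C^{\circ}=\{\psi\in\mathbb{R}^N : \langle\psi,v\rangle\le 0 \text{ for all } v\in C\}$ for its polar cone, and I use three standard facts: (i) if $C\subsetneq\mathbb{R}^N$ then $C^{\circ}\neq\{0\}$ (Hahn--Banach separation of $C$ from an exterior point, together with the cone property); (ii) $(C^{\circ})^{\circ}=C$; (iii) $C$ is a closed half-space if and only if $C^{\circ}$ is a half-line.

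First I would make precise the correspondence between extremal lifts of $x_{\tau}(\cdot)$ and Lagrange multipliers. By the argument preceding the statement, $\tilde K^{\tau}_1(T_{\tau})\subsetneq\mathbb{R}^{n+1}$ (otherwise Lemma~\ref{iFT} would put $\tilde x_{\tau}(T_{\tau})$ in the interior of $\tilde{\mathcal A}_{\tau,\Omega}(T_{\tau})$, contradicting Lemma~\ref{interior}), hence $\tilde K^{\tau}_1(T_{\tau})$ is a proper closed convex cone and $\tilde K^{\tau}_1(T_{\tau})^{\circ}\neq\{0\}$. Since the augmented cone contains both $\pm\tilde f(T_{\tau},\tilde x_{\tau}(T_{\tau}),\tilde x_{\tau}(T_{\tau}-\tau^1),u_{\tau}(T_{\tau}),u_{\tau}(T_{\tau}-\tau^2))$, a nonzero vector $\tilde\psi_{\tau}$ satisfies the two conclusions of the Lagrange-multiplier lemma just established (orthogonality to the dynamics and nonpositivity on $\tilde K^{\tau}(T_{\tau})$) exactly when $\tilde\psi_{\tau}\in\tilde K^{\tau}_1(T_{\tau})^{\circ}\setminus\{0\}$. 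Moreover, once the optimal state and control are frozen, the adjoint equations (\ref{dynDual}) are \emph{linear} in $(p_{\tau}(\cdot),p^0_{\tau})$ and well posed when integrated backward from $T_{\tau}$; hence the terminal datum $(p_{\tau}(T_{\tau}),p^0_{\tau})=\tilde\psi_{\tau}$ determines the whole adjoint. This yields a bijection between rays of $\tilde K^{\tau}_1(T_{\tau})^{\circ}\setminus\{0\}$ and extremal lifts of $x_{\tau}(\cdot)$ up to multiplicative scalar, under which the abnormal lifts ($p^0_{\tau}=0$) are precisely the multipliers of the form $(\psi_{\tau},0)$.

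Next I would relate the last coordinate to the projected cone. With $\pi:\mathbb{R}^{n+1}\to\mathbb{R}^n$ the projection onto the first $n$ coordinates, the identity $\langle(\psi,0),(v,v^0)\rangle=\langle\psi,v\rangle$ shows that $(\psi,0)\in\tilde K^{\tau}_1(T_{\tau})^{\circ}$ if and only if $\psi\in(\pi(\tilde K^{\tau}_1(T_{\tau})))^{\circ}=K^{\tau}_1(T_{\tau})^{\circ}$, the last equality because polars depend only on closures and $K^{\tau}_1(T_{\tau})$ is the projected cone. Hence an abnormal multiplier exists if and only if $K^{\tau}_1(T_{\tau})^{\circ}\neq\{0\}$, that is, if and only if $K^{\tau}_1(T_{\tau})\neq\mathbb{R}^n$. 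Assembling the pieces: if $\tilde K^{\tau}_1(T_{\tau})$ is a half-space then its polar is a half-line, so the multiplier — and therefore the extremal lift — is unique up to a positive scalar, and if moreover $K^{\tau}_1(T_{\tau})=\mathbb{R}^n$ there is no abnormal multiplier, so $p^0_{\tau}\neq 0$, which combined with $p^0_{\tau}\le 0$ (from the Maximum Principle) gives $p^0_{\tau}<0$; conversely, if the lift is unique up to scalar and normal, then $\tilde K^{\tau}_1(T_{\tau})^{\circ}$ is a half-line and $\tilde K^{\tau}_1(T_{\tau})=(\tilde K^{\tau}_1(T_{\tau})^{\circ})^{\circ}$ is a half-space, while if $K^{\tau}_1(T_{\tau})$ were not all of $\mathbb{R}^n$ there would exist an abnormal multiplier $(\psi_{\tau},0)$ with $\psi_{\tau}\neq 0$, which cannot be a positive multiple of the normal $\tilde\psi_{\tau}$ (whose last coordinate is negative), contradicting uniqueness; hence $K^{\tau}_1(T_{\tau})=\mathbb{R}^n$.

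I expect the real work to lie in the bookkeeping rather than in any deep idea: one must check carefully that the assignment ``terminal multiplier $\mapsto$ adjoint curve'' is a genuine bijection up to scalar (this rests on the linearity and backward well-posedness of (\ref{dynDual})), keep the sign convention $p^0_{\tau}\le 0$ coherent so that abnormality is correctly matched with a normal direction of the projected cone, and handle the fact that the image of a closed convex cone under a linear projection need not be closed — which is harmless here because only polars, hence only closures, enter the argument.
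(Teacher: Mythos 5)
The paper does not actually write out a proof of this lemma (it is asserted to follow ``from the previous considerations''), so your polar-cone argument is supplying the standard proof the authors have in mind: identify extremal lifts of $x_{\tau}(\cdot)$ with nonzero elements of $\tilde K^{\tau}_1(T_{\tau})^{\circ}$ via backward integration of the linear adjoint equation, characterize abnormal multipliers as those of the form $(\psi_{\tau},0)$, i.e.\ via $K^{\tau}_1(T_{\tau})^{\circ}$, and translate both bullet points through the bipolar theorem. The implication ``half-space and $K^{\tau}_1(T_{\tau})=\mathbb{R}^n$ $\Rightarrow$ unique normal lift'' and the sub-implication ``unique normal lift $\Rightarrow$ no abnormal multiplier $\Rightarrow$ $K^{\tau}_1(T_{\tau})=\mathbb{R}^n$'' (the only part of the lemma the paper actually invokes, in the controllability step) are correct as you wrote them, and your remarks about closedness of projected cones being immaterial for polars are also right.

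There is, however, a gap in the remaining step: from ``the lift is unique up to a multiplicative scalar and normal'' you conclude that $\tilde K^{\tau}_1(T_{\tau})^{\circ}$ is a half-line. What the hypothesis gives you directly is only that $\tilde K^{\tau}_1(T_{\tau})^{\circ}$ is contained in a single line $\mathbb{R}\tilde\psi_{\tau}$; it could a priori be the \emph{whole} line, in which case $\tilde K^{\tau}_1(T_{\tau})=(\tilde K^{\tau}_1(T_{\tau})^{\circ})^{\circ}$ is the hyperplane $\tilde\psi_{\tau}^{\perp}$, not a half-space. The opposite multiplier $-\tilde\psi_{\tau}$ does not produce a second, non-proportional extremal lift --- it is either excluded by the sign convention $p^0_{\tau}\le 0$ built into the definition of an extremal, or it is simply a scalar multiple of $\tilde\psi_{\tau}$ --- so uniqueness up to scalar is not violated, and your claimed ``bijection between rays of the polar and lifts up to scalar'' silently collapses two antipodal rays onto one lift at exactly this point. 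To close the argument you must rule out that $\tilde K^{\tau}_1(T_{\tau})$ is contained in a hyperplane with non-horizontal normal (equivalently, that every variation vector satisfies the maximization inequality with equality while the cone still projects onto all of $\mathbb{R}^n$). This is a degenerate, totally singular configuration, but nothing in the stated hypotheses, nor in the paper's definition of $\tilde K^{\tau}_1$ --- which, unlike the classical augmented cone, does not include the cost-increase direction $(0,\dots,0,1)$ among its generators --- excludes it, so the step as written does not follow.
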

\subsection{Proof of the Main Result}
The proof exploits the results introduced previously. Unfortunately, Lemma \ref{iFT} does not take into account the dependence w.r.t. the delay $\tau$ and a more general version of this lemma, depending on parameters, must be introduced.

As pointed out previously, when the delay varies, Lemma \ref{needleLike} can be applied only to almost every $\tau$. This obliges to introduce a notion of conic implicit function theorem depending on parameters on dense subsets.

A function $f : C \subseteq \mathbb{R}^n \rightarrow \mathbb{R}^m$ is said \textit{almost everywhere strictly differentiable at some point} $x_0 \in C$ whenever there exists a linear continuous mapping $df(x_0) : \mathbb{R}^n \rightarrow \mathbb{R}^m$, such that
$$
f(y) - f(x) = df(x_0)\cdot(y - x) + \| y - x \|g(x,y)
$$
for almost every $x,y \in C$, where $g(x,y) \rightarrow 0$ as $\| x - x_0 \| + \| y - x_0 \| \xrightarrow{a.e.} 0$.

\begin{lemma} \label{iFTP}
Let $C \subseteq \mathbb{R}^m$ be an open convex subset with non empty interior, of vertex 0, and $F : \mathbb{R}^2_+ \times C \rightarrow \mathbb{R}^n : (\varepsilon^1,\varepsilon^2,x) \mapsto F(\varepsilon^1,\varepsilon^2,x)$ be a continuous mapping satisfying the following assumptions:
\begin{itemize}
\item $F(0,0,0) = 0;$
\item For almost every $\varepsilon^1,\varepsilon^2 \ge 0$, $F$ is almost everywhere strictly differentiable w.r.t. $x$ at 0, and $\displaystyle \frac{\partial F}{\partial x}(\varepsilon^1,\varepsilon^2,0)$ is approximately continuous w.r.t. $(\varepsilon^1,\varepsilon^2)$;
\item $\displaystyle \frac{\partial F}{\partial x}(0,0,0)\cdot\textnormal{Cone}(C) = \mathbb{R}^n$.
\end{itemize}
Then, there exists $\varepsilon_0 > 0$, a neighborhood $V$ of 0 in $\mathbb{R}^n$, and a continuous function $h : [0,\varepsilon_0)^2 \times V \rightarrow \mathbb{R}^m$ with values in $C$, such that
$$
F(\varepsilon^1,\varepsilon^2,h(\varepsilon^1,\varepsilon^2,y)) = y
$$
for every $(\varepsilon^1,\varepsilon^2) \in [0,\varepsilon_0)^2$ and every $y \in V$.
\end{lemma}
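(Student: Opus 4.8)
The plan is to follow the scheme of the (non‑parametric) conic implicit function theorem, Lemma \ref{iFT} from \cite{agrachev2013control}, but carrying the parameters $(\varepsilon^1,\varepsilon^2)$ through the whole construction and, crucially, compensating for the fact that the differentiability hypotheses hold only on full‑measure sets by systematically exploiting the genuine continuity of $F$ to control what happens off these null sets. The first step is a finite‑dimensional reduction. Write $L:=\frac{\partial F}{\partial x}(0,0,0)$. Since $L\cdot\mathrm{Cone}(C)=\mathbb{R}^n$ and $C$ has nonempty interior, one may select $c_1,\dots,c_{n+1}\in C$ such that $v_i:=Lc_i$ satisfy $0\in\mathrm{int}\,\mathrm{conv}\{v_1,\dots,v_{n+1}\}$, and after rescaling the $c_i$ by a small positive factor (legitimate since $C$ is a cone of vertex $0$) one may take them arbitrarily small. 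Fixing weights $\bar\mu_i>0$ with $\sum_i\bar\mu_i v_i=0$ together with a fixed linear right inverse $\zeta\mapsto(\alpha_i(\zeta))$ of $(\alpha_i)\mapsto\sum_i\alpha_i v_i$, the affine map $R(\zeta):=\sum_i(\bar\mu_i+\alpha_i(\zeta))c_i$ satisfies $L\,R(\zeta)=\zeta$; moreover, for the rescaling factor and then $\|\zeta\|$ chosen small enough, all its coefficients are positive and sum to a number $\le 1$, so that $R(\zeta)$ is a sub‑convex‑combination of the $c_i$ and hence lies in $C$, staying in an arbitrarily small neighbourhood of $0$. Thus $R$ is a continuous $n$‑dimensional "chart" into $C$ around $0$ inverting the linear part $L$.

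Next I would solve $F(\varepsilon,x)=y$ by a contraction argument on the good set of parameters. Let $N\subseteq\mathbb{R}^2_+$ be the full‑measure set where $F$ is a.e.\ strictly differentiable in $x$ at $0$ and where $\frac{\partial F}{\partial x}(\varepsilon,0)$ is approximately continuous; by approximate continuity $\frac{\partial F}{\partial x}(\varepsilon,0)\to L$ as $N\ni\varepsilon\to 0$. Composing the a.e.\ strict expansion of $F$ at $0$ with $R$ and using $L\,DR=\mathrm{id}$, one obtains, for $\varepsilon\in N$ small and $\zeta,\zeta'$ small, $F(\varepsilon,R(\zeta))-F(\varepsilon,R(\zeta'))=(\zeta-\zeta')+o(\|\zeta-\zeta'\|)$ with remainder uniform in $\varepsilon$ (here one uses that, by choosing the rescaling small first, the image $R(B(0,r))$ lies inside the neighbourhood of $0$ where the strict expansion is effective). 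Consequently $\zeta\mapsto T_{\varepsilon,y}(\zeta):=\zeta-F(\varepsilon,R(\zeta))+y$ is a uniform contraction of a fixed small closed ball $\overline B(0,r)\subseteq\mathbb{R}^n$ for $\varepsilon\in N$ and $y$ in a neighbourhood $V$ of $0$; its unique fixed point $\zeta^*(\varepsilon,y)$ satisfies $F(\varepsilon,R(\zeta^*(\varepsilon,y)))=y$ and depends continuously on $(\varepsilon,y)$ by the standard continuous‑dependence estimate for contractions. Setting $h(\varepsilon,y):=R(\zeta^*(\varepsilon,y))\in C$ yields the desired section on $N\times V$. (If one only wanted existence, not a continuous section, a topological‑degree argument exactly as in the proof of Lemma \ref{iFT} would suffice.)

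Finally I would extend $h$ to all of $[0,\varepsilon_0)^2\times V$. For $\bar\varepsilon\in[0,\varepsilon_0)^2$ and $y\in V$, choose $N\ni\varepsilon_k\to\bar\varepsilon$; the points $\zeta^*(\varepsilon_k,y)$ lie in the compact ball $\overline B(0,r)$, hence converge along a subsequence to some $\zeta$, and by continuity of $F$ and $R$ one gets $F(\bar\varepsilon,R(\zeta))=y$. The solution of $F(\varepsilon,R(\cdot))=y$ in $\overline B(0,r)$ is in fact unique for \emph{every} parameter, because the contraction estimate for $T_{\varepsilon,y}$ is stable under $C^0$‑perturbation of $F$ and therefore persists (with a slightly worse constant) at $\bar\varepsilon$ even where strict differentiability fails; this makes the limit $\zeta$ independent of the subsequence, so one may set $h(\bar\varepsilon,y):=R(\zeta)$, and the same uniqueness together with a diagonal argument gives continuity of the extended $h$ on all of $[0,\varepsilon_0)^2\times V$, with values in $C$.

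The main obstacle I anticipate is precisely this tension between the pointwise conclusion sought — a genuinely continuous $h$ defined at \emph{every} parameter and satisfying the equation everywhere — and the merely almost‑everywhere nature of the hypotheses: the linearization and the contraction can only be run at parameters in $N$ and only "up to null sets in $x$", and the statement at the exceptional parameters must be recovered afterwards through density, compactness of $\overline B(0,r)$, continuity of $F$, and robustness of the contraction estimate under $C^0$‑perturbation. The secondary point requiring care is the bookkeeping of the cone manipulations (choosing the rescaling factor and then $r$ so that the sub‑convex‑combinations $R(\zeta)$ remain in $C$ and inside the region where the a.e.\ strict expansion is quantitatively effective); everything else is a routine adaptation of the proof of Lemma \ref{iFT}.
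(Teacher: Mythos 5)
Your proposal is correct and follows essentially the same route as the paper's proof: an affine finite-dimensional section of the cone inverting $\frac{\partial F}{\partial x}(0,0,0)$ (your chart $R(\zeta)=\sum(\bar\mu_i+\alpha_i(\zeta))c_i$ is a cosmetic variant of the paper's $v+\ell|_W^{-1}(u)$, with $v$ an interior kernel direction and $W$ a complementary subspace), followed by a parametric Banach fixed point whose contraction constant is controlled via approximate continuity of $\frac{\partial F}{\partial x}(\varepsilon^1,\varepsilon^2,0)$, and finally recovery of the exceptional (null-set) parameters through continuity of $F$. The extra care you devote to extending the solution off the full-measure set $N$ is exactly what the paper compresses into its ``by continuity'' step.
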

The proof of Lemma \ref{iFTP} is done in Appendix \ref{appConic}.

From now on, assume that assumptions $(A_1)$, $(A_2)$, $(A_3)$ and $(A_4)$ hold and we summon the other assumptions of Theorem \ref{theoMain} when they are needed. Moreover, in the sequel $(x(\cdot),u(\cdot))$ denotes the (unique) solution of (\textbf{OCP}) and we assume that its related final time $T$ is a Lebesgue point of $u(\cdot)$ (if not, we refer the reader to the approach proposed by \cite{gamkrelidze2013principles}) and it is free (otherwise, the proof is similar but simpler). We divide the proof in several main subparts.
\subsubsection{Controllability} \label{controllability}
For any integer $p \ge 1$, fix $0<t_1<\dots<t_p<T$ Lebesgue points of control $u(\cdot)$ and $p$ arbitrary values $u_i \in \Omega$. We denote $v|_n$ the first $n$ coordinates of a vector $v \in \mathbb{R}^{n+1}$. For an appropriate small positive real number $\varepsilon_p > 0$, denoting by $\tilde x_{(\varepsilon^1,\varepsilon^2)}(\cdot)$ the trajectory solution of (\ref{dynDelayAug}) with control $u_{(\varepsilon^1,\varepsilon^2)}(\cdot) = u(\cdot)$, we define the mapping
$$
\Gamma : B_{\varepsilon_p}(0) \cap (\mathbb{R}^2_+ \times \mathbb{R} \times \mathbb{R}^p_+) \rightarrow \mathbb{R}^n : (\varepsilon^1,\varepsilon^2,\delta,\eta_1,\dots,\eta_p) \mapsto [ \tilde x^{\pi}_{(\varepsilon^1,\varepsilon^2)}(T + \delta) - \tilde x(T) ]|_n
$$
Thanks to Assumption $(A_2)$ and by continuity w.r.t. initial data, $\Gamma$ is well defined and continuous. Moreover, $\Gamma(0,\dots,0) = 0$ and we note that
$$
\Gamma(\varepsilon^1,\varepsilon^2,\delta,\eta_1,\dots,\eta_p) = [ \tilde x^{\pi}_{(\varepsilon^1,\varepsilon^2)}(T + \delta) - \tilde x_{(\varepsilon^1,\varepsilon^2)}(T) ]|_n + [ \tilde x_{(\varepsilon^1,\varepsilon^2)}(T) - \tilde x(T) ]|_n
$$
From Lemma \ref{needleLike} and a recursive use of needle-like variations, it follows that, for almost every $(\varepsilon^1,\varepsilon^2) \in \mathbb{R}^2_+$ small enough, $\Gamma$ is almost everywhere strictly differentiable w.r.t. $(\delta,\eta_1,\dots,\eta_p)$ at 0 and $\displaystyle \frac{\partial \Gamma}{\partial (\delta,\eta_1,\dots,\eta_p)}(\varepsilon^1,\varepsilon^2,0)$ is approximately continuous w.r.t. $(\varepsilon^1,\varepsilon^2)$. Finally, from Assumption $(A_3)$, the unique extremal lift of $x(\cdot)$ is normal, hence, it follows from Lemma \ref{emmanuelPlane} (applied to the non-delayed case) that $K^{0}_1(T) = \mathbb{R}^n$. Therefore, exploiting the particular form of the dynamics of (\textbf{OCP})$_{\tau}$, there exist a real number $\delta$, an integer $p \le 1$ and a variation $\pi = \{ t_1,\dots,t_p,\eta_1,\dots,\eta_p,u_1,\dots,u_p \}$ such that the associated mapping $\Gamma$ satisfies
$$
\frac{\partial \Gamma}{\partial (\delta,\eta_1,\dots,\eta_p)}(0,0,0)\cdot(\mathbb{R} \times \mathbb{R}^p_+) = K^{0}_1(T) = \mathbb{R}^n \quad .
$$

Then, Lemma \ref{iFTP} implies that there exists $\tau_0 > 0$ such that, for every $\tau = (\tau^1,\tau^2) \in [0,\tau_0)^2$, there exist a real number $\delta_{\tau}$ and positive real numbers $\eta^{\tau}_1,\dots,\eta^{\tau}_p$ such that $\Gamma(\tau^1,\tau^2,\delta_{\tau},\eta^{\tau}_1,\dots,\eta^{\tau}_p) = 0$. Moreover, $\delta_{\tau}$ is small whenever $\tau_0$ is small enough. From Assumption $(A_4)$, it follows that the subset $M$ is reachable for the control system (\ref{dynDelay}), in a final time $T_{\tau} \in [0,b]$, using control $u^{\pi}_{(\tau^1,\tau^2)}(\cdot) \in L^{\infty}([0,T_{\tau}],\Omega)$.

We have proved that, for every $\tau = (\tau^1,\tau^2) \in (0,\tau_0)^2$, (\textbf{OCP})$_{\tau}$ is controllable. Note that the previous argument still holds for optimal control problems (\textbf{OCP})$_{\tau}$ with pure state delays $\tau = (\tau^1,0)$.
\subsubsection{Existence of Optimal Controls} \label{existenceSect}
Now, we focus on the existence of an optimal control for (\textbf{OCP})$_{\tau}$, for every  $\tau = (\tau^1,\tau^2) \in (0,\tau_0)^2$.

First, we focus on the case where $f$ and $f^0$ are affine in the two control variables. No other assumptions but $(A_1)$-$(A_4)$ are considered. Notice that, in this case, the existence can be achieved by using the arguments in \cite[Theorem 2]{myACC2017}. However, we prefer to develop the usual Filippov's scheme (following \cite{trelat2008controle}) to highlight the difficulty in applying this scheme to more general dynamical systems (see Remark \ref{remarkGuinn}).

Fix $\tau = (\tau^1,\tau^2) \in (0,\tau_0)^2$ and let
$$
\alpha = \inf_{u_{\tau}(\cdot) \in \mathcal{U}^{\tau}_{\Omega}} C_{T_{\tau}(u_{\tau})}(\tau,u_{\tau}(\cdot)) = \int_{0}^{T_{\tau}(u_{\tau})} f^0(t,x_{\tau}(t),x_{\tau}(t-\tau^1),u_{\tau}(t),u_{\tau}(t-\tau^2)) \; dt
$$
Consider now a minimizing sequence of trajectories $x_{\tau,k}(\cdot)$ associated to $u_{\tau,k}(\cdot)$ i.e., $C_{T_{\tau}(u_{\tau,k})}(\tau,u_{\tau,k}(\cdot)) \rightarrow \alpha$ when $k \rightarrow \infty$ and define $\tilde F_k(t) := \tilde f(t,x_{\tau,k}(t),x_{\tau,k}(t-\tau^1),u_{\tau,k}(t),u_{\tau,k}(t-\tau^2))$ for almost every $t \in [0,T_{\tau}(u_{\tau,k})]$. By Assumption $(A_4)$, we can extend $\tilde F_k(\cdot)$ by zero on $(T_{\tau}(u_{\tau,k}),b]$ so that $(\tilde F_k(\cdot))_{k \in \mathbb{N}}$ is bounded in $L^{\infty}([0,b],\mathbb{R}^{n+1})$. Then, up to some subsequence, $(\tilde F_k(\cdot))_{k \in \mathbb{N}}$ converges to some $\tilde F(\cdot) = (F(\cdot),F^0(\cdot)) \in L^{\infty}([0,b],\mathbb{R}^{n+1})$ for the weak star topology of $L^{\infty}$. Up to some subsequence, the sequence $(T_{\tau}(u_{\tau,k}))_{k \in \mathbb{N}}$ converges to some $\bar T \ge 0$. Now, we define
$$
y(t) := \phi^1(t) \mathds{1}_{[-\Delta,0)}(t) + \mathds{1}_{[0,\bar T]}(t) \bigg[ \phi^1(0) + \int_{0}^{t} F(s) \; ds \bigg]
$$
for every $t \in [-\Delta,\bar T]$. Clearly, $y(\cdot)$ is absolutely constinuous and, taking continuous extensions, $(x_{\tau,k}(\cdot))_{k \in \mathbb{N}}$ converges pointwise to $y(\cdot)$ within $[-\Delta,\bar T]$. Firstly, we show that $y(\cdot)$ comes from an admissible control in $\mathcal{U}^{\tau}_{\bar T,\Omega}$.

For almost every $t \in [0,T_{\tau}(u_{\tau,k})]$, set $\tilde h_k(t) := \tilde f(t,y(t),y(t-\tau^1),u_{\tau,k}(t),u_{\tau,k}(t-\tau^2))$ and, if $T_{\tau}(u_{\tau,k}) + \tau^2 < \bar T$, extend it by 0 on $ (T_{\tau}(u_{\tau,k}),b]$. Now, we introduce several structures to deal with the presence of the control delay $\tau^2$. First, let
$$
\beta := \max \{ |f^0(t,x,y,u,v)| : 0 \le t \le b , \| (x,y) \| \le b , (u,v) \in \Omega^2 \} > 0
$$
Let $N \in \mathbb{N}$ such that $N \tau^2 \le \bar T < (N+1) \tau^2$. Taking continuous extensions, we clearly see that $y(\cdot)$ is defined on $[-\Delta,(N+1) \tau^2]$. Then, a.e. in $[0,\tau^2]$, we define
\begingroup
\tiny
\begin{equation} \label{redGuinn}
G(t,u^1,\dots,u^{N+1},\gamma^1,\dots,\gamma^{N+1}) = \left( \begin{array}{c}
f(t,y(t),y(t-\tau^1),u^1,\phi^2(t-\tau^2)) \\
f^0(t,y(t),y(t-\tau^1),u^1,\phi^2(t-\tau^2)) + \gamma^1 \\
f(t+\tau^2,y(t+\tau^2),y(t+\tau^2-\tau^1),u^2,u^1) \\
f^0(t+\tau^2,y(t+\tau^2),y(t+\tau^2-\tau^1),u^2,u^1) + \gamma^2 \\
\dots \\
f(t+N\tau^2,y(t+N\tau^2),y(t+N\tau^2-\tau^1),u^{N+1},u^N) \\
f^0(t+N\tau^2,y(t+N\tau^2),y(t+N\tau^2-\tau^1),u^{N+1},u^N) + \gamma^{N+1}
\end{array} \right)
\end{equation}
\endgroup
and
\begingroup
\footnotesize
\begin{eqnarray*}
\tilde V_{\beta}(t) := \bigg\{ &G(t,u^1,\dots,u^{N+1},\gamma^1,\dots,\gamma^{N+1}) : (u^1,\dots,u^{N+1}) \in \Omega^{N+1} , \forall i=1, \ldots, N+1 : \gamma^i \geq 0, \medskip \\
&|f^0(t,y(t),y(t-\tau^1),u^1,\phi^2(t-\tau^2)) + \gamma^1| \leq \beta,  \medskip \\
&\forall i=1, \ldots, N : |f^0(t+i\tau^2,y(t+i\tau^2),y(t+i\tau^2-\tau^1),u^{i+1},u^{i}) + \gamma^{i+1}| \leq \beta \bigg\}
\end{eqnarray*}
\endgroup
By Assumption $(A_1)$, $\tilde V_{\beta}(t)$ is compact for the standard topology of $\mathbb{R}^{(n+1)(N+1)}$. Moreover, it is not difficult to see that the assumption of affine dynamics and cost and $(A_1)$ ensure that $\tilde V_{\beta}(t)$ is convex for the same topology. Finally, we introduce
$$
\tilde{\mathcal{V}} := \{ \tilde g(\cdot) \in L^2([0,\tau^2],\mathbb{R}^{(n+1)(N+1)}) : \tilde g(t) \in \tilde V_{\beta}(t) , \textnormal{ a.e. } [0,\tau^2] \}
$$
Now, for every $i=0,\dots,N$, we denote
$$
\tilde g^{i+1}_k(t) := \tilde f(t+i\tau^2,y(t+i\tau^2),y(t+i\tau^2-\tau^1),u_{\tau,k}(t+i\tau^2),u_{\tau,k}(t+(i-1)\tau^2))
$$
and $\tilde g_k(t) = (\tilde g^{1}_k(t),\dots,\tilde g^{N+1}_k(t))$. Then $\tilde g_k(\cdot) \in \tilde{\mathcal{V}}$, for every $k \in \mathbb{N}$ and $\tilde{\mathcal{V}}$ is convex and closed in $L^2([0,\tau^2],\mathbb{R}^{(n+1)(N+1)})$ for the strong topology of $L^2$. From the second statement, it follows that $\tilde{\mathcal{V}}$ is convex and closed in $L^2([0,\tau^2],\mathbb{R}^{(n+1)(N+1)})$ for the weak topology of $L^2$. Since $(\tilde g_k(\cdot))_{k \in \mathbb{N}}$ is bounded in $L^2([0,\tau^2],\mathbb{R}^{(n+1)(N+1)})$, up to some subsequence, it converges for the weak topology of $L^2$ to a function $\tilde g(\cdot)$ that necessarily belongs to $\tilde{\mathcal{V}}$. We obtain that, for almost every $t \in [0,\tau^2]$ and $i=1,\dots,N+1$, there exist $\bar u^i_{\tau}(t) \in \Omega$ and $\bar \gamma^i_{\tau}(t) \geq 0$ such that
\begingroup
$$
\tilde g^{1}(t) := \left( \begin{array}{c}
f(t,y(t),y(t-\tau^1),\bar u^{1}_{\tau}(t),\phi^2(t-\tau^2)) \\
f^0(t,y(t),y(t-\tau^1),\bar u^{1}_{\tau}(t),\phi^2(t-\tau^2)) + \bar \gamma^1_{\tau}(t)
\end{array} \right)
$$
\endgroup
and, for every $i=1,\dots,N$,
\begingroup
$$
\tilde g^{i+1}(t) := \left( \begin{array}{c}
f(t+i\tau^2,y(t+i\tau^2),y(t+i\tau^2-\tau^1),\bar u^{i+1}_{\tau}(t),\bar u^{i}_{\tau}(t)) \\
f^0(t+i\tau^2,y(t+i\tau^2),y(t+i\tau^2-\tau^1),\bar u^{i+1}_{\tau}(t),\bar u^{i}_{\tau}(t)) + \bar \gamma^{i+1}_{\tau}(t)
\end{array} \right)
$$
\endgroup
Moreover, since $\Omega$ is compact, functions $\bar u^i_{\tau}(\cdot)$, $\bar \gamma^i_{\tau}(\cdot)$ can be chosen to be measurable on $[0,\tau^2]$ using a measurable selection lemma (see, e.g. \cite[Lemma 3A, page 161]{lee1967foundations}). Now, set
\begingroup
$$
\bar u_{\tau}(t) := \bigg\{ \begin{array}{cc}
\phi^2(t) & t \in [-\tau^2,0], \\
\bar u^i_{\tau}(t-i\tau^2) & t \in [i\tau^2,(i+1)\tau^2], i=0,\dots,N
\end{array}
$$
$$
\bar \gamma_{\tau}(t) := \bar \gamma^i_{\tau}(t-i\tau^2) \quad t \in [i\tau^2,(i+1)\tau^2], i=0,\dots,N
$$
\endgroup
which are measurable functions and let
$$
\tilde h(t) := \left( \begin{array}{c}
f(t,y(t),y(t-\tau^1),\bar u_{\tau}(t),\bar u_{\tau}(t-\tau^2)) \\
f^0(t,y(t),y(t-\tau^1),\bar u_{\tau}(t),\bar u_{\tau}(t-\tau^2)) + \bar \gamma_{\tau}(t)
\end{array} \right)
$$
From the weak convergence in $L^{\infty}$ of $(\tilde g_k(\cdot))_{k \in \mathbb{N}}$ towards $\tilde g(\cdot)$ it follows that $(\tilde h_k(\cdot))_{k \in \mathbb{N}}$ converges to $\tilde h(\cdot)$ for the weak topology of $L^2$. Since $\underset{k \rightarrow \infty}{\lim} \int_{0}^{\bar T} \varphi(t) \Big( \tilde F_k(t) - \tilde h_k(t) \Big) \; dt = 0$ for every $\varphi(\cdot) \in L^2([0,\bar T],\mathbb{R}^{n+1})$, we infer that $\tilde h = \tilde F$ almost everywhere in $[0,\bar T]$. Then
$$
y(t) := \phi^1(t) \mathds{1}_{[-\Delta,0)}(t) + \mathds{1}_{[0,\bar T]}(t) \bigg[ \phi^1(0) + \int_{0}^{t} f(t,y(t),y(t-\tau^1),\bar u_{\tau}(t),\bar u_{\tau}(t-\tau^2)) \; ds \bigg]
$$

It remains to show that control $\bar u_{\tau}(\cdot)$ is optimal for (\textbf{OCP})$_{\tau}$. First of all, since $M$ is compact and $(x_{\tau,k}(T_{\tau}(u_{\tau,k})))_{k \in \mathbb{N}} \subseteq M$, from what we showed previously, we necessarily obtain $y(T) \in M$. Furthermore, from what we showed above and by definition of weak star convergence, we have $C_{T_{\tau}(u_{\tau,k})}(\tau,u_{\tau,k}(\cdot)) \rightarrow \alpha$ and $C_{T_{\tau}(u_{\tau,k})}(\tau,u_{\tau,k}(\cdot)) \rightarrow \int_{0}^{\bar T} \big( f^0(t,y(t),y(t-\tau^1),\bar u_{\tau}(t),\bar u_{\tau}(t-\tau^2)) + \bar \gamma_{\tau}(t) \big) \; dt$. Since $\bar \gamma_{\tau}(\cdot)$ takes only non negative values, it follows that
$$
\int_{0}^{\bar T} f^0(t,y(t),y(t-\tau^1),\bar u_{\tau}(t),\bar u_{\tau}(t-\tau^2)) \; dt \le \alpha \le C_{T_{\tau}(v_{\tau})}(\tau,v_{\tau}(\cdot))
$$
for every $v_{\tau}(\cdot) \in \mathcal{U}^{\tau}_{\Omega}$. Then, $\bar \gamma_{\tau}(\cdot)$ is necessarily zero and the conclusion follows.

Now, consider pure state delays i.e., problems (\textbf{OCP})$_{\tau}$ where $\tau = (\tau^1,0)$. It is clear that, if Assumption $(B_2)$ holds, one can proceed with the same procedure as above (which is nothing else but the usual Filippov's argument) to achieve the existence of optimal controls. Of course, Guinn's reduction (\ref{redGuinn}) is not needed.

\begin{remark} \label{remarkGuinn}
Guinn's reduction (\ref{redGuinn}) converts the dynamics with control delays into a new dynamics without delays but with a larger number of variables. It is clear, from the context, that the natural assumption to provide the existence of optimal controls for generic nonlinear dynamics is the convexity of system (\ref{redGuinn}) for every $N \in \mathbb{N}$ (since the delay varies), which is a too strong assumption. From this, we see that the proof of Lemma 2.1 in \cite{nababan1979filippov} does not work under the weaker assumption of convexity of the epigraph of the extended dynamics.
\end{remark}
\subsubsection{Convergence of Trajectories and Controls}
Let us now establish the convergence properties of the trajectories and related controls and/or dynamics.

We start by considering general nonlinear dynamics and costs, assuming the existence of optimal controls of (\textbf{OCP})$_{\tau}$ for every $\tau = (\tau^1,\tau^2) \in (0,\tau_0)^2$. Assumptions $(C_1)$, $(C_2)$ and $(C_3)$ are now required. Note that this concerns also the case of control systems with dynamics and cost affine w.r.t. the two control variables satisfying $(C_1)$, whose existence of optimal controls was proved in the previous paragraph.

Let $(\tau_k)_{k \in \mathbb{N}} = ((\tau^1_k,\tau^2_k))_{k \in \mathbb{N}} \in (0,\tau_0)^2$ an arbitrary sequence converging to 0 as $k$ tends to $\infty$ and let $(x_{\tau_k}(\cdot),u_{\tau_k}(\cdot))$ be an optimal solution of (\textbf{OCP})$_{\tau_k}$ with final time $T_{\tau_k}(u_{\tau_k})$. Since $T_{\tau}(u_{\tau}) \in [0,b]$, up to some subsequence, the sequence $(T_{\tau_k})_{k \in \mathbb{N}} := (T_{\tau_k}(u_{\tau_k}))_{k \in \mathbb{N}}$ converges to some $\bar T \in [0,b]$ and, since $M$ is compact, the sequence
$(x_{\tau_k}(T_{\tau_k})_{k \in \mathbb{N}} \subseteq M$ converges up to some subsequence to a point in $M$.

For every integer $k$ and almost every $t \in [0,T_{\tau_k}]$, set $\tilde g_k(t) := \tilde f(t,x_{\tau_k}(t),x_{\tau_k}(t-\tau^1_k),u_{\tau_k}(t),u_{\tau_k}(t-\tau^2_k))$. By Assumption $(A_4)$, we extend $\tilde g_k(\cdot)$ by zero on $(T_{\tau_k},b]$. Assumptions $(A_1)$ and $(A_4)$ imply that the sequence $(\tilde g_k(\cdot))_{k \in \mathbb{N}}$ is bounded in $L^{\infty}$, then, up to some subsequence, it converges to some $\tilde g(\cdot) = (g(\cdot),g^0(\cdot)) \in L^{\infty}([0,b],\mathbb{R}^{n+1})$ for the weak star topology of $L^{\infty}$. Exploiting the weak star convergence of $L^{\infty}$ (using $\mathds{1}_{[\bar T,b]} \tilde g$ as test function), we get that $\tilde g(t) = 0$ for almost every $t \in [\bar T,b]$. Then, for every $t \in [0,\bar T]$, denote
\begin{equation} \label{trajZero}
\bar x(t) := \phi^1(t) \mathds{1}_{[-\Delta,0)}(t) + \mathds{1}_{[0,\bar T]}(t) \bigg[ \phi^1(0) + \int_{0}^{t} g(s) \; ds \bigg]
\end{equation}
Clearly, $\bar x(\cdot)$ is absolutely continuous and $\bar x(t) = \lim_{k \rightarrow \infty} x_{\tau_k}(t)$ pointwise in $[-\Delta,\bar T]$. Moreover, by $(A_1)$, $(A_4)$ and the Arzel\`{a}-Ascoli theorem, up to some subsequence, $x_{\tau_k}(\cdot)$ converges to $\bar x(\cdot)$, uniformly in $[-\Delta,\bar T]$. In particular, by continuity, $\bar x(\bar T) \in M$.

Let us prove that there exists a control $\bar u(\cdot) \in L^{\infty}([0,\bar T],\Omega)$ such that $\bar x(\cdot)$ is an admissible trajectory of (\textbf{OCP}) associated with control $\bar u(\cdot)$.

Using the definition of $\beta$ given previously, for every $t \in [0,\bar T]$, we consider the set
\begingroup
\small
$$
\tilde Z_{\beta}(t) := \bigg\{ \left( \begin{array}{c} f(t,\bar x(t),\bar x(t),u,v) \\
f^0(t,\bar x(t),\bar x(t),u,v) + \gamma
\end{array} \right) : (u,v) \in \Omega^2, \gamma \ge 0, | f^0(t,\bar x(t),\bar x(t),u,v) + \gamma | \le \beta \bigg\}
$$
\endgroup
Assumption $(C_3)$ provides that $\tilde Z_{\beta}(t)$ is compact and convex for the standard topology of $\mathbb{R}^{n+1}$. Then, let us consider the following statements.
\begin{itemize}
\item For every $\delta > 0$, $t \in [0,\bar T]$, the set $\tilde Z_{\delta}(t) := \{ x \in \mathbb{R}^{n+1} : d(x,\tilde Z_{\beta}(t)) \le \delta \}$, where $d$ is the standard set distance, is convex and compact for the standard topology of $\mathbb{R}^{n+1}$. This fact is straightforward.
\item For every $\delta > 0$, the set
$$
\tilde{\mathcal{Z}}_{\delta} := \{ \tilde  g(\cdot) \in L^2([0,\bar T],\mathbb{R}^{n+1}) : \tilde  g(t) \in \tilde  Z_{\delta}(t) \ \textnormal{for almost every} \ t \in [0,\bar T] \}
$$
is convex and closed in $L^2([0,\bar T],\mathbb{R}^{n+1})$ for the strong topology of $L^2$. Then, it is closed in $L^2([0,\bar T],\mathbb{R}^{n+1})$ for the weak topology of $L^2$.

Convexity is obvious. Let $(\tilde w_k(\cdot))_{k \in \mathbb{N}} \in \tilde{\mathcal{Z}}_{\delta}$ such that $\tilde w_k(\cdot) \xrightarrow{L^2} \tilde w(\cdot)$. Then $\tilde w(\cdot) \in L^2([0,\bar T],\mathbb{R}^{n+1})$ and there exists a subsequence such that $\tilde w_{k_m}(\cdot) \xrightarrow{a.e} \tilde w(\cdot)$. Since $\tilde Z_{\delta}(t)$ is closed for the standard topology of $\mathbb{R}^{n+1}$, for almost every $t \in [0, \bar T]$, it holds $\displaystyle \tilde w(t) = \lim_{m \rightarrow \infty} \tilde w_{k_m}(t) \in \tilde Z_{\delta}(t)$ and the statement follows.
\item For every $\delta > 0$, it exists an integer $k_{\delta}$ such that: $\forall k \geq k_{\delta} : \tilde g_k(\cdot) \in \tilde{\mathcal{Z}}_{\delta}$.

Indeed, thanks to assumptions $(A_1)$, $(A_4)$, mappings $f$, $f^0$ are globally Lipschitz onto $[0,\bar T] \times \overline{B^{2 n}_b(0)} \times \Omega^2$ and, using the mean value theorem, for almost every $t \in [0,\bar T]$
$$
\displaystyle \inf_{z \in \tilde Z_{\beta}(t)} \| \tilde g_k(t) - z \| \le C \Big[ \| x_{\tau_k}(t) - \bar x(t) \| + \| x_{\tau_k}(t - \tau^1_k) - \bar x(t) \| \Big]
$$
for a suitable constant $C > 0$. The conclusion follows.
\end{itemize}
Using the closedness of $\tilde{\mathcal{Z}}_{\delta}$ w.r.t. the weak topology of $L^2$, we infer that $\tilde g(\cdot) \in \tilde{\mathcal{Z}}_{\delta}, \ \forall \; \delta > 0$. This implies that $\tilde g(\cdot) \in \tilde{\mathcal{Z}}_0$. We obtain that, for almost every $t \in [0,\bar T]$, there exist $\bar v(t), \bar w(t) \in \Omega$ and $\bar \chi(t) \geq 0$ such that
\begingroup
\begin{equation} \label{gConvergence}
\tilde g(t) := \left( \begin{array}{c}
f(t,\bar x(t),\bar x(t),\bar v(t),\bar w(t)) \\
f^0(t,\bar x(t),\bar x(t),\bar v(t),\bar w(t)) + \bar \chi(t)
\end{array} \right)
\end{equation}
\endgroup
Moreover, since $\Omega$ is compact, functions $\bar v(\cdot)$, $\bar w(\cdot)$ and $\bar \chi(\cdot)$ can be chosen to be measurable on $[0,\bar T]$ using the usual standard measurable selection lemma cited previously. As final step, we want to show that $\int_{0}^{\bar T} \tilde g(t) \cdot \varphi(t) \; dt = \int_{0}^{\bar T} \tilde h(t) \cdot \varphi(t) \; dt$ for every test function $\varphi(\cdot) \in L^2([0,\bar T],\mathbb{R}^{n+1})$, where
$$
\tilde h(t) = \left( \begin{array}{c}
f(t,\bar x(t),\bar x(t),\bar u(t),\bar u(t)) \\
f^0(t,\bar x(t),\bar x(t),\bar u(t),\bar u(t)) + \bar \gamma(t)
\end{array} \right)
$$
with $\bar u(\cdot) \in L^{\infty}([0,\bar T],\Omega)$ and $\bar \gamma(\cdot)$ is a non-negative measurable function. From this and (\ref{trajZero}), it will follow that $(\bar x(\cdot),\bar u(\cdot))$ is the admissible trajectory of (\textbf{OCP}) sought. For this, we define $\tilde h_k(t) := \tilde f(t,x_{\tau_k}(t),x_{\tau_k}(t-\tau^1_k),u_{\tau_k}(t),u_{\tau_k}(t))$ which, with the obvious extensions presented previously, is bounded in $L^{\infty}([0,b],\mathbb{R}^{n+1})$. Then, up to some subsequence, it converges to some $\tilde h(\cdot) \in L^{\infty}([0,b],\mathbb{R}^{n+1})$ for the weak star topology of $L^{\infty}$. Using exactly the previous argument where $\tilde Z_{\beta}(t)$ is replaced by
\begingroup
\small
$$
\tilde W_{\beta}(t) := \bigg\{ \left( \begin{array}{c} f(t,\bar x(t),\bar x(t),u,u) \\
f^0(t,\bar x(t),\bar x(t),u,u) + \gamma
\end{array} \right) : u \in \Omega, \gamma \ge 0, | f^0(t,\bar x(t),\bar x(t),u,u) + \gamma | \le \beta \bigg\}
$$
\endgroup
it follows that there exist (again using $(C_3)$), for almost every $t \in [0,\bar T]$, $\bar u(t) \in \Omega$ and $\bar \gamma(t) \ge 0$ such that
$$
\tilde h(t) = \left( \begin{array}{c}
f(t,\bar x(t),\bar x(t),\bar u(t),\bar u(t)) \\
f^0(t,\bar x(t),\bar x(t),\bar u(t),\bar u(t)) + \bar \gamma(t)
\end{array} \right)
$$
Moreover, as done previously, the functions $\bar u(\cdot)$ and $\bar \gamma(\cdot)$ can be chosen to be measurable on $[0,\bar T]$. The last statement follows by proving that, up to some subsequence,
$$
\lim_{k \rightarrow \infty} \int_{0}^{\bar T} \bigg( \tilde f(t,x_{\tau_k}(t),x_{\tau_k}(t-\tau^1_k),u_{\tau_k}(t),u_{\tau_k}(t-\tau^2_k))
$$
$$
\qquad \qquad \qquad \qquad \qquad - \tilde f(t,x_{\tau_k}(t),x_{\tau_k}(t-\tau^1_k),u_{\tau_k}(t),u_{\tau_k}(t)) \bigg) \cdot \varphi(t) \; dt = 0
$$
for every test function $\varphi(\cdot) \in L^2([0,\bar T],\mathbb{R}^{n+1})$. Then, fix $\varphi(\cdot) \in L^2([0,\bar T],\mathbb{R}^{n+1})$ and consider the \textit{shift operator}
$$
S_{\tau} : L^2(\mathbb{R},\mathbb{R}^m) \rightarrow L^2(\mathbb{R},\mathbb{R}^m) : \Big( t \mapsto \phi(t) \Big) \mapsto \Big( t \mapsto \phi(t - \tau) \Big)
$$
Using the dominated convergence theorem, it is clear that, for every $\phi(\cdot) \in L^2(\mathbb{R},\mathbb{R}^m)$, it holds $\| S_{\tau} \phi - \phi \|_{L^2} \rightarrow 0$ when $\tau \rightarrow 0$. Extending $u_{\tau_k}(\cdot)$, $u_{\tau_k}(\cdot-\tau^2_k)$ by zero out $[-\Delta,b]$ and considering Assumption $(A_4)$, up to some subsequence, $(u_{\tau_k}(\cdot))_{k \in \mathbb{N}}$ converges to some $\bar \mu(\cdot) \in L^2_{\textnormal{loc}}(\mathbb{R},\mathbb{R}^m)$ for the weak topology of $L^2_{\textnormal{loc}}$. Necessarily, up to some subsequence, $(u_{\tau_k}(\cdot-\tau_k))_{k \in \mathbb{N}}$ also converges to $\bar \mu(\cdot)$ for the weak topology of $L^2_{\textnormal{loc}}$. Indeed, for every $\eta(\cdot) \in L^2_{\textnormal{loc}}(\mathbb{R},\mathbb{R}^m)$, one has
$$
\int_{\mathbb{R}} (u_{\tau_k}(t-\tau_k) - \bar \mu(t)) \cdot \eta(t) \; dt = \int_{\mathbb{R}} (u_{\tau_k}(t) - \bar \mu(t)) \cdot \Big( S_{-\tau_k} \eta \Big)(t) \; dt + \int_{\mathbb{R}} (S_{\tau_k} \bar \mu - \bar \mu)(t) \cdot \eta(t) \; dt
$$
$$
= \int_{\mathbb{R}} (u_{\tau_k}(t) - \bar \mu(t)) \cdot \eta(t) \; dt + \int_{\mathbb{R}} (u_{\tau_k}(t) - \bar \mu(t)) \cdot \Big( S_{-\tau_k} \eta - \eta \Big)(t) \; dt + \int_{\mathbb{R}} (S_{\tau_k} \bar \mu - \bar \mu)(t) \cdot \eta(t) \; dt
$$
which clearly converges to 0. Now, Assumption $(C_1)$ gives that both $(u_{\tau_k}(\cdot))_{k \in \mathbb{N}}$ and $(u_{\tau_k}(\cdot-\tau_k))_{k \in \mathbb{N}}$ converge to $\bar \mu(\cdot)$ for the strong topology of $L^1_{\textnormal{loc}}$ (see \cite[Corollary 1]{visintin1984strong}). This implies that, up to some subsequence, $(u_{\tau_k}(\cdot) - u_{\tau_k}(\cdot-\tau^2_k))_{k \in \mathbb{N}}$ converges to 0 a.e. in $[0,\bar T]$. Then, thanks to Assumption $(C_2)$, up to some subsequence,
\begingroup
\scriptsize
$$
\left| \int_{0}^{\bar T} \bigg( \tilde f(t,x_{\tau_k}(t),x_{\tau_k}(t-\tau^1_k),u_{\tau_k}(t),u_{\tau_k}(t-\tau^2_k)) - \tilde f(t,x_{\tau_k}(t),x_{\tau_k}(t-\tau^1_k),u_{\tau_k}(t),u_{\tau_k}(t)) \bigg) \cdot \varphi(t) \; dt \right|
$$
\endgroup
$$
\le \Lambda \ \| \varphi(\cdot) \|_{L^2} \ \| u_{\tau_k}(\cdot-\tau^2_k) - u_{\tau_k}(\cdot) \|_{L^2} \rightarrow 0
$$
where $\Lambda \ge 0$ is a constant and we use the dominated convergence theorem.

We complete this step by showing that $\bar T = T$ and $(\bar x(\cdot),\bar u(\cdot)) = (x(\cdot),u(\cdot))$. We denote by $C_S({\tau},w(\cdot))$ the cost of (\textbf{OCP})$_{\tau}$ given by the admissible control $w(\cdot)$ and the final time $S$. First, the previous argument shows that $C_{T_{\tau_k}}(\tau_k,u_{\tau_k}(\cdot)) \rightarrow C_{\bar T}(0,\bar u(\cdot)) + \int_{0}^{\bar T} \bar \gamma(t) \; dt$. By Section \ref{controllability}, for every integer $k$, there exists a sequence $(S_k,w_k(\cdot),y_k(\cdot))$ respectively of final times, admissible controls and trajectories of (\textbf{OCP})$_{\tau_k}$ which converges to $(T,u(\cdot),x(\cdot))$ (with the evident topologies) as $k$ converges to $\infty$. By optimality, we have $C_{T_{\tau_k}}(\tau_k,u_{\tau_k}(\cdot)) \le C_{S_k}(\tau_k,w_k(\cdot))$. Since $\bar \gamma(t) \ge 0$, passing to the limit gives $C_{\bar T}(0,\bar u(\cdot)) \le C_T(u(\cdot))$ and from Assumption $(A_2)$ we infer that $\bar \gamma(\cdot) = 0$ and $\bar T = T$, $(\bar x(\cdot),\bar u(\cdot)) = (x(\cdot),u(\cdot))$.

All the previous arguments can be used to show that also, up to some subsequence, $(u_{\tau_k}(\cdot))_{k \in \mathbb{N}}$ converges to $u(\cdot)$ almost everywhere in $[0,T]$. Indeed, if $\bar \mu(\cdot) \in L^2_{\textnormal{loc}}(\mathbb{R},\mathbb{R}^m)$ is the previous limit for the weak topology of $L^2_{\textnormal{loc}}$ of a subsequence of $(u_{\tau_k}(\cdot))_{k \in \mathbb{N}}$, the previous result shows that $\int_{0}^{\bar T} \tilde f(t,\bar x(t),\bar x(t),\bar u(t),\bar u(t)) \cdot \varphi(t) \; dt = \int_{0}^{\bar T} \tilde f(t,\bar x(t),\bar x(t),\bar \mu(t),\bar \mu(t)) \cdot \varphi(t) \; dt$ for every test function $\varphi(\cdot) \in L^2([0,\bar T],\mathbb{R}^{n+1})$ which means that $u(\cdot) = \bar u(\cdot)$ generates the same trajectory and cost as $\mu(\cdot)$, and so, by uniqueness, $\mu(\cdot) = u(\cdot)$.

Now, we consider the case consisting of pure state delays i.e., problems (\textbf{OCP})$_{\tau}$ such that $\tau = (\tau^1,0) \in (0,\tau_0) \times \{ 0 \}$. It is not difficult to see that the argument above until expression (\ref{gConvergence}) can be iterated in this context, with the help of Assumption $(B_2)$, by substituting $\tilde Z_{\beta}(t)$ with
\begingroup
\small
$$
\bigg\{ \left(f_1(t,x,y,u),f^0_1(t,x,y,u)+\gamma,\frac{\partial \tilde f_1}{\partial x}(t,x,y,u),\frac{\partial \tilde f_1}{\partial y}(t,x,y,u)\right)
$$
$$
: u \in \Omega, \gamma \ge 0, | f^0_1(t,\bar x(t),\bar x(t),u,v) + \gamma | \le \beta \bigg\}
$$
\endgroup
to obtain the sought convergence of trajectories, costs and dynamics (instead of optimal controls). Moreover, the convexity of the epigraph concerning the derivatives of the dynamics leads to the following weak convergences (useful in the sequel)
\begingroup
\small
\begin{eqnarray} \label{derDynConv}
\begin{cases}
\displaystyle \frac{\partial \tilde f_1}{\partial x}(\cdot,x_{\tau_k}(\cdot),x_{\tau_k}(\cdot-\tau^1_k),u_{\tau_k}(\cdot)) \overset{(L^{\infty})^*}{\rightharpoonup} \frac{\partial \tilde f_1}{\partial x}(\cdot,x(\cdot),x(\cdot),u(\cdot)) \medskip \\
\displaystyle \frac{\partial \tilde f_1}{\partial y}(\cdot,x_{\tau_k}(\cdot),x_{\tau_k}(\cdot-\tau^1_k),u_{\tau_k}(\cdot)) \overset{(L^{\infty})^*}{\rightharpoonup} \frac{\partial \tilde f_1}{\partial y}(\cdot,x(\cdot),x(\cdot),u(\cdot))
\end{cases}
\end{eqnarray}
\endgroup

At this step, for every considered case, we have shown that $(T,x(\cdot),u(\cdot))$ (substituted by $(T,x(\cdot),\dot{x}(\cdot))$ for the case of pure state delays) is the unique closure point (for the topologies used above) of $(T_{\tau_k},x_{\tau_k}(\cdot),u_{\tau_k}(\cdot))_{k \in \mathbb{N}}$ ($(T_{\tau_k},x_{\tau_k}(\cdot),\dot{x}_{\tau_k}(\cdot))_{k \in \mathbb{N}}$ for the cases of pure state delays), for any (sub)sequence of delays $(\tau_k)_{k \in \mathbb{N}}$ converging to 0 and therefore the convergence holds as well for the whole family $(T_{\tau},x_{\tau}(\cdot),u_{\tau}(\cdot))_{\tau \in (0,\tau_0)^2}$ ($(T_{\tau},x_{\tau}(\cdot),\dot{x}_{\tau}(\cdot))_{\tau \in (0,\tau_0)^2}$ for the cases of pure state delays).

\subsubsection{Convergence of the Adjoint Vectors}

In the sequel, $(x_{\tau}(\cdot),u_{\tau}(\cdot))$ will denote an optimal solution of (\textbf{OCP})$_{\tau}$ defined on $[-\Delta,T_{\tau}]$ such that, if needed, it is extended continuously in $[-\Delta,T]$. From the PMP, it follows that $x_{\tau}(\cdot)$ is the projection of an extremal $(x_{\tau}(\cdot),p_{\tau}(\cdot),p^0_{\tau},u_{\tau}(\cdot))$ which satisfies (\ref{dynDual}). From now on, we consider that either Assumption $(B_1)$ or Assumption $(C_1)$ are satisfied, depending on whether we consider pure state delays or not. We remind that $(C_1)$ implies the convergence almost everywhere of $(u_{\tau}(\cdot))_{\tau \in (0,\tau_0)^2}$ towards $u(\cdot)$.

The main step of this part consists in showing the convergence of the Pontryagin cone of (\textbf{OCP})$_{\tau}$ towards the Pontryagin cone related to (\textbf{OCP}). Since the definition of variation vectors relies on Lebesgue points of optimal controls, we need first an appropriate set of converging Lebesgue points.
\begin{lemma} \label{lemma1}
Consider (\textbf{OCP})$_{\tau}$ with pure state delays i.e., $\tau = (\tau^1,0)$. For every $s \in (0,T)$, Lebesgue point of function $\tilde f_1(\cdot,x(\cdot),x(\cdot),u(\cdot),u(\cdot))$, there exists a family $(s_{\tau})_{\tau^1 \in (0,\tau_0)} \subseteq [s,T)$, which are Lebesgue points of $\tilde f_1(\cdot,x_{\tau}(\cdot),x_{\tau}(\cdot-\tau^1),u_{\tau}(\cdot))$, such that $\tilde f_1(s_{\tau},x_{\tau}(s_{\tau}),x_{\tau}(s_{\tau}-\tau^1),u_{\tau}(s_{\tau})) \rightarrow \tilde f_1(s,x(s),x(s),u(s))$ and $s_{\tau} \rightarrow s$ as $\tau \rightarrow 0$.

Conversely, if we consider (\textbf{OCP})$_{\tau}$ with general delays $\tau = (\tau^1,\tau^2)$, for every $s \in (0,T)$, Lebesgue point of $u(\cdot)$, there exists a family $(s_{\tau})_{\tau \in (0,\tau_0)^2} \subseteq [s,T)$, which are Lebesgue points of $u_{\tau}(\cdot)$, $u_{\tau}(\cdot-\tau^2)$ and of $u_{\tau}(\cdot+\tau^2)$, such that $u_{\tau}(s_{\tau}) \rightarrow u(s)$, $u_{\tau}(s_{\tau}-\tau^2) \rightarrow u(s)$, $u_{\tau}(s_{\tau}+\tau^2) \rightarrow u(s)$ and $s_{\tau} \rightarrow s$ as $\tau \rightarrow 0$.
\end{lemma}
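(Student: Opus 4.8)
The plan is to reduce both claims to \emph{averaged} statements by the Lebesgue differentiation theorem, to push those averages to the limit using the convergence results of the previous subsection, and then to \emph{select} each $s_\tau$ inside a window $[s,s+\delta(\tau)]$ whose radius $\delta(\tau)\to 0$, by a Chebyshev/Egorov-type and diagonal argument. The constraint that $s_\tau$ be a Lebesgue point of the relevant delayed data is cheap: under $(B_1)$ the maps $t\mapsto\tilde f_1(t,x_\tau(t),x_\tau(t-\tau^1),u_\tau(t))$ are continuous (so every point works), and in the general case almost every $t\in[0,T_\tau)$ is simultaneously a Lebesgue point of $u_\tau(\cdot)$, $u_\tau(\cdot-\tau^2)$ and $u_\tau(\cdot+\tau^2)$, hence the selection can always be made in a set of positive measure. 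The passage from an arbitrary sequence $\tau_k\to 0$ to the whole family is then routine, choosing for each $\tau$ the radius $\delta(\tau)$ to be, say, the largest $\delta\le\min\{T-s,\sqrt{\|\tau\|}\}$ for which the relevant average over $[s,s+\delta]$ stays below a fixed threshold $\rho(\|\tau\|)\to 0$; the analysis below shows that $\delta(\tau)$ is well defined, positive and tends to $0$.

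For the control-delay case most of the work is already available. By $(C_1)$ and Visintin's theorem, $u_{\tau_k}(\cdot)\to u(\cdot)$ strongly in $L^1_{\textnormal{loc}}$, and the shift-operator estimate used earlier yields the same strong convergence for $u_{\tau_k}(\cdot-\tau^2_k)$ and $u_{\tau_k}(\cdot+\tau^2_k)$. Since $s$ is a Lebesgue point of $u(\cdot)$, for $\delta$ small the average $\frac1\delta\int_s^{s+\delta}\|u(t)-u(s)\|\,dt$ is small; adding and subtracting $u(t)$ and invoking the strong $L^1$ convergence, the average of $\|u_\tau(t)-u(s)\|+\|u_\tau(t-\tau^2)-u(s)\|+\|u_\tau(t+\tau^2)-u(s)\|$ over $[s,s+\delta]$ is small once $\|\tau\|$ is small. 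By Chebyshev's inequality the set of $t\in[s,s+\delta]$ where this integrand exceeds four times its average has measure $<\delta/2$; intersecting with the full-measure set of common Lebesgue points gives a positive-measure set in which to pick $s_\tau$. Coupling $\delta=\delta(\tau)\to 0$ to $\|\tau\|$ diagonally then produces $s_\tau\to s$ with $u_\tau(s_\tau)\to u(s)$, $u_\tau(s_\tau-\tau^2)\to u(s)$ and $u_\tau(s_\tau+\tau^2)\to u(s)$.

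The pure-state case is where I expect the main obstacle. Here no pointwise (or strong $L^1$) convergence of the optimal controls $u_\tau(\cdot)$ is available; one only has the weak-$\ast$ convergence $h_\tau(\cdot):=\tilde f_1(\cdot,x_\tau(\cdot),x_\tau(\cdot-\tau^1),u_\tau(\cdot))\rightharpoonup\tilde f_1(\cdot,x(\cdot),x(\cdot),u(\cdot))=:h(\cdot)$ in $L^\infty$, together with the uniform convergence $x_\tau\to x$. Testing the weak-$\ast$ convergence against $\frac1\delta\mathds{1}_{[s,s+\delta]}$ gives $\frac1\delta\int_s^{s+\delta}h_\tau\to\frac1\delta\int_s^{s+\delta}h$ for each fixed $\delta$, and the Lebesgue-point property of $s$ gives $\frac1\delta\int_s^{s+\delta}h\to\ell:=\tilde f_1(s,x(s),x(s),u(s))$ as $\delta\to 0$; coupling $\delta(\tau)\to 0$ to $\tau$, the averages $\frac1{\delta(\tau)}\int_s^{s+\delta(\tau)}h_\tau$ converge to $\ell$. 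The remaining step --- turning this averaged convergence into $h_\tau(s_\tau)\to\ell$ for a well-chosen $s_\tau\in[s,s+\delta(\tau)]$ --- is the genuinely delicate point, and it is precisely here that the continuity of $h_\tau$ supplied by $(B_1)$ is used: after replacing $x_\tau(\cdot)$ and $x_\tau(\cdot-\tau^1)$ by $x(\cdot)$ up to a uniformly small error (controlled by $x_\tau\to x$ and continuity of $\tilde f_1$), the only oscillating dependence of $h_\tau$ left is through $u_\tau(s_\tau)$, and a continuity/selection argument then locates $s_\tau$ at which $h_\tau$ is close to the limiting average $\ell$. Once $s_\tau\to s$ is secured, the companion fact $\tilde f_1(s_\tau,x_\tau(s_\tau),x_\tau(s_\tau-\tau^1),z)\to\tilde f_1(s,x(s),x(s),z)$ for fixed $z\in\Omega$ --- which is what is ultimately needed to control the vectors $\omega^-_z(s_\tau)$ entering the delayed Pontryagin cones --- follows at once from $x_\tau\to x$ uniformly and continuity of $\tilde f_1$.
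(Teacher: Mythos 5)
Your treatment of the second assertion (general delays) is sound and uses essentially the same ingredients as the paper: $(C_1)$ plus Visintin's theorem and the shift-operator estimate give strong $L^1_{\textnormal{loc}}$ convergence of $u_{\tau}(\cdot)$ and of its shifts to $u(\cdot)$, and the Lebesgue-point property of $s$ together with a Chebyshev selection in a shrinking window produces $s_{\tau}$. The paper phrases this as a proof by contradiction via a.e.\ convergence, but the content is the same; your version is if anything more explicit about how the window radius is coupled to $\|\tau\|$.

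The first assertion (pure state delays) is where your proposal has a genuine gap, and it sits exactly at the step you yourself flag as ``delicate'' and then leave as a black box. You correctly reduce the problem to: the averages $\frac{1}{\delta}\int_s^{s+\delta}h_{\tau}$ converge to $\ell=\tilde f_1(s,x(s),x(s),u(s))$ along a suitable coupling $\delta=\delta(\tau)\to 0$, and one must exhibit a \emph{single} point $s_{\tau}\in[s,s+\delta(\tau)]$ at which the continuous function $h_{\tau}$ itself is close to $\ell$. But ``a continuous function is somewhere close to its average'' is a \emph{scalar} fact (the mean value theorem for integrals); it fails for vector-valued functions: a continuous curve on the unit sphere of $\mathbb{R}^{n+1}$ has average arbitrarily close to $0$ while never approaching $0$. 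Since only weak-$\ast$ convergence of $h_{\tau}$ is available here (there is no Visintin-type upgrade in the pure-state case), your remark that ``the only oscillating dependence left is through $u_{\tau}(s_{\tau})$'' does not exclude precisely this kind of oscillation of $\tilde f_1(t,x(t),x(t),u_{\tau}(t))$ \emph{around}, rather than through, the value $\ell$. The paper's proof supplies the missing mechanism by working componentwise: under $(B_1)$ each scalar component $h_{\tau,i}$ is continuous, hence attains its average over $[s,s+\alpha_s]$ at some $t_{k,i}$, and that average is close to $\ell_i$ by weak-$\ast$ convergence tested against $\mathds{1}_{[s,s+\alpha_s]}$ plus the Lebesgue-point property. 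To complete your sketch you need this componentwise intermediate-value device, together with an argument that the good points can be taken common to all $n+1$ components (a point on which the paper's own contradiction hypothesis, which fixes a single offending component $i_k$ uniformly in $t$, is itself rather delicate). As written, your selection step is asserted, not proved.
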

\begin{proof}[Proof of Lemma \ref{lemma1}] We start by proving the first assertion. Denote $h^{\tau}(t) = (h^{\tau}_1(t),\dots,h^{\tau}_{n+1}(t)) := \tilde f_1(t,x_{\tau}(t),x_{\tau}(t-\tau^1),u_{\tau}(t))$ and $h(t) = (h_1(t),\dots,h_{n+1}(t)) := \tilde f_1(t,x(t),x(t),u(t))$. Let us prove that, for every $s \in (0,T)$ Lebesgue point of function $h(\cdot)$, for every $\beta > 0$, $\alpha_s > 0$ (small enough so that $s+\alpha_s<T$), there exists $\gamma_{s,\alpha_s,\beta} > 0$ such that, for every $\tau^1 \in (0,\gamma_{s,\alpha_s,\beta})$, there exists $s_{\tau} \in [s,s+\alpha_s]$ Lebesgue point of function $h^{\tau}(\cdot)$ for which $\| h^{\tau}(s_{\tau}) - h(s) \| < \beta$. By contradiction, suppose that there exists $s \in (0,T)$ Lebesgue point of the function $h(\cdot)$, $\alpha_{s} > 0$, $\beta > 0$ such that, for every integer $k$, there exists $\tau_k \in (0,1/k) \times \{ 0 \}$ and $i_k \in \{ 1,\dots,n+1 \}$ for which, for every $t \in [s,s+\alpha_{s}]$ Lebesgue point of function $h^{\tau_k}(\cdot)$, there holds $|h^{\tau_k}_{i_k}(t) - h_{i_k}(s)| \geq \beta$. From the previous results, the family $(h^{\tau}(\cdot))_{\tau \in (0,\tau_0) \times \{ 0 \}}$ converges to $h(\cdot)$ in $L^{\infty}$ for the weak star topology. From this, the restriction of the components of $(h^{\tau}(\cdot))_{\tau \in (0,\tau_0) \times \{ 0 \}}$ to $[s,s+\alpha_{s}]$ converges to the restriction of the components of $h(\cdot)$ to $[s,s+\alpha_{s}]$ in the $L^1$ norm. This means that, for every $\varepsilon > 0$ and component $i$, there exists an integer $k_{\varepsilon,i}$ such that, for every $k \geq k_{\varepsilon,i}$, it holds $\Big| \int_s^{s+\alpha_s} h^{\tau_k}_i(t) \ dt - \int_s^{s+\alpha_s} h_i(t) \ dt \Big| < \varepsilon$. Since $s$ is a Lebesgue point of $h(\cdot)$, it exists a real $\delta_{\alpha_s,i}$ such that $\int_s^{s+\alpha_s} h_i(s) \ ds = h_i(s) + \delta_{\alpha_s,i}$. Without loss of generality, we can assume $\beta > \delta_{\alpha_s,i}$ and let $M > 0$ such that $\beta/M + \delta_{\alpha_s,i} < \beta$, $-\beta/M + \delta_{\alpha_s,i} > -\beta$. Assumption $(B_1)$ implies that $h^{\tau}(\cdot)$ is continuous for every $\tau^1 > 0$, and then, there exists $t_{k,i} \in [s,s+\alpha_s]$ and an integer $k_{s,\alpha_s,\beta,i}$ such that, for every $i$ and every $k \geq k_{s,\alpha_s,\beta,i}$, $\Big| h^{\tau_k}_i(t_{k,i}) - \int_s^{s+\alpha_s} h_i(t) \ dt \Big| < \beta \alpha_s/M $. This implies that there exists an integer $\tilde k := \max_i k_{s,\alpha_s,\beta,i}$ such that for every $\tau_k \in (0,1/\tilde k) \times \{ 0 \}$ and every $i \in \{ 1,\dots,n+1 \}$ there exists $t_{k,i} \in [s,s+\alpha_s]$ Lebesgue point of function $h^{\tau_k}(\cdot)$ (because $h^{\tau_k}(\cdot)$ is continuous) such that $|h^{\tau_k}_i(t_{k,i}) - h_i(s)| < \beta$, a contradiction.

Now, we consider the second statement. Without loss of generality, we extend $u_{\tau}(\cdot)$ by some constant vector of $\Omega$ in $[T_{\tau},b]$. Denote $h_{\tau}(t) = (h^1_{\tau}(t),\dots,h^{3m}_{\tau}(t)) := \Big( u_{\tau}(t),u_{\tau}(t-\tau^2),u_{\tau}(t+\tau^2) \Big)$, $h(t) = (h^1(t),\dots,h^{3m}(t)) := \Big( u(t),u(t),u(t) \Big)$ and fix $s \in (0,T)$, Lebesgue point of $h(\cdot)$. By contradiction, suppose that there exist $\beta > 0$ and $\alpha > 0$ such that, for every integer $k$, there exist $\tau_k = (\tau^1_k,\tau^2_k) \in (0,1/k)^2$ and $i_k \in \{ 1,\dots,3m \}$ for which, for every $r \in [s,s+\alpha]$ Lebesgue point of $h_{\tau_k}(\cdot)$, it holds $| h^{i_k}_{\tau_k}(r) - h^{i_k}(s) | \ge \beta$. From the previous argument, up to some extension, the family $(u_{\tau}(\cdot))_{\tau \in (0,\tau_0)^2}$ converges to $u(\cdot)$ almost everywhere in $[0,T]$. Clearly, the same holds true for $(u_{\tau}(\cdot-\tau^2))_{\tau \in (0,\tau_0)^2}$ and $(u_{\tau}(\cdot+\tau^2))_{\tau \in (0,\tau_0)^2}$. Then, $(h^{i}_{\tau_k}(\cdot))_{k \in \mathbb{N}}$ converges almost everywhere to $h^i(\cdot)$, raising a contradiction.
\end{proof}

We are now able to prove the convergence property for Pontryagin cones.
\begin{lemma} \label{prop1}
For every $\tilde v \in \tilde K^0(T)$ and every $\tau = (\tau^1,\tau^2) \in (0,\tau_0 )^2$ (or $\tau = (\tau^1,0) \in (0,\tau_0) \times \{ 0 \}$ in the case of pure state delays), there exists $\tilde v_{\tau} \in \tilde K^{\tau}(T_{\tau})$ such that $\tilde v_{\tau}$ converges to $\tilde v$ as $\tau$ tends to 0.
\end{lemma}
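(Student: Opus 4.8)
\section{Proof proposal for Lemma \ref{prop1}}

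The plan is to reduce the statement to the finitely many "generators" of the Pontryagin cone, then to a single needle‐like perturbation, and to identify the real work as a continuity statement for the linear variational flow \eqref{dynVariation} under vanishing delay. By Definition \ref{defCone}, $\tilde K^0(T)$ is the closure of the set of finite combinations $\sum_{j=1}^N \lambda_j\, \tilde w^0_{s_j,z_j}(T)$ with $\lambda_j\ge 0$, $z_j\in\Omega$ and $s_j$ a Lebesgue point of $u(\cdot)$ (equivalently of $\tilde f_1(\cdot,x(\cdot),x(\cdot),u(\cdot))$ in the pure-state case). It therefore suffices to: (i) associate, to each generator $\tilde w^0_{s,z}(T)$ and each small $\tau$, an element of $\tilde K^\tau(T_\tau)$ converging to it as $\tau\to 0$; (ii) take the corresponding conic combination, which lies in $\tilde K^\tau(T_\tau)$ by convexity and converges to the combination upstairs; (iii) pass to the closure by a diagonal argument: given $\tilde v\in\tilde K^0(T)$, choose finite combinations $\tilde v^{(N)}\to\tilde v$, pick $\delta_N\downarrow 0$ so that the element $\tilde v^{(N)}_\tau\in\tilde K^\tau(T_\tau)$ built in (ii) satisfies $\|\tilde v^{(N)}_\tau-\tilde v^{(N)}\|<1/N$ for $\|\tau\|<\delta_N$, and set $\tilde v_\tau:=\tilde v^{(N)}_\tau$ on the shell $\delta_{N+1}\le\|\tau\|<\delta_N$; then $\tilde v_\tau\in\tilde K^\tau(T_\tau)$ and $\tilde v_\tau\to\tilde v$.

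For a single generator, fix $z\in\Omega$ and a Lebesgue point $s$ as above, and let $(s_\tau)$ be the family furnished by Lemma \ref{lemma1}. Define $\tilde v_\tau:=\tilde w^\tau_{s_\tau,z}(T_\tau)=\tilde v^\tau_{s_\tau,\omega^-_z(s_\tau)}(T_\tau)+\tilde v^\tau_{s_\tau+\tau^2,\omega^+_z(s_\tau)}(T_\tau)\in\tilde K^\tau(T_\tau)$. First, the jump vectors converge: using the uniform convergence $x_\tau\to x$, the control convergences $u_\tau(s_\tau)\to u(s)$, $u_\tau(s_\tau-\tau^2)\to u(s)$, $u_\tau(s_\tau+\tau^2)\to u(s)$ and $s_\tau\to s$ from Lemma \ref{lemma1}, and the continuity of $\tilde f$ (in the pure-state case one invokes directly the convergence of $\tilde f_1$ asserted in Lemma \ref{lemma1}), one gets $\omega^-_z(s_\tau)\to\tilde f(s,x(s),x(s),z,u(s))-\tilde f(s,x(s),x(s),u(s),u(s))$ and $\omega^+_z(s_\tau)\to\tilde f(s,x(s),x(s),u(s),z)-\tilde f(s,x(s),x(s),u(s),u(s))$, whose sum is precisely the jump vector $\omega^-_z(s)+\omega^+_z(s)$ entering $\tilde w^0_{s,z}(T)=\tilde v^0_{s,\omega^-_z(s)+\omega^+_z(s)}(T)$.

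It remains to prove the following continuity claim, which is the crux: if $\tau\to 0$, $s_\tau\to s$, $\xi_\tau\to\xi$ and $T_\tau\to T$, then the solution $\tilde v^\tau_{s_\tau,\xi_\tau}(\cdot)$ of the delayed linear system \eqref{dynVariation} — whose coefficients are $\tfrac{\partial\tilde f}{\partial x}$ and $\tfrac{\partial\tilde f}{\partial y}$ evaluated along $(x_\tau(\cdot),x_\tau(\cdot-\tau^1),u_\tau(\cdot),u_\tau(\cdot-\tau^2))$ — converges uniformly (after the continuous extensions fixed before the lemma) to $\tilde v^0_{s,\xi}(\cdot)$, the solution of the non-delayed linear system with coefficient $\tfrac{\partial\tilde f}{\partial x}+\tfrac{\partial\tilde f}{\partial y}$ along $(x(\cdot),x(\cdot),u(\cdot),u(\cdot))$. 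Granting this, applying it with $\xi_\tau=\omega^-_z(s_\tau)$ at time $s_\tau$ and with $\xi_\tau=\omega^+_z(s_\tau)$ at time $s_\tau+\tau^2$ (which also tends to $s$), summing, and evaluating at $T_\tau$ yields $\tilde v_\tau\to\tilde v^0_{s,\omega^-_z(s)+\omega^+_z(s)}(T)=\tilde w^0_{s,z}(T)$, completing the generator step and hence the lemma.

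To prove the claim: by $(A_1)$, $(A_4)$ and the $C^2$ regularity of $f,f^0$ the coefficient matrices are uniformly bounded, so by Gr\"onwall the $\tilde v^\tau_{s_\tau,\xi_\tau}(\cdot)$ are uniformly bounded and equi-Lipschitz on $[0,b]$; Arzel\`a--Ascoli extracts a uniformly convergent subsequence with limit $\bar v(\cdot)$. Writing \eqref{dynVariation} in integral form and using the equi-Lipschitz bound to replace $\tilde v^\tau(r-\tau^1)$ by $\tilde v^\tau(r)+O(\tau^1)$, the equation collapses, as $\tau^1\to 0$, to one with effective coefficient $A_\tau:=\tfrac{\partial\tilde f}{\partial x}(\cdot)+\tfrac{\partial\tilde f}{\partial y}(\cdot)$ along the delayed data. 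Here the two cases diverge: for general delays $A_\tau\to A_0$ strongly in $L^1_{\mathrm{loc}}$, from the a.e.\ convergences $u_\tau\to u$, $u_\tau(\cdot-\tau^2)\to u$, the uniform convergence $x_\tau\to x$, boundedness, and dominated convergence; in the pure-state case one has only $A_\tau\rightharpoonup A_0$ weakly-$*$ in $L^\infty$ from \eqref{derDynConv}. In either case one passes to the limit in $\int_{s_\tau}^t A_\tau\,\tilde v^\tau$ by splitting $A_\tau\,\tilde v^\tau=A_\tau(\tilde v^\tau-\bar v)+A_\tau\,\bar v$: the first term vanishes because $A_\tau$ is bounded and $\tilde v^\tau-\bar v\to 0$ uniformly, and the second tends to $\int_s^t A_0\,\bar v$ by weak-$*$ convergence tested against the fixed $L^1$ function $\mathds{1}_{[s,t]}\bar v$ (together with $s_\tau\to s$). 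Hence $\bar v$ solves the limit equation with $\bar v(s)=\xi$, so $\bar v=\tilde v^0_{s,\xi}$ by uniqueness, and independence of the subsequence forces convergence of the whole family. The genuinely delicate point — the main obstacle — is exactly this identification in the pure-state case: only weak-$*$ information on the coefficients is available, so one cannot pass to the limit naively in the product $A_\tau(\cdot)\,\tilde v^\tau(\cdot)$, and must first upgrade the convergence of the $\tilde v^\tau$ themselves to uniform via the a priori Gr\"onwall/Arzel\`a--Ascoli bounds before the weak-$*$ limit can be invoked; the collapse of the delayed linear ODE onto a non-delayed one as $\tau^1\to0$, and the varying time intervals $[0,T_\tau]$, are handled by the same equi-Lipschitz estimate and the continuous extensions.
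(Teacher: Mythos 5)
Your proposal is correct and follows essentially the same route as the paper: reduce to the generators $\tilde w^0_{s,z}(T)$, use Lemma \ref{lemma1} to produce the approximating Lebesgue points $s_\tau$, check convergence of the jump vectors $\omega^-_z(s_\tau)+\omega^+_z(s_\tau)$ via the separated form $f=f_1+f_2$, and pass to the limit in the variational system \eqref{dynVariation} using the weak-$*$ (resp.\ strong $L^1_{\mathrm{loc}}$) convergence of its coefficients. The only difference is that you spell out in full the two steps the paper leaves implicit --- the Gr\"onwall/Arzel\`a--Ascoli argument behind ``continuous dependence w.r.t.\ initial data'' under merely weak-$*$ convergent coefficients, and the diagonal argument extending the result from generators to the whole closed convex cone --- and both of these elaborations are sound.
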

\begin{proof}[Proof of Lemma \ref{prop1}] We prove the statement for problems (\textbf{OCP})$_{\tau}$ with general delays $\tau = (\tau^1,\tau^2)$. If pure state delays $\tau = (\tau^1,0)$ are considered, the same guideline can be used by exploiting Lemma \ref{lemma1} and (\ref{derDynConv}).

Suppose first that $\tilde v = \tilde w^0_{s,z}(T)$, where $z \in \Omega$ and $0<s<T$ is a Lebesgue point of $u(\cdot)$. By definition, $\tilde w^0_{s,z}(\cdot)$ is the solution of
\begin{eqnarray} \label{sysProp1}
\begin{cases}
\dot{\psi}(t) = \displaystyle \bigg( \frac{\partial \tilde f}{\partial x}(t,x(t),x(t),u(t),u(t)) + \frac{\partial \tilde f}{\partial y}(t,x(t),x(t),u(t),u(t)) \bigg) \psi(t) \medskip \\
\psi(s) = \tilde f(s,x(s),x(s),z,z) - \tilde f(s,x(s),x(s),u(s),u(s))
\end{cases}
\end{eqnarray}

From Lemma \ref{lemma1}, there exists a family $(s_{\tau})_{\tau \in (0,\tau_0)^2} \subseteq [s,T)$, which are Lebesgue points of $u_{\tau}(\cdot)$, $u_{\tau}(\cdot-\tau^2)$ and of $u_{\tau}(\cdot+\tau^2)$, such that $u_{\tau}(s_{\tau}) \rightarrow u(s)$, $u_{\tau}(s_{\tau}-\tau^2) \rightarrow u(s)$, $u_{\tau}(s_{\tau}+\tau^2) \rightarrow u(s)$ and $s_{\tau} \rightarrow s$ as $\tau \rightarrow 0$. Then, we are able to consider $\tilde v^{\tau}_{s_{\tau},\omega^-_z(s_{\tau})}(\cdot)$ and $\tilde v^{\tau}_{s_{\tau}+\tau^2,\omega^+_z(s_{\tau})}(\cdot)$, solutions of (\ref{dynVariation}) with initial data provided respectively by (\ref{omegaMinus}) and (\ref{omegaPlus}), and we denote $\tilde w^{\tau}_{s_{\tau},z}(t) = \tilde v^{\tau}_{s_{\tau},\omega^-_z(s_{\tau})}(t) + \tilde v^{\tau}_{s_{\tau}+\tau^2,\omega^+_z(s_{\tau})}(t)$. Using the particular form of the dynamics and the cost of (\textbf{OCP})$_{\tau}$, Lemma \ref{lemma1} gives $\lim_{\tau \rightarrow 0} \Big( \omega^-_z(s_{\tau}) + \omega^+_z(s_{\tau}) \Big) = \tilde f(s,x(s),x(s),z,z) - \tilde f(s,x(s),x(s),u(s),u(s))$. Moreover, from the previous sections, $\frac{\partial \tilde f}{\partial x}(\cdot,x_{\tau}(\cdot),x_{\tau}(\cdot-\tau^1),u_{\tau}(\cdot),u_{\tau}(\cdot-\tau^2))$ and $\frac{\partial \tilde f}{\partial y}(\cdot,x_{\tau}(\cdot),x_{\tau}(\cdot-\tau^1),u_{\tau}(\cdot),u_{\tau}(\cdot-\tau^2))$ tend respectively to $\frac{\partial \tilde f}{\partial x}(\cdot,x(\cdot),x(\cdot),u(\cdot),u(\cdot))$ and $\frac{\partial \tilde f}{\partial y}(\cdot,x(\cdot),x(\cdot),u(\cdot),u(\cdot))$ for the weak star topology of $L^{\infty}$, as $\tau \rightarrow 0$. By the continuous dependence w.r.t. intial data and since $T_{\tau}$ converges to $T$, it follows immediately that $\tilde v_{\tau} := \tilde w^{\tau}_{s_{\tau},z}(T_{\tau}) \in \tilde K^{\tau}(T_{\tau})$ converges to $\tilde v$ as $\tau$ tends to 0. The extension to general $\tilde v \in \tilde K^0(T)$ is then straightforward.
\end{proof}

For the last part of the proof, an iterative use of Lemma \ref{prop1} is done. In this context, a distinction between problems with pure state delays and problems with general delays is not needed anymore because the concerned proofs are the same. Then, we focus only on (\textbf{OCP})$_{\tau}$ with general delays $\tau = (\tau^1,\tau^2)$. Assumptions $(B_1)$ and $(C_1)$ are always implicitly assumed. \\

We need first to prove that, considering if necessary a smaller $\tau_0 > 0$, for every $\tau = (\tau^1,\tau^2) \in (0,\tau_0)^2$, every extremal lift $(x_{\tau}(\cdot),p_{\tau}(\cdot),p^0_{\tau},u_{\tau}(\cdot))$ of any solution of (\textbf{OCP})$_{\tau}$ is normal. Moreover, the set $\{ p_{\tau}(T_{\tau}) : \tau \in (0,\tau_0)^2 \}$ is bounded.

We consider the first statement proceeding by contradiction. Assume that, for every integer $k$, there exist $\tau_k = (\tau^1_k,\tau^2_k) \in (0,1/k)^2$ and a solution $(x_{\tau_k}(\cdot),u_{\tau_k}(\cdot))$ of (\textbf{OCP})$_{\tau_k}$ having an abnormal extremal lift $(x_{\tau_k}(\cdot),p_{\tau_k}(\cdot),0,u_{\tau_k}(\cdot))$. Set $\psi_{\tau_k} = p_{\tau_k}(T_{\tau_k})$ for every integer $k$. Then, there holds $\Big\langle (\psi_{\tau_k},0) , \tilde v_{\tau_k} \Big\rangle \leq 0$, for every $\tilde v_{\tau_k} \in \tilde K^{\tau_k}(T_{\tau_k})$, and, since the final time is free, we infer that $\langle \psi_{\tau_k} , f(T_{\tau_k},x_{\tau_k}(T_{\tau_k}),x_{\tau_k}(T_{\tau_k} - \tau^1_k),u_{\tau_k}(T_{\tau_k}),u_{\tau_k}(T_{\tau_k} - \tau^2_k)) \rangle = 0$, for every integer $k$. Since the final adjoint vector $(p_{\tau_k}(\cdot),p^0_{\tau_k}=0)$ is defined up to a multiplicative scalar, we assume that $\psi_{\tau_k}$ is a unit vector for every integer $k$. Then, up to a subsequence, the sequence $(\psi_{\tau_k})_{k \in \mathbb{N}}$ converges to some unit vector $\psi \in \mathbb{R}^n$. Using the previous results, passing to the limit we infer that $\Big\langle (\psi,0) , \tilde v \Big\rangle \leq 0$ for every $\tilde v \in \tilde K^0(T)$ and that $\langle \psi , f(T,x(T),x(T),u(T),u(T)) \rangle = 0$. It then follows that $(x(\cdot),u(\cdot))$ has an abnormal extremal lift. This contradicts Assumption $(A_3)$.

For the second statement, again by contradiction, assume that there exists a sequence $(\tau_k = (\tau^1_k,\tau^2_k))_{k \in \mathbb{N}} \subseteq (0,\tau_0)^2$ converging to 0 such that $\| p_{\tau_k}(T_{\tau_k}) \|$ tends to $+\infty$. Since the sequence $\left( \frac{p_{\tau_k}(T_{\tau_k})}{\| p_{\tau_k}(T_{\tau_k}) \|} \right)_{k \in \mathbb{N}}$ is bounded in $\mathbb{R}^n$, up to some subsequence, it converges to some unit vector $\psi$. By the construction of the adjoint vector, we have $\Big\langle (p_{\tau_k}(T_{\tau_k}),-1) , \tilde v_{\tau_k} \Big\rangle \leq 0$, for every $\tilde v_{\tau_k} \in \tilde K^{\tau_k}(T_{\tau_k})$, and
\begingroup
$$
\langle p_{\tau_k}(T_{\tau_k}) , f(T_{\tau_k},x_{\tau_k}(T_{\tau_k}),x_{\tau_k}(T_{\tau_k} - \tau^1_k),u_{\tau_k}(T_{\tau_k}),u_{\tau_k}(T_{\tau_k} - \tau^2_k)) \rangle
$$
$$
- f^0(T_{\tau_k},x_{\tau_k}(T_{\tau_k}),x_{\tau_k}(T_{\tau_k} - \tau^1_k),u_{\tau_k}(T_{\tau_k}),u_{\tau_k}(T_{\tau_k} - \tau^2_k)) = 0
$$
\endgroup
for every integer $k$. Dividing by $\| p_{\tau_k}(T_{\tau_k}) \|$ and passing to the limit, thanks to the previous results, it follows that the solution $(x(\cdot),u(\cdot))$ has an abnormal extremal lift, which again contradicts Assumption $(A_3)$. \\

Now, let $\psi$ be a closure point of $\{ p_{\tau}(T_{\tau}) : \tau \in (0,\tau_0)^2 \}$ and $(\tau_k = (\tau^1_k,\tau^2_k))_{k \in \mathbb{N}} \subseteq (0,\tau_0)^2$ a sequence converging to 0 such that $p_{\tau_k}(T_{\tau_k})$ tends to $\psi$.  Using the continuous dependence w.r.t. initial data and the convergence properties established, we infer that the sequence $(p_{\tau_k}(\cdot))_{k \in \mathbb{N}}$ converges uniformly to the solution $z(\cdot)$ of the Cauchy problem
\begingroup
\small
$$
\begin{cases}
\displaystyle \dot{z}(t) = -\frac{\partial H}{\partial x}(t,x(t),x(t),z(t),-1,u(t),u(t)) - \frac{\partial H}{\partial y}(t,x(t),x(t),z(t),-1,u(t),u(t)) \medskip \\
z(T) = \psi
\end{cases}
$$
\endgroup
Moreover, since $\Big\langle (p_{\tau_k}(T_{\tau_k}),-1) , \tilde v_{\tau_k} \Big\rangle \leq 0$, for every $\tilde v_{\tau_k} \in \tilde K^{\tau_k}(T_{\tau_k})$ and
\begingroup
$$
\langle p_{\tau_k}(T_{\tau_k}) , f(T_{\tau_k},x_{\tau_k}(T_{\tau_k}),x_{\tau_k}(T_{\tau_k} - \tau^1_k),u_{\tau_k}(T_{\tau_k}),u_{\tau_k}(T_{\tau_k} - \tau^2_k)) \rangle
$$
$$
- f^0(T_{\tau_k},x_{\tau_k}(T_{\tau_k}),x_{\tau_k}(T_{\tau_k} - \tau^1_k),u_{\tau_k}(T_{\tau_k}),u_{\tau_k}(T_{\tau_k} - \tau^2_k)) = 0
$$
\endgroup
for every integer $k$, passing to the limit, thanks to the previous results, we obtain $\Big\langle (\psi,-1) , \tilde v \Big\rangle \leq 0$, for every $\tilde v \in \tilde K^0(T)$ and $\langle \psi , f(T,x(T),x(T),u(T),u(T)) \rangle - f^0(T,x(T),x(T),u(T),u(T)) = 0$. It follows that $(x(\cdot),z(\cdot),-1,u(\cdot))$ is a normal extremal lift of (\textbf{OCP}). By $(A_3)$, we obtain $z(\cdot) = p(\cdot)$ in $[0,T]$, giving the conclusion.

\section{Proof of Lemma \ref{iFTP}} \label{appConic}

We start by recalling the following standard result (see \cite{agrachev2013control} for a proof).
\begin{lemma}
Let $\ell : \mathbb{R}^m \rightarrow \mathbb{R}^n$ be a linear mapping such that $\ell(\mathbb{R}^m_+) = \mathbb{R}^n$. Then,
\begin{itemize}
\item we have $m > n + 1$ and the intersection $(0,+\infty)^m \cap \textnormal{ker } \ell$ is nontrivial;
\item there exists a $n$-dimensional subspace $W \subseteq{\mathbb{R}^m}$ such that $\ell|_W : W \rightarrow \mathbb{R}^n$ is an isomorphism.
\end{itemize}
\end{lemma}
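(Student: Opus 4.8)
The plan is to replace the orthant by its generators: $\mathbb{R}^m_+$ is the convex cone spanned by the canonical basis $e_1,\dots,e_m$, so $\ell(\mathbb{R}^m_+)$ is exactly the convex cone spanned by $\ell(e_1),\dots,\ell(e_m)$, and the hypothesis $\ell(\mathbb{R}^m_+)=\mathbb{R}^n$ says precisely that these $m$ vectors positively span $\mathbb{R}^n$. First I would record the free consequences: since $\ell(\mathbb{R}^m)\supseteq\ell(\mathbb{R}^m_+)=\mathbb{R}^n$, the map $\ell$ is surjective and $\dim\ker\ell=m-n$. The first bullet will then follow from the construction of a strictly positive vector in $\ker\ell$, which simultaneously forces $\ker\ell\neq\{0\}$ and hence $m\geq n+1$; the second bullet follows from an elementary choice of complement.

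The only step carrying any content is producing a vector $w\in(0,+\infty)^m\cap\ker\ell$. The key remark is that for each index $j\in\{1,\dots,m\}$ the vector $-\ell(e_j)$ lies in $\mathbb{R}^n=\ell(\mathbb{R}^m_+)$, so there are scalars $c^{(j)}_1,\dots,c^{(j)}_m\geq0$ with $\sum_{i=1}^m c^{(j)}_i\,\ell(e_i)=-\ell(e_j)$; equivalently $w^{(j)}:=e_j+\sum_{i=1}^m c^{(j)}_i e_i$ satisfies $\ell(w^{(j)})=0$, has all components $\geq0$, and has $j$-th component $1+c^{(j)}_j\geq1$. Summing, $w:=\sum_{j=1}^m w^{(j)}$ still lies in the linear subspace $\ker\ell$, and each of its components is $\geq1$ (the $k$-th one being $1+\sum_{j=1}^m c^{(j)}_k$); thus $w\in(0,+\infty)^m\cap\ker\ell$ and $w\neq0$. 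In particular $\ker\ell$ is nontrivial, whence $m-n=\dim\ker\ell\geq1$, i.e. $m\geq n+1$, which establishes the first bullet.

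For the second bullet I would take any linear complement $W$ of $\ker\ell$ in $\mathbb{R}^m$, so $\dim W=m-\dim\ker\ell=n$; then $\ell|_W$ is injective because $W\cap\ker\ell=\{0\}$ and surjective because $\ell(W)=\ell(\ker\ell\oplus W)=\ell(\mathbb{R}^m)=\mathbb{R}^n$, hence an isomorphism. One may even take $W$ to be a coordinate subspace: since $\ell$ is onto, some $n$ of the vectors $\ell(e_1),\dots,\ell(e_m)$, say $\ell(e_{i_1}),\dots,\ell(e_{i_n})$, form a basis of $\mathbb{R}^n$, and $W:=\mathrm{span}(e_{i_1},\dots,e_{i_n})$ works since $\ell|_W$ carries a basis to a basis.

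The whole argument is elementary linear algebra, and the only genuine obstacle is spotting the trick in the second paragraph: membership of each $-\ell(e_j)$ in the image yields a nonnegative kernel vector that is strictly positive in coordinate $j$, and summing these $m$ vectors over $j$ produces a globally strictly positive kernel vector. No convexity beyond the generator description of the orthant, and none of the paper's control-theoretic structure, is used here.
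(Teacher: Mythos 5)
Your proof is correct and complete; note that the paper itself does not prove this lemma but merely cites \cite{agrachev2013control} for it, so there is no in-paper argument to compare against. Your route --- observing that $-\ell(e_j)$ lies in the positively spanned cone, extracting a nonnegative kernel vector $w^{(j)}$ with $j$-th coordinate at least $1$, and summing over $j$ to get a strictly positive element of $\ker\ell$ --- is exactly the standard elementary argument for this fact, and your treatment of the second bullet (any complement of $\ker\ell$, or a coordinate subspace spanned by $e_{i_1},\dots,e_{i_n}$ with $\ell(e_{i_1}),\dots,\ell(e_{i_n})$ a basis) is fine. One point worth flagging: you prove $m\geq n+1$, whereas the lemma as printed asserts the strict inequality $m>n+1$. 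The printed statement is in fact too strong: for $n=1$, $m=2$, $\ell(x_1,x_2)=x_1-x_2$ one has $\ell(\mathbb{R}^2_+)=\mathbb{R}$ and $m=n+1$, with both conclusions of the lemma holding. So the ``$>$'' should read ``$\geq$'' (equivalently $m>n$), and your version is the correct, provable one; this is a typo in the statement rather than a gap in your argument.
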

Applying this lemma to $\ell = \frac{\partial F}{\partial x}(0,0,0)$ yields the existence of a nontrivial vector $v \in (0, +\infty)^m$ such that $\ell(v) = 0$, and the existence of a $n$-dimensional subspace $W \subseteq \mathbb{R}^m$ such that $\ell|_W : W \rightarrow \mathbb{R}^n$ is an isomorphism.

Let $\delta > 0$ small enough such that $v + B_{\delta} = v + W \cap \bar B¯_{\delta}(0) \subseteq (0,+\infty)^m$. The set $U_{\delta} = \ell(B_{\delta})$ is then a closed neighborhood of 0 in $\mathbb{R}^n$. For every $\varepsilon_1$, $\varepsilon_2 \ge 0$, $y, u \in \mathbb{R}^n$, set $\Phi(\varepsilon_1,\varepsilon_2,y,u) = u - F(\varepsilon_1,\varepsilon_2,{\ell|_W}^{-1}(u)) + y$. There holds $\Phi(0,0,0,0) = 0$, and, for almost every $\varepsilon_1$, $\varepsilon_2 \ge 0$, $y, u_1, u_2 \in \mathbb{R}^n$, one has
\begingroup
\footnotesize
$$
\Phi(\varepsilon_1,\varepsilon_2,y,u_1) - \Phi(\varepsilon_1,\varepsilon_2,y,u_2) = u_1 - u_2 + \frac{\partial F}{\partial x}(\varepsilon_1,\varepsilon_2,0).{\ell|_W}^{-1} (u_2 - u_1) + \| u_2 - u_1 \| g(\varepsilon_1,\varepsilon_2,u_1, u_2)
$$
\endgroup
where $g(\varepsilon_1,\varepsilon_2,u_1,u_2) \rightarrow 0$ as soon as $\| u_1 \| + \| u_2 \| \xrightarrow{\text{a.e.}} 0$. Since $\frac{\partial F}{\partial x}$ is approximately continuous w.r.t. $(\varepsilon_1,\varepsilon_2)$, it follows that, up to sets of measure zero, the mapping $\frac{\partial F}{\partial x}(\varepsilon_1,\varepsilon_2,0).{\ell|_W}^{-1}$ is close to the identity for almost every $(\varepsilon_1,\varepsilon_2)$ small enough. Therefore, by continuity, there exist $k \in (0,1)$ and $\varepsilon_0 > 0$ such that, for every $(\varepsilon_1,\varepsilon_2) \in [0,\varepsilon_0)^2$, $y \in \mathbb{R}^n$, the mapping $u \mapsto \Phi(\varepsilon_1,\varepsilon_2,y,u)$ is $k$-Lipschitzian on an open neighborhood of 0.

With the same argument, it is not difficult to show that, if $\delta$, $(\varepsilon_1,\varepsilon_2)$ and $\| y \|$ are small enough, then the mapping $u \mapsto \Phi(\varepsilon_1,\varepsilon_2,y,u)$ maps $U_{\delta}$ into itself. Lemma \ref{iFTP} follows from the application of the usual Banach fixed point theorem to the contraction mapping $u \mapsto \Phi(\varepsilon_1,\varepsilon_2,y,u)$ with parameters $(\varepsilon_1,\varepsilon_2,y)$.

\newpage

\bibliographystyle{unsrt}
\bibliography{references}
\end{document}